 \newcommand{\BC}{{\mathbb {C}}} 
  \newcommand{\BH}{{\mathbb {H}}}
 \newcommand{\BN}{{\mathbb {N}}} 
 \newcommand{\BR}{{\mathbb {R}}} 
 \newcommand{\BZ}{{\mathbb {Z}}}
\newcommand{\CM}{{\mathcal {M}}}
\newcommand{\GL}{{\mathrm {GL}}} 
\newcommand{\SL}{{\mathrm {SL}}}
\newcommand{\U}{{\mathrm{U}}}
 \newcommand{\N}{\mathrm{N}}
\newcommand{\re}{\mathfrak{R}}
\newcommand{\Hom}{\mathrm{Hom}}
\newcommand{\Ad}{\mathrm{Ad}}
\newcommand{\triv}{{\mathbf{1}}}
\newcommand{\us}{\underline{s}}
\def\-{^{-1}}
\def\1{\mathbf{1}}
\def\diag{\mathrm{diag}}
\renewcommand{\Im}{{\mathrm{Im}\,}}
\def\ima{\Im}
\g@addto@macro\normalsize{\setlength\abovedisplayskip{3pt}}
\g@addto@macro\normalsize{\setlength\belowdisplayskip{3pt}}
\newcommand{\delete}[1]{}
\theoremstyle{plain}
\newtheorem{thm}{Theorem}[section] 
\newtheorem{cor}[thm]{Corollary}
\newtheorem*{st*}{Statement}
\newtheorem*{q*}{Question}
\newtheorem{ass}{Assumption}
\newtheorem*{thm*}{Theorem}
\newtheorem*{lm*}{Lemma}
\newtheorem*{cor*}{Corollary}
\newtheorem{conjecture}{Conjecture}
\newtheorem*{concl*}{Conclusion}
\newtheorem*{rem*}{Remark}
\newtheorem{lem}[thm]{Lemma} 
 \newtheorem{prop}[thm]{Proposition}
\newtheorem{df}[thm]{Definition}
\newtheorem{fct}[thm]{Fact}
\newtheorem {rem}[thm]{Remark}
\numberwithin{equation}{section}
\begin{document}

	\title{On completeness of local intertwining periods} 
	\date{\today}
		\author{H. Lu}
	\address{Hengfei Lu. School of Mathematical Sciences, Beihang University, 9 Nansan Street, Shahe Higher Education Park, Changping, Beijing 102206 P.R.C}
	\email{luhengfei@buaa.edu.cn}
	
	\author{N. Matringe}
	\address{Nadir Matringe. Shanghai Institute for Mathematics and Interdisciplinary Sciences, Block A, International Innovation Plaza, No. 657 Songhu Road, Yangpu District, Shanghai, China}
	\email{nadirmatringe@outlook.fr}

	\maketitle
	
\vspace{-0.5cm}	
	
\begin{abstract} 
	In this paper we study the problem of explicitly describing the space of invariant linear forms on induced distinguished representations in terms of invariant linear forms on the inducing representation. More precisely, for certain tempered reductive symmetric pairs $(G,H)$ over a local field of characteristic zero, which we call unimodular in this paper, we study under which condition on the inducing representation, the space of $H$-invariant linear forms on a parabolically induced representation of $G$ is generated by regularized intertwining periods attached to admissible parabolic orbits in $G/H$, as defined in the work of Matringe--Offen--Yang. We conjecture that it is the case when the inducing representation is square-integrable. Under this assumption we actually conjecture that one can replace regularized by normalized intertwining periods. We then verify the conjecture on known examples, and prove it for various pairs where $G$ has semi-simple split rank one. 
\end{abstract}

\vspace{1cm} 

Let $(G,H)$ be a reductive symmetric pair over a local field of characteristic zero, and let $\pi$ be a smooth admissible representation of $G$. When $G$ is a real reductive group, by smooth admissible, we mean smooth admissible of moderate growth, and  Fr\'echet, as in \cite[Chapter 11]{RRG2}. We denote by $\Hom_H(\pi,\BC)$ the space of  $H$-invariant linear forms on $\pi$, which we assume to be moreover continuous in the archimedean case. Understanding when $\Hom_H(\pi,\BC)$ is nonzero, i.e., when $\pi$ is $H$-distinguished, and in such case determining the dimension of 
$\Hom_H(\pi,\BC)$, is a central problem in the part of the local relative Langlands program concerned with symmetric pairs. The most interesting problems are usually stated in terms of the Langlands parameter of $\pi$, and the distinction of $\pi$ as well as the dimension of $\Hom_H(\pi,\BC)$ when $\pi$ is distinguished are also predicted in such terms. We refer for example to \cite{Prel}, \cite{SV}, \cite{Wan}, \cite{GGP} or \cite{BSV} for such predictions. In order to prove conjectures of this type, usually one first treats the case when $\pi$ is cuspidal or square-integrable, most of the time using relative trace formula methods as for example in \cite{WGGP}, \cite{MWGGP}, \cite{FLO}, \cite{BPGalsq}. Inspired by the global works of \cite{JLR} and \cite{LR}, and building on the local works of \cite{BD}, \cite{Carmona-Delorme-H-inv-form-FE-JFA}, \cite{Brylinski-Delorme-H-inv-form-MeroExtension-Invention}, an efficient tool, called local intertwining periods, was introduced in \cite{MatJFA} for certain Galois pairs attached to inner forms of the general linear group, in order to reduce the study of distinction on induced representations to the case of square-integrable or cuspidal representations. A much more general version of local intertwining periods, for pairs $(G,H)$ that we call unimodular in this paper, was then provided by \cite{MO}, and the results of \cite{MO} were further extended in \cite{MOY}.  
Intertwining periods are meromorphic families of invariant linear forms on analytic families of induced representations which can be used to analyze $\Hom_H(\pi,\BC)$ when $\pi$ is parabolically induced, and more generally a quotient of such a representation (see for example \cite{FLO}, \cite{MatZ}, \cite{MatJFA}, and \cite{SX}). They are also useful to establish results on multiplicities, see \cite{FLO} for a major application of this idea, and Theorem \ref{thm sl linear} below for a modest but interesting example. The natural question that we study in this paper, is under which condition on $\pi$ one can hope that $\Hom_H(\pi,\BC)$ can be exhausted by linear combinations of regularized or even normalized intertwining periods, for $\pi$ an induced representation. Whenever it is possible, it has nice applications as explained above, to the study of distinction of quotients of $\pi$ as well as that of multiplicities, but also to the computation of certain signs occurring in functional equations attached to the functionals in $\Hom_H(\pi,\BC)$ (see for example \cite{LM} or \cite{Sign}). 

The paper is organized as follows. In Section \ref{sec prelim}, we first recall generalities from \cite{LR}, \cite{OJNT}, \cite{GO} and \cite{CZlocal}, we define unimodular pairs, and verify that they are tempered, and that they induce unimodular pairs on stable Levi subgroups. In Section \ref{sec desc} we recall the basic results of \cite{MOY} on local intertwining periods, introduce the necessary terminology, and state our main conjecture, which is Conjecture \ref{conj main}. In Section \ref{sec indep} we prove some basic properties of local intertwining periods, which we use in the rest of the paper in specific situations, and which will be useful in later works as well. In Section \ref{sec group case}, we verify Conjecture \ref{conj main} in the group case, by showing that it boils down to a well-known result of Harish-Chandra on the commuting algebra of representations induced from square-integrable ones. In Section \ref{sec unitary}, for $E/F$ a quadratic extension of $p$-adic fields and $\U_n(E/F)$ a unitary subgroup of $\GL_n(E)$, we deduce Conjecture \ref{conj main} for the pair $(\GL_n(E),\U_n(E/F))$ from the results of \cite{FLO} and \cite{BPunitary}; the deduction requires a patient analysis of the results of \cite{FLO}. In Section \ref{sec inner GL}, we prove one part of the conjecture for pairs $(G,H)$ where $G$ is an inner form of $\GL_n$. In Section \ref{sec geom lemma and supp}, for $p$-adic fields, we prove again general results on open intertwining periods and their singularities, which are well-known to experts, and follow from the geometric lemma. Finally in Section \ref{sec small}, we prove some general results on local intertwining periods when $G$ has semi-simple rank one over a $p$-adic field, and apply them to prove Conjecture \ref{conj main} for pairs where $G$ is a special linear group of rank one. 

\tableofcontents

\section{Unimodular and tempered symmetric pairs}\label{sec prelim}

Let $F$ be a local field of characteristic zero field with normalized absolute value $|\ |_F$. Moreover when $F$ is Archimedean we assume that $F=\BR$, which is not a serious restriction: we see complex reductive groups as real reductive groups by a restriction of scalar argument. Let $G=\mathbf{G}(F)$ be an $F$-reductive group, and let $\theta$ be an involution of $G$ defined over $F$. We denote by $A_G$ its split component, i.e. the $F$-points of the connected center of $\mathbf{G}$. For any $A\subseteq G$ we set \[A^\theta=\{a\in A,\ \theta(a)=a\}\] and 
\[A^{\theta,-}=\{a\in A,\ \theta(a)=a^{-1}\}.\]
We set \[H:=G^{\theta},\] and call the pair  $(G,H)$ reductive \textit{symmetric pair}, or simply a symmetric pair. The set \[X=X_G:=G^{\theta,-}\] is called the \textit{symmetric space} attached to $(G,H)$, and it is equipped with the natural action of $G$ by twisted conjugation:
\[g\cdot x:=gx\theta(g)^{-1}.\] The map \[g\in G \to x_g:=g\cdot e\in X\] induces a homeomorphism between $G/H$ and the orbit of $e$ in $X$. For $g\in G$ and $x\in X$, we set $\Ad(g)(x)=gxg^{-1}$. For any $x\in X$, we can twist the involution $\theta$ by $x$:
\[\theta_x :=x\theta x^{-1}.\] We will mostly be interested by the twists by elements $x_g$ in $G\cdot e$, in which case 
\[\theta_{g}:=\theta_{x_g}=\Ad(g)\circ \theta  \circ \Ad(g)^{-1}.\] We observe that for $g$ in $G$, $A\subseteq G$, and $x\in X$: 
\[A^{\theta_g}=A\cap gHg^{-1},\] and \[\theta_{g\cdot x}=(\theta_x)_g.\] 
In particular \[G^{\theta_g}=gHg^{-1},\] and if $g'\in G$ then \[\theta_{gg'}=(\theta_{g'})_g.\]  We will freely and tacitly use these observations. 
Let $M$ be a Levi subgroup of $x\in X$ is $M$-\textit{admissible} if $\theta_x(M)=M$. We recall the very useful \cite[Lemma 6.3]{OJNT}. We observe that Offen works with parabolic subgroups standard with respect to a fixed  $\theta$-stable maximal split torus $T_0$ of $G$, which exists thanks to \cite[Lemma 2.4]{HW}, and $P_0$ a minimal (and not necessarily $\theta$-stable) parabolic subgroup containing it. Here we remove "standard" assumption.

\begin{lem}\label{lem stable decomp}
Suppose that $x\in X$ is $M$-admissible, and let $P$ be a parabolic subgroup of $G$ with $M$ as a Levi component, and $V$ as unipotent radical. Then 
\[P^{\theta_x}=M^{\theta_x}V^{\theta_x}.\]
\end{lem}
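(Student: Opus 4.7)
The inclusion $M^{\theta_x}V^{\theta_x}\subseteq P^{\theta_x}$ is immediate, so the content of the lemma is the reverse inclusion. My starting point is to take $p\in P^{\theta_x}$ and write it via the Levi decomposition $P = M\ltimes V$ as $p = mv$ with $m\in M$ and $v\in V$. Applying $\theta_x$ and using $\theta_x(M) = M$, one obtains a second factorization $p = \theta_x(m)\theta_x(v)$ with $\theta_x(m)\in M$ and $\theta_x(v)\in\theta_x(V)$, from which
\[
m^{-1}\theta_x(m)\;=\;v\,\theta_x(v)^{-1}\;\in\; M\,\cap\, V\!\cdot\!\theta_x(V).
\]
Hence everything reduces to the claim that this intersection is trivial, for then $m\in M^{\theta_x}$ and $v\in V^{\theta_x}$ as wanted.

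To prove that $M\cap V\cdot\theta_x(V) = \{e\}$, I would exploit the $F$-split center $A_M$, which is preserved by $\theta_x$ because $\theta_x$ is an $F$-automorphism fixing $M$ setwise. Consequently $A_M$ normalizes $V$ (as $A_M\subseteq M\subseteq P$) and also $\theta_x(V)$, the latter being the unipotent radical of the parabolic $\theta_x(P)$ with Levi $M$. I would choose a cocharacter $\lambda\in X_*(A_M)$ defined over $F$ and strictly positive on the set of relative roots $\Phi(A_M, V)$ appearing in $V$; such a $\lambda$ exists by the $F$-splitness of $A_M$ and the fact that $\Phi(A_M,V)$ cuts out a non-empty open cone in $X_*(A_M)\otimes\mathbb{R}$. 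Then for any $v'\in V$, the $A_M$-weight decomposition forces $\lambda(t)v'\lambda(t)^{-1}\to e$ in $V$ as $|t|\to 0$ in $F^\times$. Given $u = v'w'\in M\cap V\cdot\theta_x(V)$ with $v'\in V$ and $w'\in\theta_x(V)$, the centrality of $A_M$ in $M$ yields the identity
\[
u \;=\; \lambda(t)\,u\,\lambda(t)^{-1} \;=\; \bigl(\lambda(t)v'\lambda(t)^{-1}\bigr)\bigl(\lambda(t)w'\lambda(t)^{-1}\bigr),
\]
and since the first factor tends to $e$, I deduce $\lambda(t)w'\lambda(t)^{-1} \to u$. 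As $\theta_x(V)$ is closed in $G$, this forces $u\in\theta_x(V)$, whence $u\in M\cap\theta_x(V) = \{e\}$, the last equality being a standard consequence of the weight decomposition: the unipotent radical of a parabolic with Levi $M$ has no non-trivial $A_M$-fixed points, while everything in $M$ is centralized by $A_M$.

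The main subtlety I would need to address carefully is the existence of an $F$-rational cocharacter $\lambda$ with the prescribed positivity and the convergence argument at the level of $F$-points; both are standard, and the argument works uniformly over all local fields of characteristic zero. Importantly, this approach does not require the stronger condition $\theta_x(P) = P$, only the weaker $M$-admissibility of $x$ given in the hypothesis, which is exactly what makes Lemma \ref{lem stable decomp} usable in the settings considered later in the paper.
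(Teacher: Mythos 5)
Your argument is correct. The reduction of $P^{\theta_x}\subseteq M^{\theta_x}V^{\theta_x}$ to the claim $M\cap V\cdot\theta_x(V)=\{e\}$ is sound (from $mv=\theta_x(m)\theta_x(v)$ one indeed gets $m^{-1}\theta_x(m)=v\theta_x(v)^{-1}$ in that intersection), and the contraction argument proving the claim is valid: $A_M$ normalizes both $V$ and $\theta_x(V)$ (the latter because $M=\theta_x(M)$ is a Levi of $\theta_x(P)$), a cocharacter $\lambda$ of $A_M$ strictly positive on $\Phi(A_M,V)$ exists by the dynamical description of $P$, and the limit lands in the closed subgroup $\theta_x(V)$, whence in $M\cap\theta_x(V)=\{e\}$. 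This is, however, a genuinely different route from the paper, whose ``proof'' is essentially a citation: it invokes \cite[Lemma 6.3]{OJNT}, using \cite[Lemma 2.4]{HW} only to produce a $\theta_x$-stable maximal split torus so as to place $(P,M)$ in the standard setting required there. Your version is self-contained, works over any local field of characteristic zero, and makes explicit that only $\theta_x(M)=M$ (not $\theta_x(P)=P$) is used, which is the point of the lemma. For what it is worth, there is a slightly shorter variant of your step: since $P^{\theta_x}\subseteq P\cap\theta_x(P)=M\ltimes\bigl(V\cap\theta_x(V)\bigr)$ and $\theta_x$ preserves both factors of this semidirect product, uniqueness of the decomposition $p=mu$ immediately yields $m\in M^{\theta_x}$ and $u\in V^{\theta_x}$; this replaces your triviality-of-intersection lemma by the standard description of the intersection of two parabolics with a common Levi, which is itself usually proved by exactly the kind of $A_M$-weight argument you give.
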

\begin{proof}
By \cite[Lemma 2.4]{HW}, and since $M$ is stable under $\theta_x$, 
there exists a $\theta_x$-stable maximally split torus $T_x$ inside $M$. By usual properties of the spherical building, we can fix $P_x$ a minimal parabolic subgroup of $G$ contained in $P$ and containing $T_x$. It now suffices to apply 
\cite[Lemma 6.3]{OJNT} with respect to  $\theta_x$, $T_x$, $P_x$ and the $M$-admissible point $e$ of $X$, in place of $\theta$, $T_0$, $P_0$ and $x$. 
\end{proof}

A simple situation where the above lemma applies is when $\theta_x(P)$ is opposite to  $P$ with respect to $M$, i.e. $\theta_x(P)\cap P=M$. In this case we say that $P$ is \textit{$\theta_x$-split} with respect to $M$, and we observe that  $V^{\theta_x}=\{e\}$.

\begin{lem}\label{lm adm open vs split}
Let $P=MV$ be a parabolic subgroup of $G$, and let $u\in G$. The following are equivalent:
\begin{enumerate}
	\item $P$ is $\theta_u$-split with respect to $M$.
	\item $PuH$ is open and $x_u$ is $M$-admissible.
\end{enumerate} 
\end{lem}
\begin{proof}
One direction is follows from \cite[Proposition 13.4]{HW}. The other as well but less directly. Assume that $x_u$ is $M$-admissible and that $PuH$ is open. We may assume that $u=e$. Let $\mathfrak{g}$ be the Lie algebra of $G$, $\mathfrak{p}$ that of $P$, $\mathfrak{m}$ that of $M$, $\mathfrak{v}$ that of $V$, so $\mathfrak{p}=\mathfrak{m}+\mathfrak{v}$. By abuse of notation we denote by $\theta$ again the differential of $\theta$ at $e$. Since  $\mathfrak{m}$ is $\theta$-stable, we also have $\theta(\mathfrak{p})=\mathfrak{m}+\theta(\mathfrak{v})$. Now since $PH$ is open, it follows from \cite[Proposition 13.4]{HW} that  $\mathfrak{g}=\mathfrak{p}+\theta(\mathfrak{p})=\mathfrak{v}+\mathfrak{m}+\theta(\mathfrak{v})$. But then for dimension reasons the sum must be direct, hence $\mathfrak{p}\cap \theta(\mathfrak{p})=\mathfrak{m}$.
\end{proof}

In \cite[Section 6]{OJNT} and following \cite{LR}, and after fixing a $\theta$-stable maximal split torus $T_0$ of $G$, and $P_0$ a minimal parabolic subgroup containing it as we just explained, Offen introduced an oriented graph, the vertices of which are the couples $(M,x)$ such that $M$ is a standard Levi subgroup of $G$, and $x\in X$ is $M$-admissible. We denote this graph by $\Gamma_G(\theta,T_0,P_0)$. Now we observe that the definition of $\Gamma_G(\theta,T_0,P_0)$ does not require that $T_0$ is $\theta$-stable, and that \cite[Lemma 6.4]{OJNT} still holds for 
$\Gamma_G(\theta,T_0,P_0)$ for a random choice of $T_0$. All that matters is that if $(M,x)$ is a vertex of $\Gamma_G(\theta,T_0,P_0)$, then $\theta_x(M)=M$ and hence $\theta_x$ acts on the roots of $A_M$ in $G$. We then define the graph $\Gamma_G(\theta)$ to be the oriented graph given by the disjoint union of the oriented graphs $\Gamma_G(\theta,T_0,P_0)$, for $T_0$ a maximal split torus of $G$, and $P_0$ a minimal parabolic subgroup of $G$ containing it. In particular a vertex could be labeled by a given pair $(M,x)$ such that $x$ is $M$-admissible more than one time, but it does not matter. 

\begin{df}
We call a pair $(M,x)$ such that $x$ is $M$-admissible a vertex (of $\Gamma_G(\theta)$). If $M$ is fixed, we sometimes simply say that $x$ is a vertex. 
\end{df}

Now we define the notion of \textit{unimodularity} for an $M$-admissible elements $x\in X$, which is referred to as the \textit{modulus assumption} in \cite{MOY}. 

\begin{df}
We say that the vertex $(M,x)$ is unimodular with respect to a parabolic subgroup $P$ of $G$ with Levi component $M$, if 
\begin{equation}\label{eqMA} \delta_{P^{\theta_x}}=(\delta_{P}^{1/2})_{|P^{\theta_x}}.\end{equation}
\end{df}

\begin{fct}\label{fact vertex 1}
Let $(M,x)$ be a vertex. Then to determine whether $x$ is unimodular with respect to $P$, it is enough to check that Equation \eqref{eqMA} holds on $M^{\theta_x}$. Moreover, if $m\in M$, then $(M,x)$ is unimodular with respect to $P$ if and only if $(M,m\cdot x)$ is unimodular with respect to $P$.  
\end{fct}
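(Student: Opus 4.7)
The plan is to leverage Lemma \ref{lem stable decomp} for the first claim and the identity $\theta_{m\cdot x}=(\theta_x)_m$ for the second; both parts are essentially bookkeeping about modular functions once those two structural inputs are in place.

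For the first assertion, Lemma \ref{lem stable decomp} gives $P^{\theta_x}=M^{\theta_x}V^{\theta_x}$. Since $V^{\theta_x}=V\cap G^{\theta_x}$ is the intersection of $G^{\theta_x}$ with the normal subgroup $V$ of $P$, it is normal in $P^{\theta_x}$, making this a semidirect product decomposition. The key observation is that both sides of \eqref{eqMA} are trivial on $V^{\theta_x}$: the character $\delta_P$ is trivial on the unipotent radical $V$, and by the semidirect product formula $\delta_{P^{\theta_x}}$ restricts on $V^{\theta_x}$ to $\delta_{V^{\theta_x}}\equiv 1$ (using that $M^{\theta_x}$ is a reductive group, hence unimodular, and that $V^{\theta_x}$ is a closed unipotent subgroup of $V$). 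Hence \eqref{eqMA} holds on all of $P^{\theta_x}$ if and only if it holds on $M^{\theta_x}$.

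For the second assertion, the formula $\theta_{m\cdot x}=(\theta_x)_m$ gives $G^{\theta_{m\cdot x}}=mG^{\theta_x}m^{-1}$; since $m\in M\subseteq P$ normalizes both $P$ and $M$, this yields $P^{\theta_{m\cdot x}}=mP^{\theta_x}m^{-1}$ and $M^{\theta_{m\cdot x}}=mM^{\theta_x}m^{-1}$. Modular functions transform under inner automorphism as $\delta_{mP^{\theta_x}m^{-1}}(mym^{-1})=\delta_{P^{\theta_x}}(y)$, while $\delta_P$, being an abelian-valued character of $P$, is invariant under conjugation by $m$. Substituting $y\mapsto mym^{-1}$ in the equation \eqref{eqMA} written for $(M,m\cdot x)$ therefore reduces it verbatim to the equation for $(M,x)$, giving the claimed equivalence.

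I do not anticipate any real obstacle. The only minor point requiring attention is that $V^{\theta_x}$, while not necessarily of the form $\mathbf{V}^{\theta_x}(F)$ for a connected unipotent algebraic subgroup, is nonetheless a closed subgroup of the unipotent group $V$, hence unimodular; this is what legitimizes the semidirect-product computation on the inside of $P^{\theta_x}$.
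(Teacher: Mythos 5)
Your proof is correct and follows exactly the paper's route: the first assertion via the decomposition $P^{\theta_x}=M^{\theta_x}V^{\theta_x}$ of Lemma \ref{lem stable decomp} (both sides of \eqref{eqMA} being trivial on $V^{\theta_x}$), and the second via $P^{\theta_{m\cdot x}}=mP^{\theta_x}m^{-1}$ together with the conjugation behaviour of modular characters. The paper's own proof is just a two-line pointer to these same two facts, so your write-up is simply a fleshed-out version of it.
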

\begin{proof}
The first assertion is a consequence of Lemma \ref{lem stable decomp}. The second assertion easily follows from the fact that 
$P^{\theta_{m\cdot x}}=mP^{\theta_x}m^{-1}$. 
\end{proof}

\begin{df}
\begin{enumerate}
\item Suppose that $T_0\subseteq P_0$ is fixed. We say that $\Gamma_G(\theta,T_0,P_0)$ is unimodular if whenever $(M,x)$ is a vertex of $\Gamma_G(\theta,T_0,P_0)$, it is unimodular with respect to the unique parabolic subgroup $P$ of $G$ standard with respect to $P_0$ 
with Levi component $M$. 
\item We say that $\Gamma_G(\theta)$ is unimodular if for any vertex $(M,x)$ of $\Gamma_G(\theta)$ and any parabolic subgroup $P$ having $M$ as a Levi component, then $(M,x)$ is unimodular with respect to $P$. 
\end{enumerate}
\end{df}

\begin{fct}\label{fact vertex 2}
Suppose that $\Gamma_G(\theta,T_0,P_0)$ is unimodular for fixed $T_0\subseteq P_0$. Then $\Gamma_G(\theta)$ is unimodular. 
\end{fct}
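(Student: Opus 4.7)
The plan is to reduce the general case to the hypothesis by conjugation, using that $G$ acts transitively on pairs $(T, P)$ consisting of a maximal split torus $T$ inside a minimal parabolic $P$.

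Given an arbitrary vertex $(M,x)$ of $\Gamma_G(\theta)$ and an arbitrary parabolic $P$ with Levi component $M$, I would first produce a datum $(T_0'',P_0'')$ adapted to $(M,P)$: a maximal split torus $T_0''\subseteq M$ together with a minimal parabolic $P_0''\subseteq P$ containing $T_0''$. This is standard. Any maximal split torus $T_0''$ of $M$ is automatically maximal in $G$, since any element of $G$ centralizing $T_0''$ lies in $Z_G(A_M)=M$. To obtain $P_0''$, take a minimal parabolic of $M$ containing $T_0''$ and adjoin the unipotent radical of $P$; this is a parabolic of $G$ inside $P$, and minimality follows since its Levi equals $Z_G(T_0'')$. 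With this datum, $M$ becomes the standard Levi and $P$ becomes the standard parabolic with Levi $M$ with respect to $(T_0'',P_0'')$.

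Next, using transitivity, I would pick $g\in G$ with $gT_0''g^{-1}=T_0$ and $gP_0''g^{-1}=P_0$, and set
\[M'=gMg^{-1},\qquad x'=g\cdot x,\qquad P'=gPg^{-1}.\]
By the identity $\theta_{g\cdot x}=(\theta_x)_g=\Ad(g)\circ\theta_x\circ\Ad(g)^{-1}$ recalled in the preliminaries, $x'$ is $M'$-admissible since $\theta_{x'}(M')=g\theta_x(M)g^{-1}=M'$. Therefore $(M',x')$ is a vertex of $\Gamma_G(\theta,T_0,P_0)$, and $P'$ is the standard parabolic with Levi $M'$ in this datum. The hypothesis then yields that $(M',x')$ is unimodular with respect to $P'$.

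Finally, it remains to transport the unimodularity condition \eqref{eqMA} back through $\Ad(g)\colon P\to P'$. The same identity $\theta_{g\cdot x}=\Ad(g)\circ\theta_x\circ\Ad(g)^{-1}$ yields $\Ad(g)(P^{\theta_x})=(P')^{\theta_{x'}}$, and the modulus characters pull back as $\delta_{P'}\circ\Ad(g)=\delta_P$ and $\delta_{(P')^{\theta_{x'}}}\circ\Ad(g)=\delta_{P^{\theta_x}}$, since the modulus is preserved by any topological group isomorphism. Hence \eqref{eqMA} for $(M',x',P')$ is equivalent to \eqref{eqMA} for $(M,x,P)$, which proves the conclusion. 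The only mildly non-trivial step is producing the adapted datum $(T_0'',P_0'')$; after that, the argument is essentially a formal transport along an inner automorphism.
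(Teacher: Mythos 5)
Your proof is correct and follows essentially the same route as the paper's: conjugate the pair $(M,P)$ into standard position with respect to $(T_0,P_0)$, use $\theta_{g\cdot x}=(\theta_x)_g$ to see that the conjugated vertex lies in $\Gamma_G(\theta,T_0,P_0)$, and transport the identity \eqref{eqMA} back via $\delta_{gKg^{-1}}=\delta_K\circ\Ad(g^{-1})$. You are somewhat more explicit than the paper in constructing the adapted datum $(T_0'',P_0'')$ guaranteeing that $P$ itself (not just $M$) conjugates to the standard parabolic, which is a worthwhile clarification but not a different argument.
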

\begin{proof}
Let $(M,x)$ be a vertex of $\Gamma_G(\theta)$. Then there exists $g\in G$ such that $gMg^{-1}$ is standard with respect to $(T_0,P_0)$. From the relation $\theta_{g\cdot x}=(\theta_x)_g$, we deduce that $(gMg^{-1},g\cdot x)$ is a vertex of $\Gamma_G(\theta,T_0,P_0)$. Take $P$ a parabolic subgroup of $G$ containing $M$. Then by assumption we have $\delta_{{(gPg^{-1})}^{\theta_{g\cdot x}}}=(\delta_{gPg^{-1}}^{1/2})_{|({gPg^{-1})}^{\theta_{g\cdot x}}}$, but as ${(gPg^{-1})}^{\theta_{g\cdot x}}=gP^{\theta_x}g^{-1}$, the conlcusion follows
from the relation 
$\delta_{gKg^{-1}}=\delta_K\circ \Ad(g^{-1})$ for any subgroup $K$ of $G$, and any $g\in G$.
\end{proof}

\begin{df}
We say that $(G,H)$ is a unimodular symmetric pair if $\Gamma_G(\theta)$ is unimodular. 
\end{df}

The unimodularity assumption is satisfied in many interesting cases. We refer to \cite{BM} for the terminology of pairs of Galois type, of PTB type, and of diagonal type (the latter corresponding to the so called group case).

\begin{lem}\label{lm ex mod} 
\begin{enumerate}
\item \label{A} If $(G,H)$ is either of diagonal type, of Galois type, of PTB type, or if $H/A_G \cap H$ is compact, then $(G,H)$ is unimodular. 
\item \label{B} Let $(M,x)$ be a vertex. Then it is unimodular with respect to any $\theta_x$-split parabolic subgroup $P$ of $G$ with Levi component $M$, i.e. $\theta_x(P)$ is opposite to $P$ and $M=P\cap \theta_x(P)$.
\item \label{C} More generally if $(M,x)$ is on the same connected component of $\Gamma(\theta)$ as a vertex above, it is unimodular with respect to the standard parabolic subgroup $P$ containing $M$ determined by this connected component.
\end{enumerate}
\end{lem}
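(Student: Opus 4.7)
The plan is as follows.

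For part (\ref{A}), I treat each of the four subcases in turn. When $H/(A_G \cap H)$ is compact, the key observation is that for any vertex $(M,x)$, the group $G^{\theta_x}$ is (conjugate to) $H$, so $P^{\theta_x}$, being closed in $G^{\theta_x}$, is compact modulo $A_G \cap P^{\theta_x}$. Since $A_G$ is central in $G$, both $\delta_{P^{\theta_x}}$ and $\delta_P^{1/2}|_{P^{\theta_x}}$ are trivial on $A_G$, hence factor through the compact quotient and are identically $1$. For the diagonal (group), Galois and PTB cases, I would unravel the explicit structure of $(M^{\theta_x}, P^{\theta_x})$ and, using Fact \ref{fact vertex 1}, reduce the verification of \eqref{eqMA} to a direct computation on $M^{\theta_x}$, which in each case is a well-understood reductive subgroup of $M$ whose modulus coincides with $\delta_P^{1/2}|_{M^{\theta_x}}$ after a short determinant computation on the unipotent radical.

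For part (\ref{B}), since $P$ is $\theta_x$-split we have $V \cap \theta_x(V) = \{e\}$, hence $V^{\theta_x} = \{e\}$, and Lemma \ref{lem stable decomp} gives $P^{\theta_x} = M^{\theta_x}$. The latter is reductive, hence unimodular, so $\delta_{P^{\theta_x}} = 1$. It remains to show $\delta_P|_{M^{\theta_x}} = 1$: for $m \in M^{\theta_x}$, applying $\theta_x$, which fixes $m$ and sends $P$ to its opposite $\bar{P} = \theta_x(P)$, yields
\[\delta_P(m) = \delta_{\theta_x(P)}(\theta_x(m)) = \delta_{\bar{P}}(m) = \delta_P(m)^{-1},\]
where the last equality is the standard identity $\delta_{\bar{P}}|_M = \delta_P^{-1}|_M$ coming from the duality of $\text{Lie}(V)$ and $\text{Lie}(\bar{V})$ as $M$-representations. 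Thus $\delta_P(m)^2 = 1$ and $\delta_P(m) = 1$.

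For part (\ref{C}), I plan to show that unimodularity propagates along edges of $\Gamma_G(\theta)$. Given $(M, x)$ on a connected component containing a vertex of the type treated in part (\ref{B}), the idea is to travel along edges from the $\theta_{x'}$-split vertex, where unimodularity is already established, to $(M, x)$, and to verify that each edge move, as defined in \cite[Section 6]{OJNT}, transforms both sides of \eqref{eqMA} compatibly with the corresponding change of standard parabolic. By Fact \ref{fact vertex 1} it suffices to analyze the Levi part, and the relevant modulus characters on $M^{\theta_x}$ satisfy explicit transformation rules under the combinatorial edge moves, from which the preservation of \eqref{eqMA} follows.

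The main obstacle is part (\ref{C}): it requires a careful case-by-case analysis of the edge moves of $\Gamma_G(\theta)$ following \cite{OJNT}, and a verification that the standard parabolic attached to each connected component is well-defined and interacts correctly with $\theta_x$ at every step of the walk.
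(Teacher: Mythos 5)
Your part (\ref{B}) is correct and complete: the chain $\delta_P(m)=\delta_{\theta_x(P)}(\theta_x(m))=\delta_{\bar P}(m)=\delta_P(m)^{-1}$ is a valid, slightly cleaner character-level version of the paper's argument, which does the same thing at the level of roots of $A_M$ in $\mathrm{Lie}(U_P)$ versus $\mathrm{Lie}(U_{\theta_x(P)})$ after first reducing to $A_M^{\theta_x}$. Your treatment of the compact case of (\ref{A}) is also fine (the paper dismisses it as obvious), modulo the small caveat that $G^{\theta_x}$ is conjugate to $H$ only for $x$ in the $G$-orbit of $e$ — a point the paper glosses over as well.

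The genuine shortfall is that for the Galois and PTB cases of (\ref{A}) and for all of (\ref{C}) you have only a plan, not a proof. The paper settles these by citation: the Galois case is \cite[Proposition 4.3.2]{LR} or \cite[Corollary 6.9]{OJNT}, the PTB case is \cite{Ch} together with \cite[5.3]{BM}, each combined with Fact \ref{fact vertex 2}; these are nontrivial computations, not \say{short determinant computations}, and you would need to actually carry them out or cite them. More importantly, your proposed \say{propagation along edges} for (\ref{C}) is precisely the content of \cite[Lemma 6.4]{OJNT}, which the paper invokes directly (having observed earlier that it extends verbatim to the graph $\Gamma_G(\theta)$). Your strategy for (\ref{C}) is the right one, but you identify it yourself as the main obstacle and leave it undone, so as written the proposal establishes (\ref{B}) and the compact and diagonal cases of (\ref{A}) but not the rest of the lemma.
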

\begin{proof}
The assertion for pairs of diagonal type is easy. For Galois pairs it follows from \cite[Proposition 4.3.2]{LR} or \cite[Corollary 6.9]{OJNT}, and Fact \ref{fact vertex 2}. It follows from \cite{Ch} and \cite[5.3]{BM} and  Fact \ref{fact vertex 2} for pairs of PTB type. For pairs with $H/A_G\cap H$ compact it is obvious. We prove the second point. By assumption $P^{\theta_x}=M^{\theta_x}$ and $M^{\theta_x}$ is reductive hence unimodular. From this we deduce that 
$\delta_{P^{\theta_x}}$ is trivial. We thus want to verify that $\delta_P$ is trivial on $M^{\theta_x}$, or equivalently on $A_M^{\theta_x}$. Now obviously the restriction of $\delta_P$ to $A_M^{\theta_x}$ is ${\theta_x}$-invariant. But on the other hand, if $U_P$ the unipotent radical of $P$, we have the formula \[\delta_P(a)=|\det(\Ad(a))_{|\mathrm{Lie}(U_P)}|_F\] for any $a\in A_M$. Now because $A_M$ is ${\theta_x}$-stable, the involution ${\theta_x}$ induces a bijection between the roots of $A_M$ in $\mathrm{Lie}(U_P)$ and the roots of $A_M$ in $\mathrm{Lie}(U_{{\theta_x}(P)})$. But because $P$ is ${\theta_x}$-split, the roots of $A_M$ in $\mathrm{Lie}(U_{{\theta_x}(P)})$ are the opposite of that of $A_M$ in $\mathrm{Lie}(U_P)$. This imples that \[\delta_P({\theta_x}(a))=\delta_P(a)^{-1}\] for any $a\in A_M$, and so $\delta_P$ is trivial on $A_M^{\theta_x}$. The last assertion follows from \cite[Lamma 6.4]{OJNT}, which as we observed, still holds for the graph $\Gamma_G(\theta)$.
\end{proof}

It is worthwhile observing that unimodular pairs have the nice property of being automatically \textit{tempered}, at least when $F$ is $p$-adic, though this should be true as well when $F$ is Archimedean. We recall that whenever $(G,H)$ is a symmetric pair, there exists a unique right $G$-invariant measure on $H\backslash G$, up to a positive scalar. With respect to such a measure, the space 
of functions $L^2(H\backslash G)$ becomes a unitary representation of $G$, and it admits a unique class of Plancherel measures. 

\begin{df}
We say that $(G,H)$ is a tempered symmetric pair if any Plancherel measure of $L^2(H\backslash G)$ is supported on the set of (isomorphism classes of) tempered representations of $G$.
\end{df}

Here we are following the terminology of \cite[Section 2.7]{BPGalsq}. Comparing \cite[Lemma 2.7.1]{BPGalsq} and \cite[Section 3.2]{CZlocal}, we see that tempered pairs are exactly the pairs called very strongly discrete in \cite{CZlocal}. In particular, by \cite[Section 3.2.2, (19)]{CZlocal}, we deduce the following result.

\begin{prop}\label{prop unimodular implies tempered}
Suppose that $F$ is $p$-adic and that $(G,H)$ is a unimodular symmetric pair. Then it is tempered.  
\end{prop}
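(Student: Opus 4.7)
The plan is to translate between the two vocabularies and then invoke a ready-made criterion. The key observation is that nothing analytic has to be done by hand: the unimodularity assumption is designed precisely so that a combinatorial criterion from \cite{CZlocal} applies.

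First I would identify our notion of tempered symmetric pair with the notion of very strongly discrete spherical variety of \cite{CZlocal}. The definition of tempered in \cite[Section 2.7]{BPGalsq} is a Plancherel-support condition, while \cite[Lemma 2.7.1]{BPGalsq} rephrases it as an $L^{2+\e}$ integrability condition on matrix coefficients along $H\bs G$. That is the same condition used to define very strong discreteness in \cite[Section 3.2]{CZlocal}, so tempered and very strongly discrete are synonymous. This gives a purely combinatorial target: show that $(G,H)$ is very strongly discrete.

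Next I would invoke \cite[Section 3.2.2, (19)]{CZlocal}, which provides a combinatorial criterion for very strong discreteness of a symmetric variety: it requires exactly that for every $M$-admissible $x$ and every parabolic $P$ of $G$ with Levi $M$, one has $\delta_{P^{\theta_x}}=(\delta_P^{1/2})_{|P^{\theta_x}}$. But this is literally the definition of $(M,x)$ being unimodular with respect to $P$, and our standing assumption is that $\Gamma_G(\theta)$ is unimodular, so this holds for every vertex and every parabolic carrying it (using Facts \ref{fact vertex 1} and \ref{fact vertex 2} to pass freely between $M$-orbits of vertices and between conjugate parabolics). We conclude that $(G,H)$ is very strongly discrete, and hence tempered.

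The only delicate point in this plan is making sure that the parametrization of admissible parabolic orbits on $H\bs G$ in \cite{CZlocal} matches the graph $\Gamma_G(\theta)$ used here, so that the modular-character identities in the two sources really do refer to the same objects; this is where I expect the bookkeeping to be subtle. Once this dictionary is set up, the deduction is formal: no new analytic input is required, and the $p$-adic hypothesis enters only because the criterion of \cite{CZlocal} is formulated in that setting (the Archimedean analogue would follow from the same blueprint once the corresponding criterion is available).
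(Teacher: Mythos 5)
Your proposal matches the paper's argument exactly: the paper likewise identifies tempered pairs with the very strongly discrete pairs of \cite{CZlocal} by comparing \cite[Lemma 2.7.1]{BPGalsq} with \cite[Section 3.2]{CZlocal}, and then deduces the proposition directly from the criterion in \cite[Section 3.2.2, (19)]{CZlocal}, which is precisely the unimodularity condition on the vertices. No further argument is given in the paper, so your route and its are the same.
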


We recall that by definition, an irreducible representation of $G$ is called \textit{square-integrable} if it is unitary, and if its matrix coefficients are square-integrable mod the center of $G$. We then remind that tempered pairs are strongly discrete in the terminology of \cite{GO}, as follows from \cite[Theorem 1.1]{GO} and \cite[Proposition 3.3]{CZlocal}, i.e.:

\begin{prop}\label{prop tempered implies strongly discrete}
Suppose that $F$ is $p$-adic and that $(G,H)$ is a tempered symmetric pair. Then any matrix coefficient of a square-integrable representation of $G$ belongs to $L^1(H/A_G\cap H)$. 
\end{prop}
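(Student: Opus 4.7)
The plan is to deduce the proposition directly by combining two results from the literature, already flagged by the authors in the sentence preceding the statement. First, I recall that in the terminology of \cite{GO} a symmetric pair is called \emph{strongly discrete} precisely when every matrix coefficient of a square-integrable representation of $G$ lies in $L^1(H/A_G \cap H)$ — so the target conclusion is literally the assertion that a tempered symmetric pair is strongly discrete in the sense of \cite{GO}. Thus the proof reduces to an implication "tempered $\Longrightarrow$ strongly discrete".

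Next, I would invoke \cite[Proposition 3.3]{CZlocal}, which provides exactly this implication once the terminology has been matched up: namely that "very strongly discrete" (in the sense of \cite[Section 3.2]{CZlocal}) pairs are strongly discrete. The key terminological identification needed is that "tempered" as defined above (following \cite[Section 2.7]{BPGalsq}) coincides with "very strongly discrete" of \cite[Section 3.2]{CZlocal}; this is the content of the comparison between \cite[Lemma 2.7.1]{BPGalsq} and \cite[Section 3.2]{CZlocal} already invoked by the authors just above the statement (and used immediately before in Proposition \ref{prop unimodular implies tempered}).

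Finally, \cite[Theorem 1.1]{GO} is the result guaranteeing that strong discreteness in the operational sense just stated is equivalent to the $L^1$ integrability of matrix coefficients of the square-integrable representations modulo $A_G \cap H$; applying it then yields the conclusion.

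No serious obstacle is expected: the entire argument is a concatenation of an identification of terminology with two quoted results. The only point that deserves some care, and is essentially the sole content to verify, is that the splitting of the central split torus into $A_G \cap H$ and the $\theta$-anti-invariant part is the one used in all three references, so that the space $H/A_G \cap H$ carrying the $L^1$ condition is the same in \cite{GO}, \cite{CZlocal} and \cite{BPGalsq}; this is standard and causes no problem in the $p$-adic setting.
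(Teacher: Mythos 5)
Your proposal is correct and follows essentially the same route as the paper: the authors likewise treat the statement as a pure concatenation of the terminological identification (tempered $=$ very strongly discrete, via the comparison of \cite[Lemma 2.7.1]{BPGalsq} with \cite[Section 3.2]{CZlocal}) with the two quoted results \cite[Theorem 1.1]{GO} and \cite[Proposition 3.3]{CZlocal}, which together show that tempered pairs are strongly discrete in the sense of \cite{GO}, i.e.\ satisfy the stated $L^1$ condition on matrix coefficients of square-integrable representations. Nothing further is needed.
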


Let $(G,H)$ be a tempered symmetric pair, and $\pi$ be a square-integrable representation of $G$. To each $v\in \pi$, and $v^\vee$ in the contragredient $\pi^\vee$ of $\pi$, we can associate the matrix coefficient 
\[c_{v,v^\vee}:g\to \langle \pi(g)v, v^\vee\rangle .\] Then by Proposition \ref{prop tempered implies strongly discrete}, the linear form 
\[\ell_{v^\vee}:v\to \int_{H/A_G\cap H} c_{v,v^\vee}(h)dh\] is a well-defined element in 
$\Hom_H(\pi,\BC).$ We set 
\[\mathcal{H}(\pi)=\{\ell_{v^\vee}, \ v^\vee \in \pi^\vee\}\subseteq \Hom_H(\pi,\BC).\]

We now state \cite[Theorem 1.4]{CZlocal}, which claims that all linear forms in $\Hom_H(\pi,\BC)$ can be canonically expressed in terms of matrix coefficients. 

\begin{thm}\label{thm H-dual discrete temp}
Suppose that $F$ is $p$-adic and that $(G,H)$ is a tempered symmetric pair. Then $\mathcal{H}(\pi)=\Hom_H(\pi,\BC)$. 
\end{thm}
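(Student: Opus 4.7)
The containment $\mathcal H(\pi)\subseteq \Hom_H(\pi,\BC)$ is built into the construction: by Proposition \ref{prop tempered implies strongly discrete}, the defining integral $\ell_{v^\vee}(v)=\int_{H/A_G\cap H} c_{v,v^\vee}(h)\,dh$ converges absolutely for every $v\in\pi$ and $v^\vee\in\pi^\vee$, and the $H$-invariance in $v$ is a change of variable $h\mapsto hh_0$ combined with invariance of the measure on $H/A_G\cap H$. The real content of the theorem is thus the reverse inclusion, which is \cite[Theorem 1.4]{CZlocal}. The plan is to reinterpret both $\pi^\vee$ and $\Hom_H(\pi,\BC)$ as two descriptions of the same multiplicity space for $\pi$ in the Plancherel decomposition of $L^2(H\backslash G/A_G)$.

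First, for each $v^\vee\in\pi^\vee$, the plan is to show that the $G$-equivariant map
\[ M_{v^\vee}:\ v\ \longmapsto\ \bigl(g\mapsto \ell_{v^\vee}(\pi(g)v)\bigr) \]
sends $\pi$ into $L^2(H\backslash G/A_G)$. The required $L^2$-control would follow from the very strong discreteness property of \cite{CZlocal} (equivalent to temperedness by Proposition \ref{prop unimodular implies tempered}), which quantitatively bounds the $L^2$-norm of the $H$-integrated matrix coefficient in terms of $\|c_{v,v^\vee}\|_{L^2(G/A_G)}$. This produces an injection $\pi^\vee\hookrightarrow \Hom_G\bigl(\pi,L^2(H\backslash G/A_G)\bigr)$ whose image, upon evaluation at $e$ on smooth vectors, is exactly $\mathcal H(\pi)$.

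Conversely, to any $\ell\in\Hom_H(\pi,\BC)$ one associates the $G$-equivariant map $T_\ell:v\mapsto (g\mapsto \ell(\pi(g)v))$ into smooth $H$-invariant functions on $G/A_G$. Since $\pi$ is square-integrable, its contribution to the Plancherel decomposition of $L^2(H\backslash G/A_G)$ is discrete and isolated from the continuous spectrum, and by a Frobenius-reciprocity identification
\[ \Hom_H(\pi,\BC)\ \simeq\ \Hom_G\bigl(\pi,L^2(H\backslash G/A_G)\bigr)\]
(given on the smooth vector side by evaluation at $e$), $T_\ell$ must land in $L^2$ and thus correspond to an element of the discrete spectrum. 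Matching the two descriptions of this multiplicity space produces a $v^\vee\in\pi^\vee$ with $\ell=\ell_{v^\vee}$, giving the equality $\mathcal H(\pi)=\Hom_H(\pi,\BC)$.

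The main obstacle is the $L^2$-control and the Frobenius reciprocity above, both of which are highly non-trivial in the symmetric space setting: they rely crucially on the strong discreteness property of \cite{GO,CZlocal}, which rules out that continuous spectrum can contribute to $\Hom_H(\pi,\BC)$ for square-integrable $\pi$. Put differently, the hard step is excluding $H$-invariant functionals produced by generalized matrix coefficients of non-square-integrable representations, which ultimately reduces to the full Plancherel decomposition of $L^2(H\backslash G/A_G)$ in the tempered setting as developed in \cite{CZlocal}.
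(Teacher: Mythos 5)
Your proposal is consistent with the paper, which offers no independent proof of this statement: it is quoted verbatim as \cite[Theorem 1.4]{CZlocal}, with the easy inclusion $\mathcal{H}(\pi)\subseteq\Hom_H(\pi,\BC)$ already guaranteed by Proposition \ref{prop tempered implies strongly discrete}, exactly as you note. Your sketch of the reverse inclusion (identifying $\Hom_H(\pi,\BC)$ with the $L^2$-multiplicity space via the Plancherel decomposition) is a fair outline of the mechanism, but be aware that the ``Frobenius reciprocity'' identification $\Hom_H(\pi,\BC)\simeq\Hom_G(\pi,L^2(H\backslash G/A_G))$ is not an input one can assume --- it is precisely the content of the cited theorem --- so your argument, like the paper's, ultimately defers entirely to \cite{CZlocal}.
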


We will mostly be interested in representations induced from square-integrable ones. Hence the following result is useful.

\begin{prop}\label{prop unimodular and Levi}
Let $(G,H)$ be a symmetric pair, and let $(M,x_g)$ be a vertex of $\Gamma_G(\theta)$ with $g\in G$. If $(G,H)$ is unimodular, then $(M,M^{\theta_g})$ is as well. 
\end{prop}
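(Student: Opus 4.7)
The plan is to verify unimodularity of $(M, M^{\theta_g})$ at an arbitrary vertex $(L,y)$ of $\Gamma_M(\theta_g|_M)$ with respect to an arbitrary parabolic $Q$ of $M$ with Levi $L$, by reducing it to the unimodularity of $(G,H)$ at two vertices of $\Gamma_G(\theta)$ built from $(L,y)$. Setting $z := yx_g \in G$, a direct computation using $\theta(x_g) = x_g^{-1}$ and $\theta_g(y) = y^{-1}$ shows that $z \in X$ and $\theta_z(m) = y\theta_g(m)y^{-1}$ for $m \in M$, i.e.\ $\theta_z|_M = (\theta_g|_M)_y$. Since $y \in M$ and $L$ is $(\theta_g|_M)_y$-stable, both $M$ and $L$ are $\theta_z$-stable, so $(M,z)$ and $(L,z)$ are vertices of $\Gamma_G(\theta)$.

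Fix any parabolic $P = MU$ of $G$ with Levi $M$, and set $R := QU$: this is a parabolic of $G$ with Levi $L$ and unipotent radical $V_R = V_Q U$, where $V_Q$ denotes the unipotent radical of $Q$ in $M$. The vector-space decomposition $\mathrm{Lie}(V_R) = \mathrm{Lie}(V_Q) \oplus \mathrm{Lie}(U)$ is $\Ad(\ell)$-equivariant for $\ell \in L$, yielding the modulus identity $\delta_R(\ell) = \delta_Q(\ell)\delta_P(\ell)$ on $L$. The unimodularity hypothesis, applied at $(L,z)$ and $(M,z)$ with respect to $R$ and $P$ respectively, then gives
\[ \delta_{R^{\theta_z}}(\ell) = \delta_R(\ell)^{1/2} \quad \textrm{and} \quad \delta_{P^{\theta_z}}(\ell) = \delta_P(\ell)^{1/2} \]
for every $\ell \in L^{\theta_z}$.

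The main step, which I expect to be the essential obstacle, is establishing the analogous modulus identity on $\theta_z$-fixed subgroups: $\delta_{R^{\theta_z}}(\ell) = \delta_{Q^{(\theta_g|_M)_y}}(\ell) \cdot \delta_{P^{\theta_z}}(\ell)$ for $\ell \in L^{\theta_z}$. By Lemma \ref{lem stable decomp} the three factors compute the absolute values of the determinants of $\Ad(\ell)$ on $\mathrm{Lie}(V_R^{\theta_z})$, $\mathrm{Lie}(V_Q^{\theta_z})$ and $\mathrm{Lie}(U^{\theta_z})$ respectively, so the identity reduces to the vector-space decomposition $\mathrm{Lie}(V_R)^{\theta_z} = \mathrm{Lie}(V_Q)^{\theta_z} \oplus \mathrm{Lie}(U)^{\theta_z}$. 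To prove this, I will use two properties of $\theta_z$: it preserves $\mathrm{Lie}(M)$ (so $\theta_z(\mathrm{Lie}(V_Q)) \subseteq \mathrm{Lie}(M)$), and it preserves the split connected center $A_M$, hence permutes the $A_M$-root spaces in $\mathrm{Lie}(G)$ bijectively, sending nonzero roots to nonzero roots, so that $\theta_z(\mathrm{Lie}(U)) \subseteq \mathrm{Lie}(U) \oplus \mathrm{Lie}(U^{\mathrm{op}})$, where $U^{\mathrm{op}}$ is the unipotent radical of the parabolic of $G$ opposite to $P$ relative to $M$. For $X = X_Q + X_U \in \mathrm{Lie}(V_R)^{\theta_z}$ with $X_Q \in \mathrm{Lie}(V_Q) \subseteq \mathrm{Lie}(M)$ and $X_U \in \mathrm{Lie}(U)$, matching components of the equation $\theta_z(X) = X$ through the direct sum $\mathrm{Lie}(G) = \mathrm{Lie}(M) \oplus \mathrm{Lie}(U) \oplus \mathrm{Lie}(U^{\mathrm{op}})$ will then force $\theta_z(X_Q) = X_Q$ and $\theta_z(X_U) = X_U$ individually.

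Combining these identities will yield $\delta_{Q^{(\theta_g|_M)_y}}(\ell) = \delta_Q(\ell)^{1/2}$ for all $\ell \in L^{\theta_z}$, and Fact \ref{fact vertex 1} applied in $(M, M^{\theta_g})$ will then deliver the full unimodularity of $(L,y)$ with respect to $Q$, completing the reduction.
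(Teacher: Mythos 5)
Your proof is correct and follows essentially the same route as the paper's: both rest on the modulus identity $\delta_R=\delta_Q\delta_P$ on $L$, its $\theta$-fixed analogue coming from the decomposition of Lemma \ref{lem stable decomp} (whose Lie-algebra form you spell out via the $A_M$-root-space argument, where the paper just calls it an easy consequence), and two applications of the unimodularity of $(G,H)$ at the vertices with Levis $L$ and $M$. The only difference is organizational: the paper first reduces to $g=e$ and phrases the twists as $\theta_m$ for $m\in M$, whereas you absorb $g$ and an arbitrary $y\in X_M$ into the single point $z=yx_g\in X_G$, which is a slightly cleaner and more general bookkeeping of the same argument.
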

\begin{proof}
We observe that $(G,H)$ is unimodular if and only if $(G,gHg^{-1})$ is unimodular thanks to 
the relation $\theta_{g'g}=(\theta_g)_{g'}$ for $g$ and $g'\in G$ (see the proof of Fact \ref{fact vertex 2}). Hence we may assume that $g=e$ is the neutral element of $G$. 
Let $P=MU$ be a parabolic subgroup of $G$ with $M$ as Levi component. Any Levi subgroup $L$ of $M$ is of the form $Q\cap M$, for $Q=LV$ a parabolic subgroup of $G$ containing $P$. Letting $m\in M$, we want to prove that if $\theta_m(L)=L$, then $\delta_{{(Q\cap M)}^{\theta_m}}$ and $\delta_{Q\cap M}^{1/2}$ agree on $L^{\theta_m}$. However we have the following equality $\delta_Q=\delta_P\delta_{Q\cap M}$ on $L$. And 
also $\delta_Q^{1/2}=\delta_{Q^{\theta_m}}$ on $L^{\theta_m}$ and $\delta_P^{1/2}=\delta_{P^{\theta_m}}$ on $M^{\theta_m}$ since $(G,H)$ is unimodular. The first assertion now follows from the equality $\delta_{Q^{\theta_m}}=\delta_{P^{\theta_m}}\delta_{(Q\cap M)^{\theta_m}}$ on 
$L^{\theta_m}$, which is itself an easy consequence of the decomposition  
$Q^{\theta_m}=L^{\theta_m} V^{\theta_m}=L^{\theta_m}(V\cap M)^{\theta_m}U^{\theta_m}$ from Lemma \ref{lem stable decomp} (or rather its Lie algebra analogue).
\end{proof}

\section{Description of the problem}\label{sec desc}

 Let $F$ and $G$ be as in Section \ref{sec prelim}. Let $P$ be a parabolic subgroup of $G$, and $M$ a Levi component of $P$. 

Now let $\sigma$ be a (complex, smooth admissible) finite length representation of $M$. We denote by $I_P^G(\sigma)$ the representation of $G$ obtained from normalized parabolic induction of $\sigma$. We denote by $X^*(M)$ the lattice of algebraic characters of $M$, which is a lattice of rank $d$ which is the dimension of $A_M$. We set \[\frak{a}_{M}^*:=\BR \otimes_{\BZ} X^*(M)\simeq \BR^d\] and \[\frak{a}_{M,\BC}^*:=\BC \otimes_{\BZ} X^*(M)=\frak{a}_{M}^*+i\frak{a}_{M}^*\simeq \BC^d.\] Hence for $\us\in \frak{a}_{M,\BC}^*$, its real part $\re(\us)\in \frak{a}_{M}^*$ is well defined. Then we have a natural homomorphism 
\[\Phi:\frak{a}_{M,\BC}^*\to \Hom(M,\BC^\times)\] acting on pure tensors by the formula
\[\Phi(s\otimes \chi)=|\chi(m)|_F^s.\] We denote by \[X^0(M):=\ima(\Phi)\] the image of this map, which we call the group of unramified characters of $M$. In particular elements in $X^0(M)$ are naturaly parametrized by vectors $\underline{s}\in \frak{a}_{M,\BC}^*$, and more precisely we set 
\[\chi_{\underline{s}}:=\Phi(\underline{s}).\] For $\underline{s}\in \frak{a}_M^*$ we set 
\[\sigma[\us]:=\chi_{\us}\otimes \sigma.\]
 We fix for the rest of this section a maximal compact subgroup $K$ of $G$ which is in \textit{good position} with respect to $(P,M)$ in the following sense:

\begin{enumerate}
\item $G=PK$.
\item $M\cap K$ is a maximal compact subgroup of $M$.
\item If $P=MU$ is the Levi decomposition of $P$ with respect to $M$, then $P\cap K=(M\cap K)(U\cap K).$ Note that this actually automatically follows from the second condition. 
\end{enumerate}

 Then to $f\in  I_P^G(\sigma)$, one can attach a holomorphic section 
\[f_{\us}\in I_P^G(\sigma[\us])\] where $\us$ varies in $\frak{a}_M^*$. Explicitly putting 
\[\eta_{\us}(umk)=\chi_{\us}(m)\] for $u\in U$, $m \in M$ and $k\in K$, then $f_{\us}=\eta_{\us}f$. 

Now suppose that $(G,H)$ is a symmetric pair associated to the involution $\theta$ of $G$. Write 
\[G=\coprod_{u\in R(P\backslash G /H)} PuH.\] Throughout the paper we will always make the following assumption on $R(P\backslash G/ H)$. 

\begin{ass}
The representatives $u\in R(P\backslash G /H)$ are chosen such that whenever $PuH$ contains $puh$ such that $(M,x_{puh})$ is a vertex, then 
$(M,x_u)$ is a vertex itself. 
\end{ass}

 For any vertex $(M,x_u)$ (where $u\in R(P\backslash G /H)$ always), we denote by $X^0(M)^{\theta_u,-}$ the subgroup of $\theta_u$-anti-invariant characters inside $X^0(M)$:

\[X^0(M)^{\theta_u,-}:=\{\chi\in X^0(M),\ \chi\circ \theta_u=\chi^{-1}\}.\]

The group $X^0(M)^{\theta_u,-}$ is the image under the map $\Phi$ of the complex vector space 
\[\frak{a}_{M,\BC}^*(\theta_u,-1):=\{\underline{s}\in \frak{a}_{M,\BC}^*,\ \theta_u(\underline{s})=-\underline{s}\},\] and we set 
\[\frak{a}_{M}^*(\theta_u,-1):=\frak{a}_{M,\BC}^*(\theta_u,-1)\cap \frak{a}_{M}^*.\]

Now suppose that the vertex $(M,x_u)$ is unimodular with respect to $P$, and fix an invariant linear form in 
\[\ell\in \Hom_{M^{\theta_u}}(\sigma,\BC).\]

It is proved in \cite{MOY} (which considers in fact any vertex $(M,x)$ unimodular with respect to $P$) that if $f_{\us}$ is a holomorphic section of $I_P^G(\sigma[\us])$, the integral 
\[\int_{u^{-1}Pu\cap H\backslash H} \ell(f_{\us}(uh))dh\] is formally well-defined because $x_u$ is unimodular, and convergent for $\re(\us)$ in $\mathcal{D}_{x_u,\sigma}$ where $\mathcal{D}_{x_u,\sigma}\subseteq \frak{a}_M^*(\theta_u,-1)$ is a non empty open cone independent of $f$. This defines a linear form 
\[J_{x_u,\sigma,\ell,\us}:f_{\us}\to \int_{u^{-1}Pu\cap H\backslash H} \ell(f_{\us}(uh))dh\] in $\Hom_H(I_P^G(\sigma[\us]),\BC)$ whenever 
$\re(\us)\in \mathcal{D}_{x_u,\sigma}$. It is then proved in \cite{MOY} that this family of linear forms extends meromorphically to $\us\in \frak{a}_{M,\BC}^*(\theta_u,-1)$ in the following sense: for each $\us_0\in \frak{a}_{M,\BC}^*(\theta_u,-1)$, there exists a nonzero meromorphic function $F_{\us_0}(\us)$ on $\frak{a}_{M,\BC}^*(\theta_u,-1)$ such that $F_{\us_0}(\us)J_{x_u,\sigma,\ell,\us}(f_{\us})$ is holomorphic at $\us_0$ for each $f\in  I_P^G(\sigma)$. When $F$ is $p$-adic we can actually choose $F_{\us_0}$ independent of $\us$, and polynomial in the variable 
$(q^{\pm s_1},\dots, q^{\pm s_d})$ for $q$ the residual characteristic of $F$, where the $s_i$ pertain to the choice of a basis of $\frak{a}_{M,\BC}^*$. Finally if $\ell\neq 0$, then the intertwining period $J_{x_u,\sigma,\ell,\us}(f_{\us})$ is nonzero for at least one $f\in  I_P^G(\sigma)$.

\begin{rem}\label{rem indep}
The statements in \cite{MOY} assume that the parabolic subgroup $P$ of $G$ is standard with respect to a fixed minimal parabolic subgroup $P_0$ containing a $\theta$-stable maximal split torus $T_0$, and that $M$ contains $T_0$. Moreover in \cite[Section 2.2]{MOY}, the maximal compact subgroup $K$ is assumed to be in ``very good position" with respect to $P_0$ and $T_0$. For example when $F=\BR$, though not expicitly stated in \cite{MOY}, one checks that $\theta_c(P)$ is opposite to $P$ for any parabolic subgroup $P$ containing $P_0$, for the Cartan involution $\theta_c$ commuting with $\theta$ selected in \cite[Section 2.1]{MOY}, and having $K$ as fixed points. Indeed $T_0$ being furthermore $\theta_c$-stable in \cite[Section 2.2]{MOY}, and because it is maximally split over $\BR$, the involution $\theta_c$ necessarily acts as $t\to t^{-1}$ on $T_0$, and hence sends the relative root subgroup $U_{\alpha}$ to $U_{-\alpha}$, for any relative root of $T_0$ in the Lie algebra of $G$. In particular the maximal compact subgroup $K$ fixed by $\theta_c$ is in good position with repsect to $(P,M)$ in \cite{MOY}. In Section \ref{sec indep}, we explain how the general setting of Section \ref{sec desc} reduces to the more specific setting of \cite{MOY}. 
\end{rem} 

From now on, we assume that $(G,H)$ is unimodular. If $\ell\neq 0 \in \Hom_{M^{\theta_u}}(\sigma,\BC)$, then for a generic choice of $\us_0\in \frak{a}_{M,\BC}^*(\theta_u,-1)-\{0\}$, which means for $\us_0$ outside of a countable union of hyperplanes in $\frak{a}_{M,\BC}^*(\theta_u,-1)$, there exists $k(\us_0)\in \BZ$ such that the linear map 
\begin{equation}\label{eq reg} J_{x_u,\sigma,\ell}^{*,\us_0}:=\lim_{s\to 0} s^{k(\us_0)}J_{x_u,\sigma,\ell,s\times \us_0}\end{equation} is nonzero. 

\begin{df}
\begin{itemize}
\item We call an element of the form \[J_{x_u,\sigma,\ell}^{*,\us_0}\in \Hom_H(I_P^G(\sigma),\BC)-\{0\}\] a \textit{regularized intertwining period}. 
\item We denote by 
\[\Hom_H^*(I_P^G(\sigma),\BC)\] the subspace of $\Hom_H(I_P^G(\sigma),\BC)$ spanned by regularized intertwining periods.
\end{itemize}
\end{df}

\begin{rem}\label{rem indep rep adm} Note that if $(M,x_u)$ is a vertex, then the elements $u'\in  PuH$ such that $(M,x_{u'})$ is a vertex is exactly the set 
	$MuH$, as follows from \cite[Corollary 6.2]{OJNT}. Hence  $\frak{a}_{M,\BC}^*(\theta_u,-1)=\frak{a}_{M,\BC}^*(\theta_{u'},-1)$ since $M$ acts trivially on $\frak{a}_{M,\BC}^*$. Moreover if $u'=muh$, then $M^{\theta_{u'}}=mM^{\theta_u}m^{-1}$ and $\Hom_{M^{\theta_{u'}}}(\sigma,\BC) \simeq \Hom_{M^{\theta_u}}(\sigma,\BC)$ via $\ell\to \ell\circ \pi(m)$. In particular the space of regularized intertwining periods does not depend on the choice of $u$ such that $x_u$ is $M$-admissible.\end{rem}

Now to each vertex $(M,x_u)$,  we attach a meromorphic function $\mathfrak{n}(x_u,\sigma,\us)$ on $\frak{a}_{M,\BC}^*(\theta_u,-1)$, which we call a normalizing factor. 

\begin{df}
\begin{itemize}
\item We call an element of the form \[\mathfrak{J}_{x_u,\sigma,\ell,\us}:=\mathfrak{n}(x_u,\sigma,\us)J_{x_u,\sigma,\ell, \us}\] a \textit{normalized intertwining period} attached to our fixed family of normalizing factors. 
\item We denote by 
\[\Hom_H^{\mathfrak{n}}(I_P^G(\sigma),\BC)\] the subspace of $\Hom_H(I_P^G(\sigma),\BC)$ spanned by the values at $\us=\underline{0}$ of the normalized intertwining periods which are holomorphic at $\us=\underline{0}$. 
\end{itemize}
\end{df}

We recall from \cite{VDBfinite} and \cite{Delfinite} that the space $\Hom_H(I_P^G(\sigma),\BC)$ is finite dimensional whenever $(G,H)$ is a symmetric pair, for $\sigma$ of finite length. The goal of this paper is to study the following question. 

\begin{q*}
Suppose that $(G,H)$ is a unimodular symmetric pair. Let $\sigma$ be an irreducible representation of $G$. When is the space $\Hom_H(I_P^G(\sigma),\BC)$ spanned by the regularized intertwining periods? When does the equality \begin{equation}\label{eq main} \Hom_H(I_P^G(\sigma),\BC)=\Hom_H^*(I_P^G(\sigma),\BC)\end{equation} hold?
\end{q*}

A positive answer to the above question can be useful. For example in \cite{MatJFA} and \cite{SX} it is used to study distinction of quotients of induced representations, such as discrete series or Speh representations, whereas in \cite{Sign} it is used to compute the sign of linear periods. We hope that the answer is yes in the following case. We recall that $\sigma$ is called \textit{square-integrable} if it is unitary, and its matrix coefficients are square-integrable mod the center of $M$. 

\begin{conjecture}\label{conj main}
\begin{enumerate}[(i)]
\item\label{conj a} Equality \eqref{eq main} holds whenever $\sigma$ is a square-integrable representation of $M$.
\item\label{conj b} Actually, when $\sigma$ is square-integrable, there exists a family of normalizing factors $\mathfrak{n}(x_u,\sigma,\us)$such that \[\Hom_H(I_P^G(\sigma),\BC)=\Hom_H^{\mathfrak{n}}(I_P^G(\sigma),\BC).\] 
\end{enumerate}
\end{conjecture}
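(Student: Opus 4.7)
My plan is to combine the Plancherel-theoretic input of Theorem \ref{thm H-dual discrete temp}, applied to the inducing pair $(M,M^{\theta_u})$, with a Mackey/orbit stratification of $\Hom_H(I_P^G(\sigma[\us]),\BC)$, and then meromorphically continue in $\us$ back to $\underline{0}$. By Proposition \ref{prop unimodular and Levi} each pair $(M,M^{\theta_u})$ attached to a vertex $(M,x_u)$ is again unimodular, hence tempered by Proposition \ref{prop unimodular implies tempered}, so Theorem \ref{thm H-dual discrete temp} yields $\Hom_{M^{\theta_u}}(\sigma,\BC)=\mathcal{H}(\sigma)$ and any $\ell\in \Hom_{M^{\theta_u}}(\sigma,\BC)$ is an integrated matrix coefficient of the square-integrable representation $\sigma$. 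The first concrete step is then to rewrite, in the open-orbit convergence domain $\mathcal{D}_{x_u,\sigma}$, the local intertwining period $J_{x_u,\sigma,\ell,\us}$ as an iterated integral of a matrix coefficient of $I_P^G(\sigma[\us])$ against Haar measure on $H$, producing from any $L\in\Hom_H(I_P^G(\sigma[\us]),\BC)$ a distinguished linear combination of such periods.

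The core spectral step is to run this through the Plancherel decomposition of $L^2(H\backslash G)$: since $(G,H)$ is tempered, for $\us$ purely imaginary in a regular direction $I_P^G(\sigma[\us])$ is irreducible tempered and its class appears in the disintegration of $L^2(H\backslash G)$, so any $L$ must be detected by a sum of matrix-coefficient integrals over $H/A_G\cap H$, which in turn should match
\[
J_{\frak{v},\sigma,\ell^{\frak{v}},\us}=\sum_i J_{x_{u_i^{\frak{v}}},\sigma,\ell_i^{\frak{v}},\us}
\]
as $u_i^{\frak{v}}$ ranges over the representatives with a common $\frak{v}$. Meromorphic continuation, together with the regularization procedure \eqref{eq reg gen}, would then transport the spanning statement from the generic imaginary locus to $\us=\underline{0}$, yielding Part \ref{conj a}.

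For Part \ref{conj b} I would choose normalizing factors $\mathfrak{n}(x_u,\sigma,\us)$ in the Langlands--Shahidi spirit: decompose $J_{x_u,\sigma,\ell,\us}$ into rank-one building blocks along $\theta_u$-orbits of roots of $A_M$ in the unipotent radical of $P$, and absorb the expected poles and zeros into a product of local $L$- and $\varepsilon$-factors associated to the Langlands parameter of $\sigma$ twisted by the $\theta_u$-action, with the rank-one computations of Section \ref{sec small} furnishing the atomic cases. One then has to verify that these normalized periods are jointly holomorphic at $\us=\underline{0}$ and still span $\Hom_H(I_P^G(\sigma),\BC)$; linear independence across distinct $\frak{v}$ should follow by extracting the asymptotic behaviour of each component along a generic half-line in the corresponding $\frak{v}$, since distinct $\frak{v}$'s are separated by their $\theta_u$-anti-invariant weights on $A_M$.

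The principal obstacle I anticipate is the coherent control of singularities on walls of $\frak{a}_{M,\BC}^*(\theta_u,-1)$ coming from different vertices with a common $\frak{v}$: the individual terms $J_{x_{u_i^{\frak{v}}},\sigma,\ell_i^{\frak{v}},\us}$ may be singular while only their sum admits the correct regularization, so one needs precise relations among their leading terms. This is exactly the place where the group case in Section \ref{sec group case} reduces to Harish-Chandra's completeness theorem for the commuting algebra of $I_P^G(\sigma)$, and beyond the group and rank-one settings the missing ingredient is a sufficiently explicit Plancherel decomposition of $L^2(H\backslash G)$ in the spirit of Sakellaridis--Venkatesh. My expectation is therefore that the most realistic short-term strategy is to combine the rank-one analysis of Section \ref{sec small} with Proposition \ref{prop unimodular and Levi} to propagate the conjecture inductively from Levi subgroups, rather than to attempt a direct proof in full generality.
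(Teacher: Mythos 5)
The statement you are addressing is Conjecture \ref{conj main}: it is an open conjecture, and the paper itself offers no general proof, only verifications in special cases (the group case via Harish-Chandra's commuting-algebra theorem, the pair $(\GL_n(E),\U_n(E/F))$ via \cite{FLO} and \cite{BPunitary}, multiplicity-one Galois and twisted-linear pairs, and rank-one special linear cases). Your proposal is a research strategy rather than a proof, and you candidly flag this yourself; but two of its steps are not merely incomplete, they would fail as stated. First, the spectral step: Theorem \ref{thm H-dual discrete temp} applies to \emph{square-integrable} representations, and you may legitimately invoke it for $(M,M^{\theta_u})$ to describe $\Hom_{M^{\theta_u}}(\sigma,\BC)$ canonically (this is exactly Remark \ref{rem norm in other prop}). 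But you then apply the same matrix-coefficient philosophy to $I_P^G(\sigma[\us])$ relative to $H$: for a tempered representation that is induced rather than square-integrable, the matrix coefficients are \emph{not} in $L^1(H/A_G\cap H)$, so ``any $L$ must be detected by a sum of matrix-coefficient integrals over $H/A_G\cap H$'' is not available. Making that step rigorous is tantamount to the explicit Plancherel decomposition of $L^2(H\backslash G)$ together with an identification of its spectral functionals with open intertwining periods --- precisely the Sakellaridis--Venkatesh-type input you name as missing. Note also that the cone $\mathcal{D}_{x_u,\sigma}$ of convergence of $J_{x_u,\sigma,\ell,\us}$ lies in $\frak{a}_M^*(\theta_u,-1)$ and does not meet the purely imaginary locus where your Plancherel argument would live, so even the matching of the two families of functionals requires an analytic continuation that is part of what must be proven.

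Second, and more decisively, the specialization step: knowing that the (normalized or regularized) intertwining periods span $\Hom_H(I_P^G(\sigma[\us]),\BC)$ for generic $\us$ does \emph{not} imply that their regularized values at $\us=\underline{0}$ span $\Hom_H(I_P^G(\sigma),\BC)$. The dimension of the target can jump at $\us=\underline{0}$, leading terms of different $J_{x_{u_i^{\frak{v}}},\sigma,\ell_i^{\frak{v}},\us}$ can cancel, and the regularized limits can become linearly dependent or supported off the open orbits (this is the content of Proposition \ref{prop support} and is exactly the phenomenon handled by hand in Section \ref{sec small}). Every case the paper actually settles requires a separate external input at this point: the completeness of self-intertwining operators for the group case, the dimension count $\dim\Hom_H(I_P^G(\sigma),\BC)=2^{r-1}$ from \cite{BPunitary} for unitary groups, multiplicity one for the inner-form pairs, and explicit Tate-integral computations in rank one. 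Your concluding suggestion --- propagate the conjecture inductively from Levi subgroups using Proposition \ref{prop unimodular and Levi} and the rank-one analysis --- is reasonable and consistent with the paper's outlook, but no such induction is carried out either in your proposal or in the paper, so the conjecture remains open.
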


\begin{rem}\label{rem norm int other prop}
The normalization factors in Conjecture \ref{conj main}\eqref{conj b} could probably be defined whenever $\sigma$ is irreducible, and required to satisfy other basic properties, especially with respect to some functional equations of local intertwining periods with respect to normalized intertwining operators. We leave this investigation to somewhere else.
\end{rem}

\begin{rem}\label{rem norm in other prop}
In view of Theorem \ref{thm H-dual discrete temp} and Propositions \ref{prop unimodular implies tempered}, \ref{prop tempered implies strongly discrete}, and \ref{prop unimodular and Levi}, Conjecture \ref{conj main} gives a canonical description of $\Hom_H(I_P^G(\sigma),\BC)$ at least when $F$ is $p$-adic, up to the fact that the normalization factors can have a canonical description.
\end{rem}

In the following remark, we discuss a possible extension of the conjecture to more general pairs.

\begin{rem}\label{rem non unimodular case}
Suppose that $(M,x_u)$ is any vertex, not necessarily unimodular with respect to $P$. We set 
\[\delta_{x_u}:= \delta_{P^{\theta_u}}\times \delta_{P}^{-1/2},\] and view it as a character of $M^{\theta_u}$. Then, for \[\ell\in \Hom_{M^{\theta_u}}(\sigma,\delta_{x_u}),\] it is formally possible to consider the intertwining period 
\[J_{x_u,\sigma,\ell,\us}(f_{\us}):=\int_{u^{-1}Pu\cap H\backslash H} \ell(f_{\us}(uh))dh\] for $f_{\us}$ a holomorphic section of 
$I_P^G(\sigma[\us])$. However \cite{MOY} fails to prove that such an integral converges for $\us$ in an open cone, and it would be interesting to check if the proof of convergence there can be extended without too much efforts to the setting of tempered symmetric pairs. On the other hand 
the proof of \cite[Theorem 5.4]{MOY} provides a meromorphic family of $H$-invariant linear forms on $I_P^G(\sigma[\us])$, which agrees with usual intertwining periods when $x_u$ is unimodular, and would agree with the meromorphic continuation of the above general integrals if they were known to converge in a cone when $x_u$ is not unimodular. So denoting by $J_{x_u,\sigma,\ell,\us}$ the ``intertwining periods" defined in the proof of \cite[Theorem 5.4]{MOY}, it is again possible to make Conjecture \ref{conj main} with respect to these general intertwining periods. It is plausible that it could hold for tempered symmetric pairs. For the non unimodular examples which we are familiar with, which are all tempered, Conjecture \ref{conj main}\eqref{conj a} indeed still holds. See Remark \ref{rem linear model}. 
\end{rem}

\section{Basic properties of intertwining periods}\label{sec indep}

In this section $(G,H)$ is unimodular. We check that the basic properties of intertwining periods (see Remark \ref{rem indep}), as well the statement of Conjecture \ref{conj main} are independent of some choices. For example, in Section \ref{sec desc}, we used a fixed maximal compact subgroup of $G$ in good position with respect to $(P,M)$ to define holomorphic sections. Here we observe that if Conjecture \ref{conj main} holds for one choice of maximal compact subgroup well positioned with respect to $(P,M)$, then it holds for any. Then we also prove that Conjecture \ref{conj main} does not depend on the conjugacy class of $H$ inside $G$, in the sense that if it true for $H$, it is automatically true for $gHg^{-1}$ whenever $g\in G$. Then we check a generalization of this property, which for example applies when $G$ is a special linear group but $g$ above belongs to the corresponding general linear group. The reason in each case is a similar argument. 

\subsection{Intertwining periods and $G$-conjugacy of $H$ and $(P,M)$}\label{sec indep conj1}

Here we check that to claim the properties of intertwining periods stated in Section \ref{sec desc}, we may assume that $P$ is standard with respect to a fixed minimal parabolic subgroup $P_0$ containing a maximal split torus $T_0$ which is $\theta$-stable, and that $M$ is standard with respect to $T_0$, as in \cite{MOY}. Before that, we check that Conjecture \ref{conj main} does not depend on the $G$-conjugacy class of $H$. 

First we verify that if Conjecture \ref{conj main} holds for the unimodular symmetric pair $(G,H)$, it automatically holds for $(G,gHg^{-1})$, and even more. Let $\sigma$ be a finite length representation of $M$. We set $H':=gHg^{-1}$, and observe that it is the group of fixed points of the involution $\theta_g$. Clearly the spaces $\Hom_{H}(I_P^G(\sigma),\BC)$ and $\Hom_{gHg^{-1}}(I_P^G(\sigma),\BC)$ have the same (finite) dimension, as the map $L\to L\circ \pi(g)^{-1}$ provides an isomorphism between them (where $\pi=I_P^G(\sigma)$). Now if 
\[G=\coprod_{u\in R(P\backslash G /H)} PuH,\] then 
\[G=\coprod_{u\in R(P\backslash G /H)} Pug^{-1}H'.\]

Moreover we recall the relation $\theta_u=(\theta_g)_{u g^{-1}},$ hence for any vertex $(M,x_u)$ we have 
\[\Hom_{M^{\theta_u}}(\sigma,\BC)=\Hom_{M^{(\theta_g)_{ug^{-1}}}}(\sigma,\BC)\] and \[\frak{a}_{M,\BC}^*(\theta_u,-1)=\frak{a}_M^*((\theta_g)_{ug^{-1}},-1).\] 
For $\re(\us)$ in an appropriate open cone of 
$\frak{a}_{M,\BC}^*(\theta_u,-1)$, we have the equality of convergent integrals 
\[\int_{u^{-1}Pu\cap H\backslash H} \ell(f_{\us}(uh))dh=\int_{gu^{-1}Pug^{-1}\cap H'\backslash H'} \ell(f_{\us}(ug^{-1}h'g))dh'\] for any holomorphic section $f_{\us} \in I_P^G(\sigma)$ with respect to $K$. We thus deduce the equality 
\begin{equation} \label{eq rel int conj} J_{x_u,\sigma,\ell,\us}^H(f_{\us})=J_{x_{ug^{-1}},\sigma,\ell,\us}^{H'}(\rho(g)f_{\us}),\end{equation}
and conversely 
\[J_{x_{ug^{-1}},\sigma,\ell,\us}^{H'}(f_{\us})=J_{x_u,\sigma,\ell,\us}^H(\rho(g^{-1})f_{\us}),\] where $\rho$ stands for the right translation. 

Observing that both $\rho(g)f_{\us}$ and $\rho(g^{-1})f_{\us}$ are a holomorphic combinations of holomorphic sections in $I_P^G(\sigma[\underline{s}])$, we deduce the following.

\begin{prop}\label{prop indep conj1}
Let $\sigma$ be a finite length representation of $M$. Then 
\[\Hom_H(I_P^G(\sigma),\BC)=\Hom_H^*(I_P^G(\sigma),\BC)\iff \Hom_{H'}(I_P^G(\sigma),\BC)=\Hom_{H'}^*(I_P^G(\sigma),\BC).\] Moreover for any family of normalization factors $\mathfrak{n}(x_u,\sigma,\us)$, setting $\mathfrak{n'}(x_{ug^{-1}},\sigma,\us):=\mathfrak{n}(x_u,\sigma,\us)$, one has 
\[\Hom_H(I_P^G(\sigma),\BC)=\Hom_H^{\mathfrak{n}}(I_P^G(\sigma),\BC)\iff \Hom_{H'}(I_P^G(\sigma),\BC)=\Hom_{H'}^{\mathfrak{n'}}(I_P^G(\sigma),\BC)\] for any family of normalizing factors. In particular Conjecture \ref{conj main} holds for the unimodular pair $(G,H)$ if and only if it holds for $(G,H')$. 
\end{prop}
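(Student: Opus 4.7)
The plan is to introduce the $\BC$-linear isomorphism
$$\Psi: \Hom_H(I_P^G(\sigma),\BC) \to \Hom_{H'}(I_P^G(\sigma),\BC), \quad L \mapsto L \circ \pi(g^{-1}),$$
where $\pi = I_P^G(\sigma)$, and to check that it maps regularized (respectively normalized) intertwining periods for $(G,H)$ onto those for $(G,H')$, via the bijection $u \mapsto u g^{-1}$ on double coset representatives. That $\Psi$ is well-defined and bijective is straightforward: writing any $h' \in H'$ as $g h g^{-1}$ with $h \in H$ and using $H$-invariance of $L$ yields $H'$-invariance of $L \circ \pi(g^{-1})$, and the inverse is $L' \mapsto L' \circ \pi(g)$.

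First I would verify the relevant combinatorics. The map $u \mapsto u g^{-1}$ gives a bijection $R(P\backslash G/H) \to R(P\backslash G/H')$; the identity $\theta_u = (\theta_g)_{u g^{-1}}$ recalled in the text shows that $x_u$ is $M$-admissible if and only if $x_{u g^{-1}}$ is, that the stabilizers $M^{\theta_u}$ and $M^{(\theta_g)_{u g^{-1}}}$ coincide inside $M$, and that the spaces $\frak{a}_{M,\BC}^*(\theta_u,-1)$ attached to either side also agree. Hence the data indexing intertwining periods on the two sides are in natural bijection, and one can keep the same invariant form $\ell$ on $\sigma$ throughout.

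Next, the computation leading to \eqref{eq rel int conj} shows that, in the domain of convergence, $\Psi(J_{x_u,\sigma,\ell,\us}^H) = J_{x_{u g^{-1}},\sigma,\ell,\us}^{H'}$ as linear forms on holomorphic sections of $I_P^G(\sigma[\us])$. Since $\pi(g^{\pm 1})$ sends holomorphic sections to holomorphic sections, as already noted in the text just before the statement, both sides are meromorphic in $\us$ and agree on a non-empty open set, hence agree identically on $\frak{a}_{M,\BC}^*(\theta_u,-1)$. Passing to leading coefficients as in \eqref{eq reg gen} preserves this identity: for any subspace $\frak{v} \subseteq \frak{a}_{M,\BC}^*$, any family $(\ell_i^{\frak{v}})$ of invariant forms, and any generic $\us_0 \in \frak{v}$, the map $\Psi$ sends the regularized period $J_{\frak{v},\sigma,\ell^{\frak{v}}}^{*,\us_0}$ for $(G,H)$ to the analogous regularized period for $(G,H')$ built from the representatives $u_i^{\frak{v}} g^{-1}$ and the same $\ell_i^{\frak{v}}$. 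Therefore $\Psi$ restricts to a bijection $\Hom_H^*(I_P^G(\sigma),\BC) \to \Hom_{H'}^*(I_P^G(\sigma),\BC)$, and the first equivalence of the proposition follows.

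For the normalized statement, the exact same argument applies after multiplying by $\mathfrak{n}(x_u,\sigma,\us) = \mathfrak{n'}(x_{u g^{-1}},\sigma,\us)$: the correspondence respects holomorphy at $\us = \underline{0}$, so the subspaces $\Hom_H^{\mathfrak{n}}$ and $\Hom_{H'}^{\mathfrak{n'}}$ correspond under $\Psi$. The assertion on Conjecture \ref{conj main} then follows by combining both equivalences and noting that $\mathfrak{n} \mapsto \mathfrak{n'}$ is a bijection between families of normalization factors on either side. No genuine obstacle arises here: the proposition is essentially a formal bookkeeping corollary of the identity \eqref{eq rel int conj} that has already been established, and the only point requiring a small amount of care is the compatibility of meromorphic continuation and extraction of leading coefficients with the operator $\pi(g^{-1})$, which itself is immediate from its preservation of holomorphy.
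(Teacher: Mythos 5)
Your argument is correct and takes essentially the same route as the paper: the isomorphism $L\mapsto L\circ\pi(g^{-1})$, the bijection $u\mapsto ug^{-1}$ on representatives together with $\theta_u=(\theta_g)_{ug^{-1}}$, and identity \eqref{eq rel int conj} transport regularized and normalized intertwining periods between the two sides. One small correction of wording: $\rho(g^{\pm 1})$ does not send holomorphic sections to holomorphic sections, but to holomorphic combinations of holomorphic sections (as stated in the text), which is the fact actually needed and which suffices for the meromorphic continuation and leading-coefficient steps.
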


Now if $P'=gPg^{-1}$ and $M'=gMg^{-1}$, we observe that if $K$ is our chosen maximal compact subgroup in good position with respect to $(P,M)$, then $K'=gKg^{-1}$ is in good position with respect to $(P',M')$. 
 Moreover if $f_{\us}^K$ is holomorphic section of $I_P^G(\sigma[\us])$ with respect to $K$, then 
\[f_{\us}^{K'}:=\lambda(g^{-1})\rho(g)f_{\us}^K=f_{\us}^K(g^{-1} \ \bullet \ g)\] is a holomorphic section of $I_{P'}^G(\sigma^g[\us])$ with respect to $K'$, where $\sigma^g:=\sigma(g^{-1} \ \bullet \ g)$ and $\lambda$ stands for the left translation. One has the decomposition 
\[G=\coprod_{u\in R(P\backslash G /H)} P'guH,\] and $(M,x_u)$ is a vertex if and only if $(M',x_{gu})$ is a vertex. Hence from the equality 
\[\int_{u^{-1}Pu\cap H\backslash H} \ell(f_{\us}^K(uh))dh=\int_{u^{-1}g^{-1}P'gu\cap H\backslash H} \ell(f_{\us}^{K'}(guhg^{-1}))dh,\] and using the fact that $\rho(g^{-1})f_{\us}^{K'}$ is a holomorphic combination of holomorphic sections with respect to $K'$, we deduce from the above discussion and the next section on the independence of well-positioned maximal compact subgroups, that we could choose the pair $(P,M)$ such that $M$ contains $T_0$ and $P$ contains $P_0$ to state the basic properties of intertwining periods in Section \ref{sec desc}, as it is always conjugate to such a pair. 

\subsection{Intertwining periods and maximal compact subgroups}\label{sec indep comp} 

Suppose that $K$ and $K'$ are two maximal compact subgroups of $G$ in good position with respect to $(P,M)$. In order to define $J_{x_u,\sigma,\ell,\us}$ and its regularization along generic directions, we fixed a compact subgroup $K$ as above. First we need to justify that the basic properties of $J_{x_u,\sigma,\ell,\us}$ stated in Section \ref{sec desc} are independent on this choice, and then we need to claim for their regularizations along a generic direction give the same $H$-invariant linear form up to a nonzero scalar. These facts follow from the fact that if $f_{\us}^K$ is a holomorphic section of $I_P^G(\sigma[\us])$ with respect to $K$, then it is a holomorphic combination of holomorphic sections with respect to $K'$, and conversely.

\subsection{Intertwining periods and  $\tilde{G}$-conjugacy}\label{sec indep conj2}

Let $\mathbf{G}$ be the algebraic reductive group defined over $F$ such that $G=\mathbf{G}(F)$. In this section we make the following further assumption:

\begin{ass}
There exists an $F$-reductive group $\tilde{\mathbf{G}}$ containing $\mathbf{G}$ such that $\mathbf{G}$ is the derived subgroup of $\tilde{\mathbf{G}}$. 
\end{ass}

We set $\tilde{G}=\tilde{\mathbf{G}}(F)$. Then the map $\tilde{P}=\tilde{\mathbf{P}}(F)\to P:=(\tilde{\mathbf{P}}\cap \mathbf{G})(F)$ is a bijection from the set of parabolic subgroups of $\tilde{G}$ to that of parabolic subgroups of $G$. Fixing $\tilde{P}$ a parabolic subgroup of $\tilde{G}$, then the map $\tilde{M}=\tilde{\mathbf{M}}(F)\to M:=(\tilde{\mathbf{M}}\cap \mathbf{G})(F)$ is a bijection from the set of Levi components of $\tilde{P}$ to that of Levi components of $P$. Furthermore:

\begin{enumerate}
\item $M$ is a normal subgroup of $\tilde{M}$.
\item $\tilde{M}=\tilde{M_{0}}M$ for one (hence for any) minimal Levi subgroup $\tilde{M_{0}}$ of $\tilde{M}$. 
\item $\frac{\tilde{M}}{A_{\tilde{M}}M}$ is a finite abelian group. 
\end{enumerate}

We write $F^{\tilde{g}}(x)=F(\tilde{g}^{-1} x \tilde{g})$ whenever $\tilde{g}\in \tilde{G}$ and $F$ is a map on a set $X$ contained in $G$. The first result that we want to prove here is the following: 

\begin{prop}\label{prop GL conj in SL}
Let $\sigma$ be a finite length representation of $M$, let $\tilde{g}=\tilde{m}g$, $\tilde{m}\in M$, $g\in G$, belong to $\tilde{G}$, and set 
$H':=\tilde{g} H \tilde{g}^{-1}$, so that \[\Hom_H(I_P^G(\sigma),\BC)\simeq \Hom_{H'}(I_P^G(\sigma)^{\tilde{g}},\BC)\simeq \Hom_{H'}(I_P^G(\sigma^{\tilde{m}}),\BC).\] Then 
\[\Hom_H(I_P^G(\sigma),\BC)=\Hom_H^*(I_P^G(\sigma),\BC) \iff \Hom_{H'}(I_P^G(\sigma^{\tilde{m}}),\BC)=\Hom_{H'}^*(I_P^G(\sigma^{\tilde{m}}),\BC).\]
Moreover for any family of normalization factors $\mathfrak{n}(x_u,\sigma,\us)$, setting \[\mathfrak{n'}(x_{\tilde{m}ug^{-1}\tilde{m}^{-1}},\sigma,\us):=\mathfrak{n}(x_u,\sigma,\us),\] one has 
\[\Hom_H(I_P^G(\sigma),\BC)=\Hom_H^\mathfrak{n}(I_P^G(\sigma),\BC) \iff \Hom_{H'}(I_P^G(\sigma^{\tilde{m}}),\BC)=\Hom_{H'}^\mathfrak{n'}(I_P^G(\sigma^{\tilde{m}}),\BC).\]

In particular Conjecture \ref{conj main} holds for the unimodular pair $(G,H)$ if and only if it holds for the pair $(G,\tilde{g}H\tilde{g}^{-1})$. 

Moreover if $\sigma$ extends to $\tilde{M}$ so that $\Hom_H(I_P^G(\sigma),\BC)\simeq \Hom_{H'}(I_P^G(\sigma) ,\BC),$ then \[\Hom_H(I_P^G(\sigma),\BC)=\Hom_H^*(I_P^G(\sigma),\BC) \iff \Hom_{H'}(I_P^G(\sigma) ,\BC)=\Hom_{H'}^*(I_P^G(\sigma) ,\BC),\] and the same holds with $\Hom_H^{\mathfrak{n}}(I_P^G(\sigma),\BC)$ in place of $\Hom_H^*(I_P^G(\sigma),\BC)$ and $\Hom_{H'}^{\mathfrak{n'}}(I_P^G(\sigma),\BC)$ in place of $\Hom_{H'}^*(I_P^G(\sigma),\BC)$. 
\end{prop}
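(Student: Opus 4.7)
The approach mirrors the proof of Proposition \ref{prop indep conj1}, with extra care to handle conjugation by an element of $\tilde{M}$ that need not lie in $M$. My first step is to factor $\tilde{g}=\tilde{m}g$ with $\tilde{m}\in\tilde{M}$ and $g\in G$ and apply Proposition \ref{prop indep conj1} to the $G$-conjugation by $g$; this reduces the whole statement to the case $\tilde{g}=\tilde{m}\in\tilde{M}$, the intermediate pair being $(G,gHg^{-1})$ and the representatives $u$ being replaced by $ug^{-1}$. The normalization formula $\mathfrak{n}'(x_{\tilde{m}ug^{-1}\tilde{m}^{-1}},\sigma,\us)=\mathfrak{n}(x_u,\sigma,\us)$ in the statement precisely reflects this two-step conjugation.

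Assuming now $\tilde{g}=\tilde{m}\in\tilde{M}$, I observe that $\tilde{m}$ normalizes $P$, $M$ and $U_P$, and that $\delta_P$ is invariant under $\mathrm{Ad}(\tilde{m})$ since $\mathrm{Ad}(\tilde{m})$ is a linear automorphism of $\mathrm{Lie}(U_P)$. It follows that the formula $\phi:f\mapsto(x\mapsto f(\tilde{m}^{-1}x\tilde{m}))$ defines a $G$-equivariant isomorphism $\phi:I_P^G(\sigma)^{\tilde{m}}\simeq I_P^G(\sigma^{\tilde{m}})$. Using that $G$ is normal in $\tilde{G}$, the set $\{\tilde{m}u\tilde{m}^{-1}:u\in R(P\backslash G/H)\}$ is a system of representatives for $P\backslash G/H'$. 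Writing $v=\tilde{m}u\tilde{m}^{-1}$, a direct computation yields $\theta'_v=(\theta_u)_{\tilde{m}}$, from which I deduce that vertices for $\theta$ are in bijection with vertices for $\theta'$, that $M^{\theta'_v}=\tilde{m}M^{\theta_u}\tilde{m}^{-1}$, and that $\frak{a}_{M,\BC}^*(\theta'_v,-1)=\mathrm{Ad}(\tilde{m})(\frak{a}_{M,\BC}^*(\theta_u,-1))$. A short check on the common underlying vector space of $\sigma$ and $\sigma^{\tilde{m}}$ shows that a linear form $\ell$ belongs to $\Hom_{M^{\theta_u}}(\sigma,\BC)$ if and only if it belongs to $\Hom_{M^{\theta'_v}}(\sigma^{\tilde{m}},\BC)$.

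The heart of the argument is then the identity obtained by substituting $h'=\tilde{m}h\tilde{m}^{-1}$ in the integral defining $J^{H'}_{x_v,\sigma^{\tilde{m}},\ell,\us'}(\phi(f_{\us}))$: one computes $vh'=\tilde{m}uh\tilde{m}^{-1}$ and $\phi(f_{\us})(vh')=f_{\us}(uh)$, so that, up to a positive scalar coming from the conjugation isomorphism of invariant measures on $H\simeq H'$,
\[J^{H'}_{x_v,\sigma^{\tilde{m}},\ell,\us'}(\phi(f_{\us}))=J^{H}_{x_u,\sigma,\ell,\us}(f_{\us}),\]
where $\us'\in\frak{a}_{M,\BC}^*(\theta'_v,-1)$ corresponds to $\us\in\frak{a}_{M,\BC}^*(\theta_u,-1)$ via the $\BC$-linear isomorphism induced by $\mathrm{Ad}(\tilde{m})$. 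Since $\phi$ sends holomorphic sections of $I_P^G(\sigma[\us])$ for $K$ to holomorphic sections of $I_P^G(\sigma^{\tilde{m}}[\us'])$ for the well-positioned subgroup $\tilde{m}^{-1}K\tilde{m}$, and independence on the choice of well-positioned maximal compact has been addressed in Section \ref{sec indep comp}, the identity implies that the two families of intertwining periods share domains of convergence, meromorphic continuations, and orders of vanishing along generic directions.

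The equivalence for $\Hom^*$ then follows from the fact that $L\mapsto L\circ\phi^{-1}$ sends regularized intertwining periods to regularized intertwining periods, and the equivalence for $\Hom^{\mathfrak{n}}$ is obtained identically after transporting normalization factors via the prescribed rule. When $\sigma$ extends to a representation $\tilde{\sigma}$ of $\tilde{M}$, the operator $\tilde{\sigma}(\tilde{m})$ realizes an $M$-equivariant isomorphism $\sigma^{\tilde{m}}\simeq\sigma$ that allows one to replace $\sigma^{\tilde{m}}$ by $\sigma$ in the final formulation. The main subtlety to handle carefully will be to confirm that the measure adjustment for $h'=\tilde{m}h\tilde{m}^{-1}$ reduces to a positive constant, and that the $\BC$-linear change of variable $\us\leftrightarrow\us'$ induced by $\mathrm{Ad}(\tilde{m})$ genuinely preserves the notion of regularization along generic directions in the respective complexified real subspaces.
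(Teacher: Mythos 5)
Your proposal is correct and follows essentially the same route as the paper: reduce to $\tilde{g}=\tilde{m}\in\tilde{M}$ via Proposition \ref{prop indep conj1}, transport the double cosets and linear forms through $u\mapsto\tilde{m}u\tilde{m}^{-1}$ using $\theta_{\tilde{m}u}=(\theta_u)_{\tilde{m}}$, match the intertwining periods by the change of variables in the defining integral, and invoke the independence of the well-positioned maximal compact subgroup. The one "subtlety" you flag at the end is vacuous: since $\tilde{M}$ acts trivially on $\frak{a}_{M,\BC}^*$, the identification $\frak{a}_{M,\BC}^*(\theta_{\tilde{m}u\tilde{m}^{-1}},-1)=\frak{a}_{M,\BC}^*(\theta_u,-1)$ is the identity map, so there is nothing to verify about regularization directions, and the measure adjustment is indeed just a positive constant.
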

\begin{proof}
We set $H':=\tilde{g}H\tilde{g}^{-1}$. According to Proposition \ref{prop indep conj1}, we may assume that $K=\tilde{K}\cap G$ such that $\tilde{K}$ is as in Lemma \ref{lm double good position}. Moreover because $\tilde{g}=\tilde{m}g$ for $g\in G$ and $\tilde{m}$ in $\tilde{M}$, we may assume thanks to Section \ref{sec indep conj1} that $\tilde{g}=\tilde{m}$. In particular $H'=G^{\theta_{\tilde{m}}}$. 
 First we observe that 
\[I_P^G(\sigma)^{\tilde{m}}\simeq I_P^G(\sigma^{\tilde{m}}).\] From this we deduce that 
\[\Hom_H(I_P^G(\sigma),\BC)\simeq \Hom_{H'}(I_P^G(\sigma^{\tilde{m}},\BC)).\]
Now if 
\[G=\coprod_{u\in R(P\backslash G /H)} PuH,\] because $\tilde{m}$ normalizes $P$, we have 
\[G=\coprod_{u\in R(P\backslash G /H)} P\tilde{m}u\tilde{m}^{-1}H'.\] We fix $u$ such that $(M,x_u)$ is a vertex. Then observe that 
\[(\theta_{\tilde{m}})_{\tilde{m}u\tilde{m}^{-1}}=\theta_{\tilde{m}u}=(\theta_{u})_{\tilde{m}}.\] In particular 
\[\frak{a}_{M,\BC}^*((\theta_{\tilde{m}})_{\tilde{m}u\tilde{m}^{-1}},-1)=\frak{a}_{M,\BC}^*((\theta_{u})_{\tilde{m}},-1))=\frak{a}_{M,\BC}^*(\theta_u,-1),\] since 
$\tilde{M}$ atcs trivially on $\frak{a}_{M,\BC}^*\subseteq \frak{a}_{\tilde{M},\BC}^*$. Also 
\[\tilde{m} M^{\theta_u}\tilde{m}^{-1}= M^{\theta_{\tilde{m}u}}.\] Obviously the identity map 
$\ell\to \ell$ is an isomorphism between $\Hom_{M^{\theta_u}}(\sigma,\BC)$ and 
$\Hom_{M^{\theta_{\tilde{m}u}}}(\sigma^{\tilde{m}},\BC)$. Take $f_{\us}$ a holomorphic section of $I_P^G(\sigma[\us])$ with respect to $K$ for $f_{\us}$. 

Then 
for $\us$ in an appropriate open cone of 
$\frak{a}_{M,\BC}^*(\theta_u,-1)$, we have the equality of convergent integrals 
\begin{equation*}
	\begin{split}
	&	\int_{u^{-1}Pu\cap H\backslash H} \ell(f_{\us}(uh))dh\\
		&= \int_{\tilde{m}u^{-1}\tilde{m}^{-1}P\tilde{m}u\tilde{m}^{-1}\cap H'\backslash H'} 
		\ell(f_{\us}^{\tilde{m}}((\tilde{m}u\tilde{m}^{-1})h')dh
	\end{split}
\end{equation*} where $f_{\us}^{\tilde{m}}$ is a holomorphic section of $I_P^G(\sigma^{\tilde{m}}[\us])$ with respect to $\tilde{m}K\tilde{m}^{-1}$. Hence
\[J_{x_u,\sigma,\ell,\us}^H(f_{\us})=J_{x_{\tilde{m}u\tilde{m}^{-1}},\sigma,\ell,\us}^{H'}(f_{\us}^{\tilde{m}}).\]
We then conclude the first statement of the proposition thanks to the arguments in Section \ref{sec indep comp}. The last statement is now clear. The third one follows from the fact that if $\sigma$ extends to a representation $\tilde{\sigma}$ of $\tilde{M}$, then $I_P^G(\sigma)$ is the restriction to $G$ of $I_{\tilde{P}}^{\tilde{G}}(\tilde{\sigma})$, and $I_P^G(\sigma)^{\tilde{g}}$ is the restriction to $G$ of $I_{\tilde{P}}^{\tilde{G}}(\tilde{\sigma})^{\tilde{g}}$, but $I_{\tilde{P}}^{\tilde{G}}(\tilde{\sigma})^{\tilde{g}}\simeq I_{\tilde{P}}^{\tilde{G}}(\tilde{\sigma})$ hence 
$I_P^G(\sigma)^{\tilde{g}}\simeq I_P^G(\sigma)$. 
\end{proof}

We now want to prove a related but slightly different result. We will make use of the following fact.

\begin{lem}\label{lm double good position}
Fix $P$ (resp. $M$) a parabolic subgroup of $G$ (resp. a Levi component of $P$). Let $\tilde{P}$ be the parabolic subgroup of $\tilde{G}$ such that $P=G\cap \tilde{P}$ and $\tilde{M}$ its Levi subgroup such that $M=G\cap \tilde{M}$. Then there exists $\tilde{K}$ a maximal compact subgroup of $\tilde{G}$ in good position with respect to $(\tilde{P},\tilde{M})$ such that $K:=\tilde{K}\cap G$ is a maximal compact subgroup of $G$ in good position with respect to $(P,M)$.
\end{lem}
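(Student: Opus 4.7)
The strategy is to build $\tilde K$ from a minimal parabolic datum of $\tilde G$ and then simply intersect with $G$, using in a crucial way that $\mathbf G$ is the derived subgroup of $\tilde{\mathbf G}$.

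First I would fix a maximal $F$-split torus $\tilde{\mathbf T}_0$ of $\tilde{\mathbf G}$ contained in $\tilde{\mathbf M}$, and a minimal $F$-parabolic $\tilde P_0$ of $\tilde G$ with $\tilde T_0\subseteq \tilde P_0\subseteq \tilde P$; let $\tilde M_0$ be the centralizer of $\tilde T_0$ in $\tilde G$. Because $\tilde{\mathbf G}/\mathbf G$ is a torus, it contains no non-trivial unipotent element, so every unipotent radical of a parabolic of $\tilde{\mathbf G}$ is already contained in $\mathbf G$; in particular $P_0:=\tilde P_0\cap G$ is a minimal parabolic of $G$ with Levi $M_0:=\tilde M_0\cap G$ containing the maximal $F$-split torus $T_0:=(\tilde{\mathbf T}_0\cap\mathbf G)(F)$, and similarly $P=\tilde P\cap G$ has Levi $M=\tilde M\cap G$ with $U_P=U_{\tilde P}$.

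Next I construct $\tilde K$ adapted to $(\tilde T_0,\tilde P_0)$ in such a way that $\tilde K\cap G$ is automatically maximal compact in $G$. In the Archimedean case I take a Cartan involution $\tilde\theta_c$ of $\tilde G$ which stabilizes $\tilde T_0$ and sends $\tilde P_0$ to its opposite parabolic, and set $\tilde K:=\tilde G^{\tilde\theta_c}$; since $\tilde\theta_c$ preserves the derived subgroup $\mathbf G$ it restricts to a Cartan involution of $G$, so $K:=\tilde K\cap G=G^{\tilde\theta_c}$ is maximal compact. In the $p$-adic case I choose a vertex $x$ in the apartment of $\tilde T_0$ in the Bruhat--Tits building $\mathcal B(\tilde G)$ which is (hyper)special simultaneously in $\mathcal B(\tilde G)$, in $\mathcal B(G)$, and in $\mathcal B(M)$; such a vertex exists because the apartment of $\tilde T_0$ projects onto the apartments of the corresponding split tori in $G$ and $M$, and the preimage of the set of special vertices is a nonempty union of affine cosets of a lattice. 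I then take $\tilde K:=\mathrm{Stab}_{\tilde G}(x)$, so that $K=\mathrm{Stab}_G(x)=\tilde K\cap G$.

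Finally, the good position of $\tilde K$ with respect to $(\tilde P,\tilde M)$, and that of $K$ with respect to $(P,M)$, reduce by standard Iwasawa decomposition arguments to the corresponding property for the minimal parabolic datum. The factorization $P\cap K=(M\cap K)(U\cap K)$ then follows by intersecting $\tilde P\cap\tilde K=(\tilde M\cap\tilde K)(\tilde U\cap\tilde K)$ with $G$: if $mu\in G$ with $m\in\tilde M\cap\tilde K$ and $u\in\tilde U\cap\tilde K$, then $u\in U\subseteq G$ forces $m\in G$, hence $m\in M\cap K$ and $u\in U\cap K$. The one genuinely delicate point, which I view as the main obstacle, is the verification that $M\cap K$ is truly maximal compact in $M$ (and not merely compact): in the Archimedean case this is immediate because $\tilde\theta_c|_M$ is a Cartan involution of $M$; in the $p$-adic case this is exactly what the triple specialness of $x$ guarantees, and arranging it is the Bruhat--Tits-theoretic input that drives the whole argument.
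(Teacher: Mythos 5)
Your proof is correct and follows essentially the same route as the paper's: a Cartan involution stabilizing a maximal split torus (hence restricting to one on the derived group) in the Archimedean case, and the stabilizer of a special vertex in the apartment of $\tilde{T}_0$ in the $p$-adic case, where the paper invokes the canonical identification of the buildings of $\tilde{G}$ and $G$ and cites Kaletha--Prasad and Prasad's descent for the good-position properties that you instead verify by hand. The only caveat, present in both treatments, is that in the $p$-adic case one should take the stabilizer of a point of the extended building (or otherwise cut down by the split center) so that $\tilde{K}$ is genuinely compact when $A_{\tilde{G}}$ is noncompact.
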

\begin{proof}
When $F=\BR$, we choose $\tilde{T_0}$ a maximal split torus of $\tilde{G}$ stable under a Cartan involution $\theta_c$. Then $\theta_c$ restricts to $G$ as a Cartan involution fixing the maximal split torus $T_0$ of $G$ contained in $\tilde{T_0}$. Recall that we have observed in Remark \ref{rem indep} that $\theta_c$ acts as the inversion on $\tilde{T_0}$ hence exchanges $\widetilde{U_\alpha}=U_\alpha$ with $\widetilde{U_{-\alpha}}=U_{-\alpha}$ for any root of $T_0$ in the Lie algebra of $G$. We choose $P_0$ a minimal parabolic subgroup containing $T_0$. Then the maximal compact subgroup $\tilde{K}$ of $\tilde{G}$ fixed by $\theta_c$ is in good position with respect to any $(\tilde{P},\tilde{M})$ such that both $\tilde{P}$ and $\tilde{M}$ are semi-standard, and $K$ is also in good position with respect to $(P,M)$. Furthermore we may assume, after $\widetilde{G}$-conjugacy, that our original pairs $(\tilde{P},\tilde{M})$ and $(P,M)$ are semi-standard. 
When $F$ is $p$-adic, we fix $\tilde{T_0}$ and $T_0$ as in the real case. The Bruhat-Tits buildings $B_{\tilde{G}}$ of $\widetilde{G}$ and $B_G$ of $G$ canonically identify (see \cite[Section 4.1]{KP}). If we take $(\tilde{P},\tilde{M})$ semi-standard with respect to $\tilde{T_0}$, so that $(P,M)$ is semi-standard with respect to $\tilde{T_0}$, a special vertex $x$ in the apartment of $B_G$ corresponding to $T_0$, and $\tilde{x}$ the special vertex in the apartment of $B_G$ corresponding to $T_0$, which identifies with $x$, then the stabilizer $\tilde{K}$ of $\tilde{x}$ satisifies the expected properties with respect to $(\tilde{P},\tilde{M})$ and $(P,M)$: this follows from \cite[Section 3.11]{Pudesc} and \cite[Theorem 5.3.4]{KP}. Again, after $\widetilde{G}$-conjugacy, this gives the desired result. 
\end{proof}

We recall that there is a canonical identification $\frak{a}_{\tilde{M}}^*\simeq \frak{a}_M^*\oplus \frak{a}_{A_{\tilde{M}}}^*$. Now fix $\sigma$ a finite length representation of $M$, and assume that there exists $\tilde{\sigma}$ a finite length representation of $\tilde{M}$ such that $\sigma=\tilde{\sigma}_{|M}$. For $\us\in\frak{a}_M^*$, the restriction of functions to $G$ induces a $G$-module surjection 
from $I_{\tilde{P}}^{\tilde{G}}(\tilde{\sigma} [\us])$ to $I_P^G(\sigma[\us])$. Moreover we have the following obvious fact. 

\begin{fct}\label{fact hol sec}
Fix $\tilde{K}$ as in Lemma \ref{lm double good position}, and let $\sigma$ and $\tilde{\sigma}$ be as above. Then the restriction to $G$ map from $I_{\tilde{P}}^{\tilde{G}}(\tilde{\sigma} [\us])$ to $I_P^G(\sigma[\us])$ sends holomorphic sections with respect to $\tilde{K}$ to holomorphic sections with respect to $K$, in a surjective manner. 
\end{fct}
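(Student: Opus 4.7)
The plan is to observe that a holomorphic section of $I_P^G(\sigma[\us])$ (resp.\ of $I_{\tilde P}^{\tilde G}(\tilde\sigma[\us])$) with respect to $K$ (resp.\ $\tilde K$) is nothing but an element $f \in I_P^G(\sigma)$ (resp.\ $\tilde f \in I_{\tilde P}^{\tilde G}(\tilde\sigma)$) multiplied by the scalar function $\eta_{\us}$ (resp.\ $\tilde\eta_{\us}$). The statement therefore reduces to the pointwise identity $\tilde\eta_{\us}|_G = \eta_{\us}$, combined with the surjectivity of the restriction $I_{\tilde P}^{\tilde G}(\tilde\sigma) \twoheadrightarrow I_P^G(\sigma)$ at $\us = \underline{0}$ which is asserted in the paragraph just preceding the fact.

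First I would establish $\tilde\eta_{\us}|_G = \eta_{\us}$. This is precisely where the choice of $\tilde K$ provided by Lemma \ref{lm double good position} plays a role: if $g \in G$ is decomposed as $g = umk$ with $u \in U \subseteq \tilde U$, $m \in M \subseteq \tilde M$ and $k \in K \subseteq \tilde K$, then the same factorization is an Iwasawa decomposition of $g$ in $\tilde G = \tilde P \tilde K$. Viewing $\us \in \frak{a}_M^*$ as an element of $\frak{a}_{\tilde M}^*$ through the canonical splitting $\frak{a}_{\tilde M}^* \simeq \frak{a}_M^* \oplus \frak{a}_{A_{\tilde M}}^*$ recalled just before the statement, the unramified character $\chi_{\us}$ of $\tilde M$ restricts to $\chi_{\us}$ on $M$. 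Hence $\tilde\eta_{\us}(g) = \chi_{\us}(m) = \eta_{\us}(g)$.

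Next, for the forward direction, given $\tilde f \in I_{\tilde P}^{\tilde G}(\tilde\sigma)$ with associated $\tilde K$-holomorphic section $\tilde f_{\us} = \tilde\eta_{\us} \tilde f$, I would compute directly
\[
(\tilde f_{\us})|_G \;=\; (\tilde\eta_{\us})|_G \cdot (\tilde f|_G) \;=\; \eta_{\us} \cdot (\tilde f|_G) \;=\; (\tilde f|_G)_{\us},
\]
which is exactly the $K$-holomorphic section of $I_P^G(\sigma[\us])$ attached to $\tilde f|_G \in I_P^G(\sigma)$.

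For surjectivity, given an arbitrary $K$-holomorphic section $f_{\us}$ of $I_P^G(\sigma[\us])$, i.e.\ an $f \in I_P^G(\sigma)$, the surjectivity at $\us = \underline{0}$ yields a lift $\tilde f \in I_{\tilde P}^{\tilde G}(\tilde\sigma)$ with $\tilde f|_G = f$, and then the previous paragraph gives $(\tilde f_{\us})|_G = f_{\us}$. The only step requiring some care is the identification of the unramified character $\chi_{\us}$ on $\tilde M$ via the splitting of $\frak{a}_{\tilde M}^*$; the rest is formal.
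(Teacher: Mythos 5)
Your argument is correct and is exactly the routine verification the paper leaves implicit (the paper states this Fact without proof, calling it obvious). The two ingredients you isolate — that the choice of $\tilde{K}$ from Lemma \ref{lm double good position} makes an Iwasawa decomposition $g=umk$ in $G$ simultaneously an Iwasawa decomposition in $\tilde{G}$, so that $\tilde\eta_{\us}|_G=\eta_{\us}$ via the splitting $\frak{a}_{\tilde M}^*\simeq \frak{a}_M^*\oplus\frak{a}_{A_{\tilde M}}^*$, together with surjectivity of restriction at $\us=\underline 0$ — are precisely what is needed, so nothing is missing.
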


Now assume moreover that:
\begin{enumerate}
\item The involution $\theta$ of $G$ extends to an $F$-rational involution of $\tilde{G}$, still denoted $\theta$.
\item $(M,x_u)$ is a vertex (i.e., $\theta_u(M)=M$). 
\item There exist $\tilde{m}$ in $\tilde{M}$ and $\tilde{h}$ in $\tilde{H}=\tilde{G}^\theta$ in $\tilde{G}$ such that $\tilde{m}u\tilde{h}\in G$. 
\end{enumerate}

Then $(M,x_{\tilde{m}u\tilde{h}})$ is a vertex as well. 
Moreover \[\theta_{\tilde{m}u\tilde{h}}=(\theta_{u})_{\tilde{m}},\] 
\[\frak{a}_{M,\BC}^*(\theta_{\tilde{m}u\tilde{h}}),-1)=\frak{a}_{M,\BC}^*((\theta_{u})_{\tilde{m}},-1)=\frak{a}_{M,\BC}^*(\theta_u,-1),\] since 
$\tilde{M}$ acts trivially on $\frak{a}_{M,\BC}^*\subseteq \frak{a}_{\tilde{M},\BC}^*$, and 
\[M^{\theta_{\tilde{m}u\tilde{h}}}=\tilde{m} M^{\theta_u}\tilde{m}^{-1}.\] Finally, the map 
\[\ell\mapsto \ell_{\tilde{m}}:=\ell\circ \tilde{\sigma}(\tilde{m}^{-1})\] induces a linear isomorphism between 
$\Hom_{M^{\theta_u}}(\sigma,\BC)$ and $\Hom_{M^{\theta_{\tilde{m}u\tilde{h}}}}(\sigma,\BC)$. 
Here is the second main observation of this paragraph.

\begin{prop}\label{prop second orbit}
In the above situation, the local intertwining periods $J_{x_u,\sigma,\ell,\us}$ and $J_{x_{\tilde{m}u\tilde{h}},\sigma,\ell,\us}$ have a pole of the same order (in $\BZ$) at $\us=\underline{0}$. 
\end{prop}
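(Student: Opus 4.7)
The plan is to explicitly relate $J_{x_{u'},\sigma,\ell_{\tilde m},\us}(f_{\us})$ for $u' = \tilde m u \tilde h$ to $J_{x_u,\sigma,\ell,\us}$ evaluated on another holomorphic section $f'_{\us}$ depending holomorphically on $f_{\us}$, via a change of variable. The ratio will be holomorphic and nonvanishing at $\us = \underline{0}$, which yields the equality of pole orders. This is very much in the spirit of the argument proving Proposition \ref{prop GL conj in SL}.

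First I would invoke Lemma \ref{lm double good position} to fix $\tilde K$ in good position with respect to $(\tilde P,\tilde M)$ such that $K = \tilde K \cap G$ is in good position with respect to $(P,M)$, and use these to define holomorphic sections. Given a holomorphic section $f_{\us}$ of $I_P^G(\sigma[\us])$ with respect to $K$, lift it via Fact \ref{fact hol sec} to a holomorphic section $\tilde f_{\us}$ of $I_{\tilde P}^{\tilde G}(\tilde\sigma[\us])$ with respect to $\tilde K$. Then for $\re(\us)$ in the appropriate open cone, unfold
\begin{align*}
J_{x_{u'},\sigma,\ell_{\tilde m},\us}(f_{\us}) = \int_{u'^{-1}Pu' \cap H \backslash H} \ell\bigl(\tilde\sigma(\tilde m^{-1}) \tilde f_{\us}(\tilde m u \tilde h h)\bigr)\, dh,
\end{align*}
and apply the left $\tilde M$-equivariance $\tilde f_{\us}(\tilde m g) = \chi_{\us}(\tilde m)\delta_{\tilde P}^{1/2}(\tilde m)\tilde\sigma(\tilde m)\tilde f_{\us}(g)$ to pull $\tilde m$ out of the integral as a nonvanishing holomorphic factor, leaving the integrand $\ell(\tilde f_{\us}(u\tilde h h))$.

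Next, I would perform the change of variable $\phi: h \mapsto \tilde h h \tilde h^{-1}$, which is a topological group automorphism of $H = G \cap \tilde H$ since $\tilde h \in \tilde H$; because $\tilde m$ normalizes $\tilde P$ and hence $P$, one has $\phi(u'^{-1}Pu' \cap H) = u^{-1}Pu \cap H$, and $u\tilde h h = u\phi(h)\tilde h$. By uniqueness of invariant measures up to scalar, $\phi$ rescales the right-invariant quotient measure by some constant $J(\tilde h)$ independent of $\us$. The integral then becomes
\begin{align*}
J(\tilde h) \int_{u^{-1}Pu \cap H \backslash H} \ell\bigl(\tilde f_{\us}(u h' \tilde h)\bigr)\, dh'.
\end{align*}
The section $g \mapsto \tilde f_{\us}(g\tilde h)$ is holomorphic with respect to $\tilde h^{-1}\tilde K\tilde h$, and its restriction $f'_{\us}$ to $G$ is, by the argument of Section \ref{sec indep comp}, a holomorphic combination of holomorphic sections of $I_P^G(\sigma[\us])$ with respect to $K$. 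This identifies the remaining integral with $J_{x_u,\sigma,\ell,\us}(f'_{\us})$. Assembling the pieces,
\begin{align*}
J_{x_{u'},\sigma,\ell_{\tilde m},\us}(f_{\us}) = \chi_{\us}(\tilde m)\delta_{\tilde P}^{1/2}(\tilde m) J(\tilde h)\, J_{x_u,\sigma,\ell,\us}(f'_{\us}),
\end{align*}
whose prefactor is holomorphic and nonvanishing at $\us = \underline{0}$. Since $f_{\us} \mapsto f'_{\us}$ is invertible (run the same procedure backwards, i.e.\ swap the roles of $u$ and $u'$ using $\tilde m^{-1}$ and $\tilde h^{-1}$), the two families of intertwining periods have the same order of pole at $\us = \underline{0}$.

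The main technical point I expect to need care is the change-of-variable step: verifying that $\phi$ only contributes a $\us$-independent scalar $J(\tilde h)$. This follows from uniqueness of invariant measures on $u^{-1}Pu \cap H \backslash H$, but requires a moment of bookkeeping because $H$ need not be unimodular. One also needs to observe that $\chi_{\us}$ extends canonically to $\tilde M$ when $\us \in \frak{a}_{M,\BC}^*$, which is immediate from the decomposition $\frak{a}_{\tilde M}^* \simeq \frak{a}_M^* \oplus \frak{a}_{A_{\tilde M}}^*$ and the fact that $\tilde M$ acts trivially on $\frak{a}_{M,\BC}^*$.
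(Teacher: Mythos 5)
Your proposal is correct and follows essentially the same route as the paper's proof: lift to a holomorphic section of $I_{\tilde{P}}^{\tilde{G}}(\tilde{\sigma}[\us])$, use that $\tilde{m}$ normalizes $P$ and $\tilde{h}$ normalizes $H$ to rewrite the integral as $J_{x_u,\sigma,\ell,\us}(\rho(\tilde{h})\tilde{f}_{\us})$ up to a holomorphic nonvanishing prefactor, and conclude by invertibility of the section map. Your extra bookkeeping of the scalar $\chi_{\us}(\tilde{m})\delta_{\tilde{P}}^{1/2}(\tilde{m})$ and of the measure-change constant is a slight refinement of what the paper leaves implicit, but the argument is the same.
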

\begin{proof}
We observe that $\tilde{h}$ normalizes $H$ and that $\tilde{m}$ normalizes $P$. Then, if $\tilde{f}_{\us}$ is a holomorphic section of $I_{\tilde{P}}^{\tilde{G}}(\tilde{\sigma}[\us])$ for $\us$ in some open cone of $\frak{a}_M^*$, we have:
\begin{equation*}
	\begin{split}
&\int_{\tilde{h}^{-1}u^{-1}\tilde{m}^{-1}P\tilde{m}u\tilde{h}\cap H\backslash H} \ell_{\tilde{m}}(\tilde{f}_{\us}(\tilde{m}u\tilde{h}h))dh\\
&=\int_{\tilde{h}^{-1}u^{-1} P u\tilde{h}\cap H\backslash H} \ell (\tilde{f}_{\us}(u\tilde{h}h))dh\\
&=\int_{u^{-1}Pu \cap H\backslash H} \ell (\tilde{f}_{\us}(uh\tilde{h}))dh\\
&=\int_{u^{-1}Pu \cap H\backslash H} \ell(\rho(\tilde{h})\tilde{f}_{\us}(uh))dh,
 	\end{split}
\end{equation*}
i.e., 
\[J_{x_{\tilde{m}u\tilde{h}},\sigma,\ell_{\tilde{m}},\us} (\tilde{f}_{\us})=J_{x_u,\sigma ,\ell,\us} (\rho(\tilde{h})\tilde{f}_{\us}).\] The result follows. 
\end{proof}

\begin{rem}
The results of this section typically apply to $\mathbf{G}$ an inner form of $\SL_n$ contained in an inner form $\widetilde{\mathbf{G}}$ of $\GL_n$. We will only use it in Section \ref{sec SL} for $\mathbf{G}=\SL_2$.  
\end{rem}

\subsection{Intertwining periods and  transitivity of parabolic induction}\label{sec compat trans}

It has been used in many special cases, that intertwining periods are compatible with transitivity of parabolic induction (see for 
example \cite[Lemma 4.4]{FLO}), and this follows from a simple integration in stages. We give a general statement of this type here, which will be used in Section \ref{sec unitary}. We suppose that $P\subseteq Q$, and that $L$ is a Levi subgroup of $Q$ contained in $M$, and we write $P=MV$ ad $Q=LU$ for the Levi decompositions. In such a situation, according to \cite[V.3.13]{Renard} there is a canonical decomposition \begin{equation}\label{eq ren} \frak{a}_{M,\BC}^*=\frak{a}_{L,\BC}^*\oplus (\frak{a}_{L,\BC}^M)^*.\end{equation} For 
$\underline{s_1} \in \frak{a}_{L,\BC}^*$ and $\underline{s_2} \in (\frak{a}_{L,\BC}^M)^*$, we denote by 
\[\Gamma_{M,L}(\sigma,\underline{s_1},\underline{s_2}):I_P^G(\sigma[\underline{s_1} +\underline{s_2}]) \simeq I_Q^G(I_{P\cap L}^L(\sigma[\underline{s_2}])[\underline{s_1}])\] the canonical isomorphism, inverse to 
\[F\mapsto F(\ )(e_L).\] Note that if  $f_{\us_1 +\us_2}$ is a holomorphic section of $I_P^G(\sigma[\us_1 +\us_2])$ and if we moreover fix $\us_2$, then $\Gamma_{M,L}(\sigma,\underline{s_1},\underline{s_2})(f_{\us_1 +\us_2})$ is a holomorphic section of $I_Q^G(I_{P\cap L}^L(\sigma[\underline{s_2}])[\underline{s_1}])$.

\begin{prop}\label{prop compat trans}
 Let $u=u_1u_2\in G$, and assume that $u_1\in L$, $\theta_u(M)=M$, and $\theta_{u_2}(L)=L$.
In such a situation we have 
\[\frak{a}_{M,\BC}^*(\theta_u,-1)=\frak{a}_{L,\BC}^*(\theta_{u_2},-1)\oplus (\frak{a}_{L,\BC}^M)^*((\theta_{u_2})_{u_1},-1),\]
and for all $\underline{s_1} \in \frak{a}_L^*(\theta_{u_2},-1)$ and $\underline{s_2} \in (\frak{a}_L^M)^*((\theta_{u_2})_{u_1},-1)$, we have 
\[J_{x_u,\sigma,\ell,\us_1+\us_2}(f_{\us_1 +\us_2})=\]
\[J_{x_{u_2}, I_{P\cap L}^L(\sigma[\underline{s_2}]),(J_{x_{u_1},\sigma,\ell,\us_1}),\us_2}\circ \Gamma_{M,L}(\sigma,\underline{s_1},\underline{s_2})(f_{\us_1 +\us_2})\] whenever $f_{\us_1 +\us_2}$ is a holomorphic section of $I_P^G(\sigma[\us_1 +\us_2])$.
\end{prop}
\begin{proof}
Note that $\theta_u=(\theta_{u_2})_{u_1}$. The first equality then follows from the fact that $L$ acts trivially on $\frak{a}_{L,\BC}^*$. Now let $T_0$ be a maximal split torus contained in $M_0$ which is $\theta_u$-stable (\cite[Lemma 2.4]{HW}). We fix a minimal parabolic subgroup of $P_0$ of $P$ containing $M_0$, so that $P$ and $Q$ are standard with respect to $(P_0,M_0)$. We can now talk about the set $R(A_M,P)\subseteq \frak{a}_M^*$ of positive roots of $A_M$. Then we set $R(A_M,L)=(\frak{a}_L^M)^*\cap R(A_M,P)$, and $R(A_L,Q)\subseteq \frak{a}_L^*$ the projection on $\frak{a}_L^*$ of $R(A_M,P)-R(A_M,L)$ with respect to the canonical decomposition \eqref{eq ren}. Associated to these positive roots $\alpha$ are coroots $\alpha^\vee$. Let $c\in \BR$ and 
\[\mathcal{D}_{M,\theta_u}^G(c):=\{\us\in \frak{a}_M^*(\theta_u,-1), \ \langle \us, \alpha^\vee \rangle>0 \ \forall \ \alpha>0, \theta_u(\alpha)<0 \}\] be a cone of convergence of $J_{x_u,\sigma,\ell,\us}$ as defined in \cite[(3.2), p.14]{MOY}. Similarly define $\mathcal{D}_{L,\theta_{u_2}}^G(c)$ and $\mathcal{D}_{M,(\theta_{u_2})_{u_1}}^L(c)$. Again because $L$ acts trivially on $\frak{a}_L^*$, we see that 
\[\mathcal{D}_{L,\theta_{u_2}}^G(c)+ \mathcal{D}_{M,(\theta_{u_2})_{u_1}}^L(c)\subseteq \mathcal{D}_{M,\theta_u}^G(c).\] We are now ready for the integration in stage argument. One has a semi-direct product decomposition $P=(P\cap L)U$. 
Now by assumption on $\theta_{u_2}$ we have $Q^{\theta_{u_2}} =L^{\theta_{u_2}}U^{\theta_{u_2}}$ thanks to Lemma \ref{lem stable decomp}. In particular
\[u^{-1}Qu\cap H=u_2^{-1}Q u_2\cap H=u_2^{-1}Q^{\theta_{u_2}} u_2 =u_2^{-1}L^{\theta_{u_2}}U^{\theta_{u_2}}u_2.\] Moreover 
\[u^{-1}Pu\cap H=u_2^{-1}(u_1^{-1}Pu_1\cap G^{\theta_{u_2}})u_2=u_2^{-1}(u_1^{-1}Pu_1\cap Q^{\theta_{u_2}})u_2.\] Now 
\[u_1^{-1}Pu_1\cap Q^{\theta_{u_2}}=u_1^{-1} (P\cap L)U u_1\cap Q^{\theta_{u_2}}=u_1^{-1}(P\cap L) u_1 U\cap L^{\theta_{u_2}}U^{\theta_{u_2}} \]
\[=(u_1^{-1}(P\cap L) u_1)^{\theta_{u_2}}U^{\theta_{u_2}}.\]
In particular 
\[u^{-1}Pu\cap H\backslash u^{-1}Qu\cap H=u_2^{-1}(u_1^{-1}(P\cap L) u_1)^{\theta_{u_2}}u_2\backslash u_2^{-1}L^{\theta_{u_2}}u_2.\]

The second equality follows from the fact that for any function 
\[f=F(\ )(e_L)\in I_P^G(\sigma[\underline{s_1} +\underline{s_2}])\] with 
$\re(\us_1+\us_2)\in \mathcal{D}_{L,\theta_{u_2}}^G(c)+ \mathcal{D}_{M,(\theta_{u_2})_{u_1}}^L(c)$, the following integration in stages of absolutely convergent integrals holds if $c$ is chosen large enough:
\begin{equation*}
	\begin{split}
&\int_{u^{-1}Pu\cap H\backslash H}\ell(f(uh))dh\\
&=\int_{u^{-1}Qu\cap H\backslash H}\int_{u^{-1}Pu\cap H\backslash u^{-1}Qu\cap H}\ell(f(u_1u_2h'h))dh'dh \\
&=\int_{u^{-1}Qu\cap H\backslash H}\int_{u_2^{-1}(u_1^{-1}(P\cap L) u_1)^{\theta_{u_2}}u_2\backslash u_2^{-1}L^{\theta_{u_2}}u_2}\ell(f(u_1u_2h))dh'dh \\
&=\int_{u^{-1}Qu\cap H\backslash H}\int_{(u_1^{-1}(P\cap L) u_1)^{\theta_{u_2}} \backslash  L^{\theta_{u_2}}}\ell(f(u_1h'u_2h))dh'dh \\
&=\int_{u^{-1}Qu\cap H\backslash H}\int_{(u_1^{-1}(P\cap L) u_1)^{\theta_{u_2}} \backslash  L^{\theta_{u_2}}}\ell(F(u_2h)(u_1h'))dh'dh 
	\end{split}
\end{equation*}
\end{proof}

This result is used in Section \ref{sec unitary} to identify the normalized intertwining periods of \cite{FLO} to intertwining periods of interest to us in this paper. In the rest of this paper, verify that Conjecture \ref{conj main} holds in several examples. 

\section{The group case}\label{sec group case}

Here we consider pairs of the form $(G,\Delta(H))$ where $G:=H\times H$ for $H$ an $F$-reductive group, and where $\Delta:h\to (h,h)$ is the diagonal embedding. We identify $H$ and $\Delta(H)$ hoping that this will not create too much confusion. Hence $\theta(x,y)=(y,x)$. We prove that Conjecture \ref{conj main} holds in this case. 

Let $P$ and $P'$ be parabolic subgroups of $G$, and $M$ and $M'$ respective Levi subgroups. We fix a maximal split torus $T_0$ of $G$ which we take of the form $T_{0,H}\times T_{0,H}$ for $T_{0,H}$ a maximal split torus of $H$, and $P_0$ a minimal parabolic subgroup of $G$ containing it, which we take of the form $P_0=P_{0,H}\times P_{0,H}$ for $P_{0,H}$ a minimal parabolic subgroup of $H$. We denote by $W_H$ the Weyl group of $H$ with respect to $T_{0,H}$. Without loss of generality, thanks to Section \ref{sec indep conj1}, we assume that $P,\ P', \ M$ and $M'$ are standard for these choices. Later when we consider holomorphic sections, they will be chosen with respect to any maximal compact subgroup $K_H\times K_H$ of $G$, where $K_H$ is in good position with respect to $(P_0,M_0)$. It then follows from the proof of Lemma \ref{lm double good position} that $K_H$ is automatically in good position with respect to $(P,M)$ and $(P',M')$. In this situation $P=P_H\times P_H'$ for 
$P_H,\ P_H'$ standard parabolic subgroups of $G$, and $M=M_H\times M_H'$ for $M_H,\ M_H'$ the standard Levi subgroups of $P_H,\ P_H'$ respectively. We put 
\[T(M_H,M_H')=\{g\in G, \ M_H'=gM_Hg^{-1}\}.\] This set is clearly stable by left translation under $M_H$, and 
\[T(M_H,M_H')/M_H=W(M_H,M_H')/W_{M_H},\] where \[W(M_H,M_H')=W_H\cap T(M_H,M_H'),\] and $W_{M_H}$ is the Weyl group of $M_H$ with respect to $S_H$. 

The symmetric space 
\[X:=\{x\in G, \ \theta(x)=x^{-1}\}\] identifies with $H$ via the map \[h\in H\to (h^{-1},h)\in X,\] and also with $G/H$ via the map 
\[gH\in G/H\to g\theta(g)^{-1}\in X.\] 
In view of these identifications, and the discussion of admissible orbits in \cite[Section 3.7]{MO}, one can check that the map 
\[w\to u:=(e,w)\] is a bijection from the set $W(M_H,M_H')/W_{M_H}$ to the set of representatives $u\in PuH$ such that $x_u$ is $M$-admissible. Here $x_u=(w^{-1},w)$. Now fix $w\in W(M_H,M_H')/ W_{M_H}$, $x_u=(w^{-1},w)$, and let \[\sigma=\tau\otimes \tau'\] be an irreducible representation of $G\times G$. Then \[M^{\theta_u}=\{(m,wmw^{-1}),\ m\in M_H\}\] and \[\Hom_{M^{\theta_u}}(\sigma,\BC)\neq \{0\}\iff \tau'\simeq w(\tau)^\vee,\] where \[w(\tau)=\tau(w^{-1} \ \cdot \ w).\] Fix a unique up to nonzero scalar $w(M_H)$-module isomorphism \[U_w: \tau'\simeq w(\tau)^\vee.\]
Up to nonzero scalar, the only nonzero $\ell\in \Hom_{M^{\theta_u}}(\sigma,\BC)$ is given by 
\[\ell(v\otimes v')=\langle v, \ U_w(v') \rangle.\]

Furthermore observe that \[\frak{a}_{M,\BC}^*=\frak{a}_{M_H,\BC}^*\times \frak{a}_{M_H',\BC}^*\] and that 
the map $\us\to (\us,-w(\us))$ is an isomorphism between $\frak{a}_{M_H,\BC}^*$ and $\frak{a}_{M,\BC}^*(\theta_u,-1)$. 
Now we have the standard intertwining operator 
\[A(w,\tau,\us): I_{P_H}^H(\tau[\us])\to I_{P'_H}^H(w(\tau)[w(\us)]).\] (See for example \cite[Section 2.6]{MOY}.) Then through the identification \[I_P^G(\sigma[\us,-w(\us)])=I_{P_H}^H(\tau[\us])\otimes I_{P'_H}^H(\tau'[-w(\us)]),\] the intertwining period $J_{x_u,\sigma,\ell,\us}$ is given by 
\[J_{x_u,\sigma,\ell,\us}(f_{\us}\otimes f'_{-w(\us)})=\int_{P_H'\backslash H} \langle A(w,\tau,\us) f_{\us}(h), U_w(f'_{-w(\us)}(h)) \rangle dh.\] 

Hence in the group case, admissible intertwining periods are described explicitly by the above formula in terms of standard intertwining operators. So Conjecture \ref{conj main} boils down to a conjecture on intertwining operators and contribution of admissible orbits, as we further explain. 

Take $P$ as in the above discussion. We suppose that $\sigma$ is square-integrable, i.e., $\tau$ and $\tau'$ are square-integrable. To prove Conjecture \ref{conj main} we may assume that \[\Hom_H(I_P^G(\sigma),\BC)\neq \{0\}.\] 
This means that 
\[\Hom_H(I_{P_H}^H(\tau),I_{P'_H}^H(\tau')^\vee)\neq \{0\}.\] Because $I_{P'_H}^H(\tau')^\vee \simeq I_{P'_H}^H(\tau'^\vee)$, we deduce by \cite[VII.2.4 and VII.2.5]{Renard} in the $p$-adic case, and \cite[Theorem 14.1]{KZ2} attributed to Langlands in the real case, the existence of $w\in W_H$ such that $M'_H\simeq w(M_H)$ and $\tau'^\vee \simeq w(\tau)$. So we may assume that $\tau'^\vee = w(\tau)$. For each Weyl element $w'$ such that $w'(\tau)\simeq w(\tau)$ (so in particular $w'(M)=w(M)$), fix $T_{w',w}:w'(\tau)\simeq w(\tau)$ as $w(M_H)$-modules. 
Conjecture \ref{conj main}\eqref{conj a} then amounts to claim that the space 
\[\Hom_H(I_{P_H}^H(\tau),I_{P'_H}^H(w(\tau)))\] can be generated by regularized intertwining operators $T_{w',w} A^*(w',\tau,\us)$ for $w'$ varying in the set of all Weyl elements such that $w'(\tau)\simeq w(\tau)$, with 
\[T_{w',w} A^*(w',\tau,\us)(f_{\us})(h):= T_{w',w}(A^*(w',\tau,\us)(f_{\us})(h)),\] and where $A^*(w',\tau,\us)$ is the regularization with respect to some generic direction of the standard intertwining operator $A(w',\tau,\us)$. However by \cite[Theorem 2.1, Properties (R2) and (R4)]{Aweighted}, and because $\tau$ is unitary, it is always possible to normalize by a meromorphic function the intertwining operators $A(w',\tau,\us)$ above into normalized operators $N(w',\tau,\us)$ which are holomorphic and unitary at $\us=\underline{0}$, and such that 
\[N(w^{-1},w(\tau),\underline{0})^{-1}N(w',\tau,\underline{0})=N(w^{-1}w',\tau,\underline{0}).\] 
Again for each Weyl element $w_0$ such that $w_0(\tau)\simeq \tau$, fix $T_{w_0}:w_0(\tau)\simeq \tau$ as $M_H$-modules (note that we can take 
$T_{w_0}=T_{w',w}$ when $w_0=w^{-1}w'$). The above discussion shows that to prove Conjecture \ref{conj main}, it is sufficient to prove that the commuting algebra \[\Hom_H(I_{P_H}^H(\tau),I_{P_H}^H(\tau))\] is generated by the self-intertwining operators $T_{w_0} N(w_0,\tau,\underline{0})$. This is a well-known theorem of Harish-Chandra: we refer to \cite[13.6]{RRG2} or the original proof in \cite[Part IV]{HC}, and \cite[Theorem 5.5.3.2]{SilHA}. 
This actually proves Conjecture \ref{conj main} in the group case (in particular Conjecture \ref{conj main}\eqref{conj b}). 

\section{The Galois pair $(\GL_n(E),\U_n(E/F))$}\label{sec unitary}

This is the most interesting and substantial example. In this section $E/F$ is a quadratic extension of $p$-adic fields with Galois involution $\tau:x\to \bar{x}$. The group $G$ is $G:=\GL_n(E)$ and $J$ is a hermitian matrix in $G$. We have the unitary involution 
\[\theta:g\to J\overline{g}^{-T}J^{-1}\] associated to $J$, and we denote by $H$ the unitary group which is its fixed point group. Here $g^{-T}$ denotes the inverse of the transpose of $g$. We set $H^o:=\GL_n(F)$. The pair $(G,H)$ is not a Gelfand pair, but the multiplicities of tempered (and more generally generic) representations of such a pair are fully understood thanks to \cite{FLO} and its sequel \cite{BPunitary}, and provide at the same time evidence and inspiration for general conjectures of Prasad (\cite{Prel}). For this pair, the papers \cite{FLO} and \cite{BPunitary} essentially provide the proof of Conjecture \ref{conj main}. However it requires some treacherous navigation between various results of \cite{FLO}, to extract the statement of Conjecture \ref{conj main} from these sources. We devote the rest of this section to a detailed explanation.

We assume that $M$ and $P$ are standard with respect to the torus of diagonal matrices contained in the Borel subgroup of upper triangular matrices, and of type for $(n_1,\dots,n_r)$, where $(n_1,\dots,n_r)$ is a composition of $n$. Let $\sigma$ be a square-integrable representation of $M$, and write it under the form $\delta_1\otimes \dots \otimes \delta_t$ where each $\delta_i$ is a square-integrable representation of $\GL_{n_i}(E)$. By \cite[Theorem 0.2]{FLO}, if $\Hom_H(I_P^G(\sigma),\BC)$ is nonzero, then $I_P^G(\sigma)$, which is irreducible, is invariant under $\tau$. Thus, up to changing $M$ by a conjugate, we may assume that 
$t=r+2s$ with $r$ and $s$ in $\BN$, $(n_1,\dots,n_t)=(n_1,\dots,n_r, m_1,\dots, m_s,m_1,\dots,m_s)$ and that 
\[\sigma\simeq \delta_1\otimes \dots \otimes \delta_r \otimes \mu_1\otimes \dots \otimes \mu_s \otimes\mu_1^\tau \otimes \dots \otimes \mu_s^\tau,\] where the representations $\delta_i$ and $\mu_j$ are square-integrable, each $\delta_i$ is fixed by $\tau$, whereas no $\mu_j$ is. By \cite[Theorem 0.2]{FLO}, the assumption that $\Hom_H(I_P^G(\sigma),\BC)$ is not reduced to zero implies that $r\geq 1$ when $H$ is not quasi-split. We recall that $``\times"$ stands for the Bernstein-Zelevinsky product notation for normalized parabolic induction. Let $BC$ be the quadratic base change map, from the set of isomorphism classes of irreducible representations of $\GL_\star(F)$ to that of isomorphism classes of irreducible representations of $\GL_\star(E)$, defined in \cite{ACBC}. Let $\omega_{E/F}$ be the quadratic character attached to $E/F$ by local class field theory. According to \cite{ACBC}, for each $i$ and $j$, there are exactly two (isomorphism classes of) square-integrable representations, say $\delta_i^o$ and $\delta_i^o\cdot\omega_{E/F}\circ\det$,  which base change to $\delta_i$, whereas there is a unique square-integrable $\mu_j^o$ such that $BC(\mu_j^0)=\mu_j\times \mu_j^\tau$. We set 
$\delta:=\delta_1\otimes \dots \otimes \delta_r,$ $\mu=\mu_1\otimes \dots \otimes \mu_s,$ and $I(\mu):=\mu_1\times \dots \times \mu_s.$ 
We define $\delta^o$, $\tau^o$ and $I(\tau^o)$ similarly. By the well-known compatibility properties of base change and parabolic induction, we have 
\[\delta  \otimes I(\mu) \times I(\mu)^\tau \simeq BC(\delta^o\otimes I(\mu^o))\] whenever $BC(\delta_i^o)=\delta_i$ 
and $BC(\mu_j^o)=\mu_j\times \mu_j^\tau$. Moreover the $2^r$ preimages $\delta^o$ of $\delta$ under $BC$ provide all the $2^r$ preimages $\delta^o \otimes I(\mu^o)$ of $\delta \otimes I(\mu) \times I(\mu)^\tau $ under $BC$. We denote by $Q$ the standard parabolic subgroup of $G$ of type $(n_1,\dots,n_r,2m_1+\dots+2m_s)$, and $L$ its standard Levi subgroup. 

In \cite[p.224]{FLO}, for each preimage $\delta^o \otimes I(\mu^o)$ of $\delta \otimes I(\mu) \times I(\mu)^\tau$, a normalizing factor $\mathfrak{n}_{L^o}(\delta^o \otimes I(\mu^o),\us)$ is defined, with $\us\in \mathfrak{a}_{L,\BC}^*$. It is a certain quotient of Shahidi's local coefficients attached to $\delta \otimes I(\mu) \times I(\mu)^\tau$ and $\delta^o \otimes I(\mu^o)$. We write $L$ under the form $\diag(M',G'')$, where $M'$ is the standard Levi subgroup of $G':=\GL_{n_1+\dots+n_r}(E)$ of type 
$(n_1,\dots,n_r)$, and $G'':=\GL_{2m_1+\dots+2m_s}(E)$. At this point, and without loss of generality, we take the matrix $J$ under the form 
\[J:=\diag(x,I_{n_1+\dots+n_r-1},I_{m_1+\dots+m_s},-I_{m_1+\dots+m_s})\] with $x\in F^\times$. In particular when $n$ is even, the group $H$ is quasi-split if and only if $x$ is a norm of $E^\times$. 
We denote by $P'$ the standard parabolic subgroup of $G'$ with standard Levi subgroup $M'$, by $P''$ the parabolic subgroup of $G''$ of type $(m_1,\dots,m_s,m_1,\dots,m_s)$, and by $M''$ its standard Levi subgroup. We put $H':=G'^{\theta'}$ and $H''=G''^{\theta''}$, where both $\theta'$ and $\theta''$ are induced by $\theta$. Note that $H''$ is quasi-split, whereas $H'$ is quasi-split if and only if $H$ is. We fix a system of representatives $R(P'\backslash G'/H')$ as in \cite[Section 6.2, before Lemma 6.4]{FLO}, and denote by $R^0(P'\backslash G'/H')$ the subset of $R(P'\backslash G'/H')$ representating open orbits.
According to \cite[Section 6.2]{FLO}, the set $R^0(P'\backslash G'/H')$ has cardinality \[|R^0(P'\backslash G'/H')|=2^{r-1}.\] Moreover for each $u'\in R^0(P'\backslash G'/H')$, the group $M'^{\theta_{u'}}$ is the product of $r$ unitary groups. According to \cite[Section 6.2]{FLO} again, there exists a unique closed double coset in $P''\backslash G''/H''$, and fixing $\iota\in E-F$ with $\iota^2\in F$, we represent it by the matrix 
\[u_1'':=\begin{pmatrix} \iota I_{m_1+\dots+m_s}& -\iota I_{m_1+\dots+m_s} \\  I_{m_1+\dots+m_s} & I_{m_1+\dots+m_s}\end{pmatrix}.\] 
Setting $u''_2:=I_{2m_1+\dots+2m_s}$, there are $2^r$ open double cosets in $Q\backslash G/H$, which can be represented by the matrices $\diag(u',u_1'')$ and $\diag(u',u_2'')$, for $u'\in  R^0(P'\backslash G'/H')$. This gives us a set $R^0(Q\backslash G/H)$ naturally partitioned into two sets $R_1^0(Q\backslash G/H)$ and $R_2^0(Q\backslash G/H)$ of cardinality $2^{r-1}$. We observe that for any 
$u\in R^0(Q\backslash G/H)$, the space $\Hom_L^{\theta_u}(\delta\otimes I(\mu)\times I(\mu)^\tau,\BC)$ has dimension equal to one thanks to 
\cite[Theorem 0.2]{FLO}, and we fix $\ell_u$ a basis of it. To each preimage $\delta^o\otimes I(\mu^o)$, and each $u\in R^0(Q\backslash G/H)$, the authors of \cite{FLO} associate in \cite[p.224]{FLO} the normalized intertwining period 
\[\mathfrak{J}_{x_u,\delta\otimes I(\mu)\times I(\mu)^\tau,\ell_u,\us}^{\delta^o\otimes I(\mu^o)}:=
\mathfrak{n}_{L^o}(\delta^o \otimes I(\mu^o),\us)J_{x_u,\delta\otimes I(\mu)\times I(\mu)^\tau,\ell_u,\us}, \ \us\in \mathfrak{a}_L^*.\] 
By \cite[Theorem 12.4, (2)]{FLO}, these $4^r$ normalized intertwining periods are all holomorphic at $\us=\underline{0}$, and we observe that at most $2^r$ of the $\mathfrak{J}_{x_u,\delta\otimes I(\mu)\times I(\mu)^\tau,\ell_u,\underline{0}}^{\delta^o\otimes I(\mu^o)}$ are linearly independent for obvious reasons (indeed for a fixed $x_u$, all linear forms $\mathfrak{J}_{x_u,\delta\otimes I(\mu)\times I(\mu)^\tau,\ell_u,\underline{0}}^{\delta^o\otimes I(\mu^o)}$ live in a one dimensional vector space). In fact $2^r$ is not optimal. Indeed,
on one hand by \cite[Theorem 12.4, (2), (12.4)]{FLO} and \cite[Proposition 13.14]{FLO}, the subspace 
of $\Hom_H(I_P^G(\sigma),\BC)$ spanned by all the linear forms $\mathfrak{J}_{x_u,\delta\otimes I(\mu)\times I(\mu)^\tau,\ell_u,\underline{0}}^{\delta^o\otimes I(\mu^o)}$ has at least dimension $2^{r-1}$. On the other hand by \cite[Theorem 3]{BPunitary}, the space $\Hom_H(I_P^G(\sigma),\BC)$ has actually dimension $2^{r-1}$. The conclusion is that $\Hom_H(I_P^G(\sigma),\BC)$ is spanned by the normalized intertwining periods $\mathfrak{J}_{x_u,\delta\otimes I(\mu)\times I(\mu)^\tau,\ell_u,\underline{0}}^{\delta^o\otimes I(\mu^o)}$. Note that we are not done yet for two reasons. 
The first one is that these normalized intertwining periods are attached to $\delta\otimes I(\mu)\times I(\mu)^\tau$ instead of $\sigma$. In order to take care of this issue, we observe that each $\ell_u$ is of the form 
\[\ell_u=\ell_{u'}\otimes \ell_{u_i''}\] for $i\in \{1,2\}$, $\ell_{u'}\in \Hom_{M'^{\theta'_{u'}}}(\delta,\BC)$ and 
 $\ell_{u_i''}\in \Hom_{G''^{(\theta'')_{u_i''}}}(I(\mu)\times I(\mu)^\tau,\BC)$. Now by \cite[Theorem 0.2]{FLO}, the space $\Hom_{G''^{(\theta'')_{u_i''}}}(\delta,\BC)$ has dimension one. When $i=1$, then $(\theta'')_{u_1''}$ stabilizes $P''$ and $\Hom_{M''^{(\theta'')_{u_1''}}}(\mu\otimes \mu^\tau,\BC)$ has dimension one with generator $\lambda_1''$. It implies that $\Hom_{G''^{(\theta'')_{u_1''}}}(I(\mu)\times I(\mu)^\tau,\BC)$ is spanned by the closed intertwining period given by the compact integration 
\[f\to \int_{P''^{(\theta'')_{u_1''}}\backslash G''^{(\theta'')_{u_1''}}}\lambda_1''(f(h''))dh''.\] Similarly, $\Hom_{M''^{(\theta'')_{u_2''}}}(\mu\otimes \mu^\tau,\BC)$ is one dimensional and it is generated by a closed intertwining period \[f\to \int_{P''^{(\theta'')_{u'' u_2''}}\backslash G''^{(\theta'')_{u''u_2''}}}\lambda_2''(f(h''))dh''\] where $u''$ is a well-chosen representative of the closed double coset 
$P''\backslash G''/G''^{(\theta'')_{u_2''}}$. We conclude from 
Proposition \ref{prop compat trans} that $\Hom_H(I_P^G(\sigma),\BC)$ is spanned by normalized intertwining periods attached to $\sigma$. 

The second problem is that so far we only normalized some intertwining periods, enough so that the normalized intertwining periods generate $\Hom_H(I_P^G(\sigma),\BC)$. As the requirement of the conjecture towards normalization is not very strong, we can normalize all the others by multiplying them by zero, although there are less trivial ways of doing this.

\begin{rem}
Here is probably the clever way to normalize all intertwining periods, which is similar to the process that we used above to normalize some of them. Because standard intertwining operators can be ``canonically" normalized, we can use 
\cite[Propositions 4.8 and 5.1]{OJNT} to reduce the normalization problem to that of normalizing 
intertwining periods of the form $J_{x_u,\sigma,\ell ,\us}$, where $(M,x_u)$ is a maximal vertex as in \cite[Definition 4.6]{MOY}. But then 
by \cite[Equality (5.9), p.26 in the proof of Theorem 5.4]{MOY}, a maximal intertwining period can be obtained by integration in stages, first as a compact integration, and then as an ``open" integration. However both open and closed intertwining periods have been normalized in 
\cite[Sections 4 and 5]{FLO}, so this provides a general process of normalization. 
\end{rem}

\section{Multiplicity one examples related to $\GL_n$ and its inner forms}\label{sec inner GL}

All examples in this section are based on the following observation, which is straightforward from the content of Section \ref{sec desc}. 

\begin{lem}\label{lm mult one}
Let $G,H,P, M, u, \ x_u$ be as in Section \ref{sec desc}, and assume that $\sigma$ is of finite length. Suppose that $\Hom_H(I_P^G(\sigma),\BC)$ is of dimension one, and that there exists a unimodular vertex $(M,x_u)$ such that $\Hom_{M^{\theta_u}}(\sigma,\BC)$ is nonzero. Then \[\Hom_H(I_P^G(\sigma),\BC)=\Hom_H^*(I_P^G(\sigma),\BC).\]
\end{lem}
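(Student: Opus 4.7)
The plan is to produce a single nonzero regularized intertwining period inside $\Hom_H(I_P^G(\sigma),\BC)$; together with the dimension hypothesis and the inclusion $\Hom_H^*(I_P^G(\sigma),\BC)\subseteq \Hom_H(I_P^G(\sigma),\BC)$, this forces the two spaces to coincide. Thus the argument is nothing more than an invocation of the nonvanishing content of the regularization construction of \cite{MOY} recalled in Section \ref{sec desc}.

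First, the hypothesis supplies a nonzero $\ell\in \Hom_{M^{\theta_u}}(\sigma,\BC)$. Since $(M,x_u)$ is unimodular with respect to $P$, the discussion surrounding equation \eqref{eq reg} provides the meromorphic family $\us\mapsto J_{x_u,\sigma,\ell,\us}$ of $H$-invariant linear forms on $I_P^G(\sigma[\us])$, indexed by $\us\in \frak{a}_{M,\BC}^*(\theta_u,-1)$. The fact that $\ell\neq 0$ then guarantees, again from the content of \eqref{eq reg}, that for a generic direction $\us_0\in \frak{a}_{M,\BC}^*(\theta_u,-1)-\{0\}$ there exists an integer $k(\us_0)$ for which
\[
J_{x_u,\sigma,\ell}^{*,\us_0}\ :=\ \lim_{s\to 0} s^{k(\us_0)}\, J_{x_u,\sigma,\ell,\,s\,\us_0}
\]
is a nonzero element of $\Hom_H(I_P^G(\sigma),\BC)$.

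To fit this into the formal definition of regularized intertwining period given in Section \ref{sec desc}, I set $\frak{v}:=\frak{a}_{M,\BC}^*(\theta_u,-1)$, index $R_{\frak{v}}(P\backslash G/H)$ so that $u$ appears as some $u_i^{\frak{v}}$, and take $\ell_i^{\frak{v}}:=\ell$ together with $\ell_j^{\frak{v}}:=0$ for $j\neq i$; nothing in the definition forbids this choice. The resulting sum $J_{\frak{v},\sigma,\ell^{\frak{v}},\us}$ collapses to $J_{x_u,\sigma,\ell,\us}$, so that its regularization along $\us_0$ is precisely $J_{x_u,\sigma,\ell}^{*,\us_0}$ and hence a nonzero regularized intertwining period belonging to $\Hom_H^*(I_P^G(\sigma),\BC)$.

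Consequently, $\Hom_H^*(I_P^G(\sigma),\BC)$ is a nonzero subspace of the one-dimensional space $\Hom_H(I_P^G(\sigma),\BC)$, so the two must coincide. There is essentially no obstacle to the argument; the only subtle point worth flagging is the legitimacy of declaring $\ell_j^{\frak{v}}=0$ at all orbits other than that of $u$, which is precisely what allows a single-orbit intertwining period to qualify as a regularized intertwining period in the general sense of Section \ref{sec desc}.
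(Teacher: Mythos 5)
Your proposal is correct and is essentially the paper's own argument: the paper simply cites the proof of \cite[Theorem 5.4]{MOY}, whose content is exactly the nonvanishing of the regularized intertwining period $J_{x_u,\sigma,\ell}^{*,\us_0}$ attached to a nonzero $\ell$, after which one-dimensionality of $\Hom_H(I_P^G(\sigma),\BC)$ forces equality with $\Hom_H^*(I_P^G(\sigma),\BC)$. Your remark about taking $\ell_j^{\frak{v}}=0$ on the other orbits is a legitimate reading of the definition and causes no difficulty.
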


Now we just observe that Conjecture \ref{conj main}\eqref{conj a} holds on known multiplicity one examples. On these examples, one should be able to extract Conjecture \ref{conj main}\eqref{conj b} from \cite{MOYlg}, but we do not try it here.

\subsection{The Galois model of $\GL_n$ and its inner forms}\label{sec gal}

In this section $E/F$ is a quadratic extension, and $D$ is an $F$-division algebra of odd dimension $d^2$ over its center $F$. This latter restriction is not important but the classification results that we use are not yet written in full generality when $d$ is even. Under our restriction on $d$, when $F$ is archimedean, then $D=F=\BR$, $E=\BC$. For $m\geq 1$ we set $G=\GL_m(D\otimes_F E)$, $H=\GL_m(D)$, and observe that $D\otimes_F E$ is a division algebra again. Such a pair being a Galois pair, it is unimodular. Moreover it is a Gelfand pair according to \cite[Proposition 11]{Fli} and more generally from \cite[Corollary A2]{FH} when $F$ is $p$-adic, and \cite[Theorem 8.2.5]{AG} when $F=\BR$: 
$\Hom_H(\pi,\BC)$ is at most one dimensional whenever $\pi$ is irreducible. We denote by $P$ the upper block triangular standard parabolic subgroup of $G$ attached to a composition $(m_1,\dots,m_r)$ of $m$, so that its standard Levi subgroup $M$ is isomorphic to 
$\GL_{m_1}(D\otimes_F E)\times \dots \times \GL_{m_r}(D\otimes_F E)$. An irreducible square-integrable representation $\sigma$ of $M$ identifies with a (completed when $F=\BR$) tensor product $\delta_1\otimes \dots \otimes \delta_r$, where each $\delta_i$ is a square-integrable representation of $\GL_{m_i}(D\otimes_F E)$. In this situation it follows from \cite[Section 3]{Matringe-GaloisModel--Generic+Classification} or more generally from \cite[Section 5.2]{MatJFA} that if $x_u$ is $M$-admissible, then 
$\Hom_{M^{\theta_u}}(\sigma,\BC)$ is nonzero if and only if there exists an involution $\epsilon$ in the symmetric group $S_r$ such that 
\[\delta_{\epsilon(i)}\simeq ({\delta_i^{\theta}})^\vee\] for all $i=1,\dots,r$, and moreover $\Hom_{\GL_{m_i}(D)}(\delta_i,\BC)$ is nonzero whenever $\epsilon(i)=i$. Observe as well that when $\sigma$ is square-integrable, the representation $I_P^G(\sigma)$ is irreducible thanks to \cite{Z} and more generally from \cite[6.1 Proposition]{Tglna}, so it affords multiplicity at most one of $H$-invariant linear forms. 

Conjecture \ref{conj main}\eqref{conj a} then follows at once from Lemma \ref{lm mult one}, \cite[Proposition 5.3]{MatJFA} when $F$ is $p$-adic, and \cite[Theorem 1.2]{Kem} when $F=\BR$. 

When $d$ is even, up to some easy verifications to be done and that we now explain, the above result will still hold. We refer to \cite{Suzuki+quaternion} when $d=2$ and $F$ is $p$-adic for the classification of distinguished representations induced from square-integrable ones, so by the same arguments as above Conjecture \ref{conj main}\eqref{conj a} holds in this case. Moreover it is clear that the method there claims a classification of distinguished representations induced from square-integrable ones for a general even $d$, as the the double cosets $P\backslash G/H$ are paramatrized by the same sets as for $d=2$ (see \cite{MatZ}). As well in the Archimedean case, the technique of \cite{ST} together with \cite[Theorem 5.4]{MOY} would again provide the classification needed for the pair $(\GL_{2n}(\BC),\GL_n(\BH))$, in view of the double coset decription provided by \cite{MatZ}.

\subsection{The twisted linear model of inner forms of $\GL_n$}

In this section $E/F$ is a quadratic extension and $D$ is an $F$-division algebra of dimension $d^2$ over its center $F$. The group $G$ is $G=\GL_m(D)$, and we assume that $md$ is even so that $E$ embeds as an $F$-subalgebra in $\CM_m(D)$. We then set $H$ to be the subgroup of elements in $G$ centralizing $E$. The pair $(G,H)$ is a Gelfand pair according to \cite{Guo} and \cite{BM}, and it is unimodular as well, as was verified in \cite{BM}. Conjecture \ref{conj main} now follows, just as in Section \ref{sec gal}, from Lemma \ref{lm mult one}, \cite[Theorems 1.3]{SuzJNT} and \cite[Theorems 1.2]{ST}.

\begin{rem}\label{rem linear model}
The paper \cite{Sign} also obtains similar Archimedean and non Archimedean classification results for certain linear models of type $(n,n)$ and $(n,n+1)$. In these cases the pair $(G,H)$ is not unimodular anymore, though it is known to be a Gelfand pair, as well as a tempered pair. For the same reasons as for twisted linear models, Conjecture \ref{conj main}\eqref{conj a} holds in this case, using the more general intertwining periods referred to in Remark \ref{rem non unimodular case}. 
\end{rem}

\section{The geometric lemma and the support of regularized open intertwining periods}\label{sec geom lemma and supp}

In this section $F$ is $p$-adic. Our notations are as in Section \ref{sec desc}, but we suppose that $P$ is standard as well as $M$ (with respect to fixed choices of a maximal split torus $T_0$ and $P_0$ a minimal parabolic subgroup of $G$ containing it). We fix $\sigma$ a finite length representation of $M$. 

\subsection{The geometric lemma}\label{sec geom lemma}

We prove in some generality some simple results well-known to experts, which will allow us to deal with small rank examples in the next section, and has proven useful in many occasions before in the literature, for instance in \cite{FLO}, \cite{MatBF1} and \cite{MatZ}. It relies on the geometric lemma of Bernstein and Zelevinsky, which provides a filtration of $I_P^G(\sigma)$ into $H$-submodules given by conditions on the support of the functions in the induced representation with respect to the double cosets $PuH$.

By \cite[Section 1.5]{BZ1}, we can order the double cosets in $P \backslash G / H$ as $\{ P u_i H\}_{i=1}^N$ such that
	\begin{align*}
		Y_i = \cup_{j = 1}^i P u_j H
	\end{align*}
	is open in $G$ for all $i = 1, \cdots,N$. Let
	\begin{align*}
		V_i = \{  \varphi \in I_P^G(\sigma) \mid \textup{Supp}(\varphi)  \subset Y_i\}.
	\end{align*}
	By convention we set $V_0=\{0\}$. 
	 By \cite[Section 3]{OJNT} and \cite[Proposition 4.1]{OJNT}, each $u_i$ for $i=1,\dots,n$ can be suitably chosen such that if $x_i:=x_{u_i}$, there exists a $\theta_{u_i}$-stable standard Levi subgroup $M_i$ of a standard parabolic subgroup $P_i\subseteq P$ of $G$ which satisfies  
	\begin{align*}
		\Hom_H(V_i / V_{i-1}, \BC)  \cong  \Hom_{M_i^{\theta_{u_i}}} (r_{M_i,M}(\sigma),\delta_{x_i}),
	\end{align*}
	where $r_{M_i,M}$ stands for the normalized Jacquet functor. In particular if $\sigma$ is cuspidal, then $M_i=M$ for all $i$. We choose the $u_i$'s as above.
	
\subsection{The support of invariant linear forms}

 In this section we make the assumption that there exists $1\leq N_0 \leq N$ a natural number such that $P$ is $\theta_{u_j}$-split with respect to $M$ for $j\leq N_0$, but does not have this property for $j>N_0$.  
 
 \begin{rem}
 \begin{enumerate}
 	\item  According to Lemma \ref{lm adm open vs split}, the representatives $u_j$ for $j\leq N_0$ are exactly those such that $Pu_j H$ is open and $x_j$ is $M$-admissible. 
 	\item On the examples that we are familiar with, if $P$ is $\theta_{u_k}$-split for one $u_k$, then $P$ is $\theta_{u_j}$-split of all $u_j$ such that $Pu_jH$ is open, but we do not know if this is to be generally expected.
 	\end{enumerate}
 \end{rem}
 
 Now we introduce the following terminology following \cite{FLO}.  
 
 \begin{df}\label{dfsupp}
 	\begin{enumerate} 
 		\item \label{dfa} We say that $P u_i H$ contributes to the distinction of $I_P^G(\sigma)$ if $\Hom_H(V_i / V_{i-1}, \BC)\neq \{0\}$. 
 		\item \label{dfb} We say that $\Hom_H(I_P^G(\sigma),\BC))$ is supported on $Y_{N_0}$ if the only   $P u_i H$ contributing to the distinction of $I_P^G(\sigma)$ are such that $i\leq N_0$. 
 		\item \label{dfc} We say that $\Hom_H(I_P^G(\sigma),\BC))$ is supported on open orbits if the only double cosets $P u H$ contributing to the distinction of $I_P^G(\sigma)$ are open in $G$. 
 		\item \label{dfd} We say that an element $L\in \Hom_H(I_P^G(\sigma),\BC))$ has  support outside $Y_{N_0}$ if $L_{|V_{N_0}}\equiv 0$.
 	\end{enumerate}
 \end{df}
 
 To justify the terminology, we observe that if $\Hom_H(I_P^G(\sigma),\BC)$ is supported on $Y_{N_0}$, then $H$-invariant linear forms on $I_P^G(\sigma)$ are determined by their restriction to sections supported on $Y_{N_0}$. We can actually be more precise. Let's start with the following key observation.  

\begin{lem}\label{lm k positif}
	Let $1\leq i_0\leq N_0$, and suppose moreover that there exists \[\ell\in  \Hom_{M^{\theta_{u_{i_0}}}}(\sigma,\BC)-\{0\}.\] If $\us_0$ is a generic vector in $\frak{a}_{M,\BC}^*(\theta_{u_{i_0}},-1)-\{0\}$ such that one can define the regularized intertwining period $J_{x_{i_0},\sigma,\ell}^{*,\us_0}$, then the integer $k(\us_0)$ used to define this regularization in Equation \eqref{eq reg} is non negative. 
\end{lem}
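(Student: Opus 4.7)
The plan is to construct a single test function $f \in I_P^G(\sigma)$ for which $\us \mapsto J_{x_{i_0},\sigma,\ell,\us}(f_\us)$ extends to an entire function on $\frak{a}_{M,\BC}^*(\theta_{u_{i_0}},-1)$, taking a nonzero value at $\us = \underline{0}$. If this is achieved, then $s \mapsto J_{x_{i_0},\sigma,\ell,s\us_0}(f_{s\us_0})$ is entire and nonzero at $s = 0$ along the chosen direction $\us_0$; in particular the meromorphic family $s \mapsto J_{x_{i_0},\sigma,\ell,s\us_0}$ does not vanish at $s = 0$ as a family of linear forms, which by the definition in Equation \eqref{eq reg} forces $k(\us_0) \geq 0$.

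To set things up, I would first exploit the $\theta_{u_{i_0}}$-split hypothesis. Writing $P = MV$ with $V$ the unipotent radical, the condition $\theta_{u_{i_0}}(P) \cap P = M$ forces $V \cap \theta_{u_{i_0}}(V) = \{e\}$ and hence $V^{\theta_{u_{i_0}}} = \{e\}$, so Lemma \ref{lem stable decomp} gives $P^{\theta_{u_{i_0}}} = M^{\theta_{u_{i_0}}}$, and therefore $u_{i_0}^{-1}Pu_{i_0} \cap H = u_{i_0}^{-1}M^{\theta_{u_{i_0}}}u_{i_0}$. Since $Pu_{i_0}H$ is open in $G$, the induced map $u_{i_0}^{-1}M^{\theta_{u_{i_0}}}u_{i_0} \backslash H \to P \backslash G$, $h \mapsto Pu_{i_0}h$, is an open immersion. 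I would then choose a compact open neighborhood $\Omega$ of $e$ in $H$ injecting into the quotient, fix $v_0 \in \sigma$ with $\ell(v_0) \neq 0$, and build $f \in I_P^G(\sigma)$ supported in $Pu_{i_0}\Omega$ satisfying $f(u_{i_0}h) = \mathbf{1}_\Omega(h) v_0$; the injectivity of $\Omega$ in $P\backslash G$ makes the $P$-equivariant extension of $f$ unambiguous, and smoothness is automatic in the $p$-adic setting.

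For such $f$, writing the Iwasawa decomposition $u_{i_0}h = v(h)m(h)k(h)$ with $v(h) \in V$, $m(h) \in M$, $k(h) \in K$, we have $f_\us(u_{i_0}h) = \chi_\us(m(h)) f(u_{i_0}h)$, so for $\re(\us)$ in the convergence cone,
\[J_{x_{i_0},\sigma,\ell,\us}(f_\us) = \ell(v_0) \int_\Omega \chi_\us(m(h))\, dh,\]
the integration reducing to $\Omega$ by the injectivity. Since $\Omega$ is compact and $h \mapsto m(h)$ continuous, $(h, \us) \mapsto \chi_\us(m(h))$ is jointly continuous and entire in $\us$, so this integral defines an entire function of $\us$ with value $\ell(v_0) \cdot \mathrm{vol}(\Omega) \neq 0$ at $\us = \underline{0}$, giving the conclusion. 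The main point to check, although routine, is that $\Omega$ can be taken small enough to inject into $P\backslash G$ and that $f$ is then a genuine element of $I_P^G(\sigma)$; both follow from the openness of the orbit and the identification of the right $H$-stabilizer of $Pu_{i_0}$ as $u_{i_0}^{-1}M^{\theta_{u_{i_0}}}u_{i_0}$.
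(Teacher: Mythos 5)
Your proposal is correct and follows essentially the same route as the paper: the paper's proof consists precisely of the two observations that for sections supported on the open coset $Pu_{i_0}H$ the defining integral converges (for all $\us$, hence is entire), and that it is nonzero at $\us=\underline{0}$ for at least one such section, which rules out $k(\us_0)<0$. You have simply made the test section and the resulting computation $\ell(v_0)\int_\Omega\chi_{\us}(m(h))\,dh$ explicit, which is a routine but welcome elaboration.
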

\begin{proof}
	This follows from the following two facts:
	\begin{itemize}
		\item for $\varphi_{i_0}$ supported on $Pu_{i_0}H$, the intertwining period $J_{x_{i_0},\sigma,\ell,\us}(\varphi_{i_0,\us})$ is defined by convergent integrals,
		\item it is nonzero for at least one choice of $\varphi_{i_0}$. 
	\end{itemize}
\end{proof}

Lemma \ref{lm k positif} has the following consequence.

\begin{prop}\label{prop support}
	Let $1\leq i_0\leq N_0$, and suppose that there exists \[\ell\in  \Hom_{M^{\theta_{u_{i_0}}}}(\sigma,\BC)-\{0\}.\] Furthermore suppose that $J_{x_{i_0},\sigma,\ell,\us}$ is not holomorphic at $\us=\underline{0}$. If $\us_0$ is a generic vector in $\frak{a}_{M,\BC}^*(\theta_{u_{i_0}},-1)-\{0\}$ such that one can define the regularized intertwining period 
	$J_{x_{i_0},\sigma,\ell}^{*,\us_0}$, then $J_{x_{i_0},\sigma,\ell}^{*,\us_0}$ is supported outside $Y_{N_0}$ (see Definition \ref{dfsupp}, \ref{dfd}).
\end{prop}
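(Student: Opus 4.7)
The plan is to combine Lemma \ref{lm k positif} with an analysis of the integrand of $J_{x_{i_0},\sigma,\ell,\us}$ on sections supported on each open orbit. The first step is to observe that the regularization exponent $k(\us_0)$ appearing in \eqref{eq reg} must satisfy $k(\us_0) \geq 1$. Indeed Lemma \ref{lm k positif} already provides $k(\us_0) \geq 0$ thanks to the $\theta_{u_{i_0}}$-split hypothesis, and $k(\us_0) = 0$ would force $J_{x_{i_0},\sigma,\ell}^{*,\us_0}$ to coincide with the value $J_{x_{i_0},\sigma,\ell,\underline{0}}$, contradicting the assumption that $J_{x_{i_0},\sigma,\ell,\us}$ is not holomorphic at $\us = \underline{0}$.

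Next I would use the fact that the open double cosets $Pu_1H, \ldots, Pu_{N_0}H$ are pairwise disjoint open (and hence closed) subsets of the open set $Y_{N_0}$. Multiplication by characteristic functions of these orbits yields a direct sum decomposition $V_{N_0} = W_1 \oplus \cdots \oplus W_{N_0}$, where $W_j$ is the subspace of sections supported on $Pu_j H$. It then suffices to prove that $J_{x_{i_0},\sigma,\ell}^{*,\us_0}$ annihilates each $W_j$. For $j \neq i_0$, any $\varphi \in W_j$ satisfies $\varphi(u_{i_0}h) = 0$ for all $h \in H$, since $u_{i_0}H \cap P u_j H = \emptyset$; the integral defining $J_{x_{i_0},\sigma,\ell,\us}(\varphi_{\us})$ is therefore identically zero in $\us$, and so is its regularization. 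For $j = i_0$, the orbit map $h \mapsto u_{i_0}hP$ is a homeomorphism of $u_{i_0}^{-1}Pu_{i_0} \cap H \backslash H$ onto the open orbit $Pu_{i_0}H/P$; hence a section $\varphi \in W_{i_0}$, being compactly supported modulo $P$ inside $Pu_{i_0}H$, produces an integrand with compact support on the quotient, making $\us \mapsto J_{x_{i_0},\sigma,\ell,\us}(\varphi_{\us})$ an entire function. Multiplying by $s^{k(\us_0)}$ with $k(\us_0) \geq 1$ and letting $s \to 0$ then yields $0$.

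The only delicate verification is the compactness of support in the case $j = i_0$, but this is standard and follows directly from the open-orbit structure of $Pu_{i_0}H/P$ inside $G/P$. I therefore do not anticipate any serious obstacle in executing this plan; the proof is essentially a two-line computation once the positivity of $k(\us_0)$ and the direct sum decomposition of $V_{N_0}$ are in hand.
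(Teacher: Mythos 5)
Your proposal is correct and follows essentially the same route as the paper's proof: you deduce $k(\us_0)\geq 1$ from Lemma \ref{lm k positif} together with the non-holomorphy hypothesis (both arguments tacitly use the genericity of $\us_0$ in \eqref{eq reg} to pass from non-holomorphy at $\underline{0}$ to a pole along the line $s\mapsto s\us_0$), then kill sections supported on the other open orbits because the integrand vanishes, and kill sections supported on $Pu_{i_0}H$ because there the integral is entire and gets multiplied by $s^{k(\us_0)}$. The only difference is cosmetic: you spell out the direct-sum decomposition of $V_{N_0}$ and the compactness-of-support argument that the paper leaves implicit in the first bullet of the proof of Lemma \ref{lm k positif}.
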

\begin{proof}
	Let $Pu_iH$ be double coset with $1\leq i_0\leq N_0$, and take $\varphi_i \in I_P^G(\sigma)$  supported on $Pu_iH$. Suppose first that $i\neq i_0$. Then by definition of the integral defining 
	$J_{x_{i_0},\sigma,\ell,\us}$, one has 
	$J_{x_{i_0},\sigma,\ell,\us}(\varphi_{i,\us})\equiv 0$ hence in particular $J_{x_{i_0},\sigma,\ell}^{*,\us_0}(\varphi_i)= 0.$ Now  as $J_{x_{i_0},\sigma,\ell,\us}$ is not holomorphic at $\us=\underline{0}$ by assumption, this forces the integer $k(\us_0)$ to be positive according to Lemma \ref{lm k positif}. But then $J_{x_{i_0},\sigma,\ell,\us}(\varphi_{i_0,\us})$ being holomorphic, this implies that $J_{x_{i_0},\sigma,\ell}^{*,\us_0}(\varphi_{i_0})= 0$.
\end{proof}
	
	 The following theorem, which generalizes \cite[(6.6) and Lemma 6.7]{FLO}, but the proof of which is the same as in [ibid.], makes the situation very precise. 
	
	\begin{thm}\label{thm: open supp}
	If $\Hom_H(I_P^G(\sigma),\BC))$ is supported on $Y_{N_0}$ (see Definition \ref{dfsupp}, \ref{dfb}), then the restriction map 
	$L\to L_{|V_{N_0}}$ provides a vector space isomorphism 
	 \[\Hom_H(I_P^G(\sigma),\BC))\simeq \Hom_H(V_{N_0},\BC).\] Moreover for $i=1,\dots,N_0$ and $\ell_i\in \Hom_{M^{\theta_{u_i}}} (\sigma,\BC)$, the open intertwining period $J_{x_i,\sigma,\ell_i,\us}$ is holomorphic at $\us=\underline{0}$, and the map \[(\ell_1,\dots,\ell_{N_0})\to \sum_{i=1}^{N_0} J_{x_i,\sigma,\ell_i,\underline{0}}\] provides an isomorphism 
	 \[\prod_{i=1}^{N_0}\Hom_{M^{\theta_{u_i}}} (\sigma,\BC)\simeq \Hom_H(I_P^G(\sigma),\BC)).\]
	\end{thm}
	\begin{proof}
		For $i=1,\dots, N_0$, let us set $
			W_i = \{  \varphi \in I_P^G(\sigma) \mid \textup{Supp}(\varphi)  \subset P u_i H\},$ so that 
		$V_{N_0}=\oplus_{i=1}^{N_0}W_i.$ The injectivity of the restriction map $L\to L_{|V_{N_0}}$ follows from the assumption that $\Hom_H(I_P^G(\sigma),\BC))$ is supported on open double cosets. Now for $i=1,\dots,N_0$ and $\ell_i\in \Hom_{M^{\theta_{u_i}}} (\sigma,\BC)$, the open period  $J_{x_i,\sigma,\ell_i,\us}$ is holomorphic at $\us=\underline{0}$  thanks to Proposition \ref{prop support}. But by an explicit form of Frobenius reciprocity, the map $\ell_i\to (J_{x_i,\sigma,\ell_i,\underline{0}})_{|W_i}$ provides an isomorphism between $\Hom_{M^{\theta_{u_i}}}(\sigma,\BC)$ and $\Hom_H(W_i,\BC)$.  Hence, because $(J_{x_i,\sigma,\ell_i,\underline{0}})_{|W_j}$ vanishes if $1\leq i\neq j\leq N_0$ as already observed in the proof of Proposition \ref{prop support}, the map  
		$(\ell_1,\dots,\ell_{N_0})\to (\sum_{i=1}^{N_0} J_{x_i,\sigma,\ell_i,\underline{0}})_{|V_{N_0}}$ is an isomorphism from $\prod_{i=1}^{N_0}\Hom_{M^{\theta_{u_i}}}(\sigma,\BC)$ to $\Hom_H(V_{N_0},\BC)\simeq\prod_{i=1}^{N_0}\Hom_H(W_i,\BC)$. The remaining claims in the statement follow from this observation.
	\end{proof}
	
	An immediate corollary of Theorem \ref{thm: open supp} is the following.
	
	\begin{cor}\label{cor open support cusp}
		Let $(G,H)$ be unimodular. Assume that $\sigma$ is cuspidal of finite length, and that $\Hom_H(I_P^G(\sigma),\BC))$ is supported on open double cosets (see Definition \ref{dfsupp}, \ref{dfc}). Then $\Hom_H(I_P^G(\sigma),\BC)=\Hom_H^*(I_P^G(\sigma),\BC)$.
	\end{cor}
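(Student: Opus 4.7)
The plan is to combine three ingredients: the restriction isomorphism of Lemma \ref{lm open supp}, the direct sum decomposition of $V_{N_0}$ coming from the disjointness of open orbits, and Corollary \ref{cor open support} which guarantees holomorphy at $\us=\underline{0}$ of open intertwining periods.

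First, I will apply Lemma \ref{lm open supp} to reduce the question to describing $\Hom_H(V_{N_0},\BC)$, where $V_{N_0}=\oplus_{j=1}^{N_0}V_j$ in the notation of Section \ref{sec geom lemma}. Since $Y_{N_0}=\coprod_{i=1}^{N_0}Pu_iH$ is a disjoint union of open subsets of $G$, the $H$-module $V_{N_0}$ is a genuine direct sum $\bigoplus_{i=1}^{N_0}W_i$, with $W_i:=V_i/V_{i-1}$ the space of sections supported exactly on $Pu_iH$. For each open orbit, the vertex $(M,x_{u_i})$ is such that $P$ is $\theta_{u_i}$-split with respect to $M$, so by unimodularity of $(G,H)$ and Lemma \ref{lm ex mod}\eqref{B}, the character $\delta_{x_i}$ is trivial on $M^{\theta_{u_i}}$. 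Combined with the cuspidality of $\sigma$ and the geometric lemma of Section \ref{sec geom lemma}, this yields $\Hom_H(W_i,\BC)\simeq\Hom_{M^{\theta_{u_i}}}(\sigma,\BC)$, and therefore
\[\Hom_H(I_P^G(\sigma),\BC)\simeq\bigoplus_{i=1}^{N_0}\Hom_{M^{\theta_{u_i}}}(\sigma,\BC).\]

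Next, I will realize each summand on the right by an open intertwining period. For $\ell\in\Hom_{M^{\theta_{u_i}}}(\sigma,\BC)$, Corollary \ref{cor open support} yields holomorphy of $J_{x_i,\sigma,\ell,\us}$ at $\us=\underline{0}$, so its value $J_{x_i,\sigma,\ell,\underline{0}}$ belongs to $\Hom_H(I_P^G(\sigma),\BC)$ and, when nonzero, is a regularized intertwining period (taking $k(\us_0)=0$ for every generic $\us_0$). To conclude it suffices to verify that the map $(\ell_i)_i\mapsto\sum_{i=1}^{N_0}J_{x_i,\sigma,\ell_i,\underline{0}}$ inverts the isomorphism of the previous paragraph. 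For $i\neq j$ and $\varphi\in W_j$, the integrand of $J_{x_i,\sigma,\ell_i,\underline{0}}(\varphi)$ vanishes identically because the orbits $Pu_iH$ and $Pu_jH$ are disjoint, forcing $\varphi(u_ih)=0$ for all $h\in H$; on the diagonal ($i=j$), the convergent integral defining $J_{x_i,\sigma,\ell,\underline{0}}$ on $W_i$ is, upon unwinding, the inverse of the Frobenius reciprocity isomorphism used above.

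The main step to verify is this diagonal identification, which ultimately amounts to aligning the normalizing factors in the geometric lemma (the character $\delta_{x_i}$ on $M^{\theta_{u_i}}$) and in the definition of $J_{x_i,\sigma,\ell,\us}$ (the factor $\delta_P^{1/2}$ from normalized induction); by Lemma \ref{lem stable decomp} and unimodularity, both reduce to the trivial character on $M^{\theta_{u_i}}$, so the bookkeeping is consistent. Granting this, every $H$-invariant form on $I_P^G(\sigma)$ is exhibited as a linear combination of open intertwining periods evaluated at $\us=\underline{0}$, proving the equality $\Hom_H(I_P^G(\sigma),\BC)=\Hom_H^*(I_P^G(\sigma),\BC)$.
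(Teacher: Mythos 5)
Your argument is correct and is essentially the paper's own proof, which simply invokes Lemma \ref{lm open supp}, the geometric lemma (noting that cuspidality forces $M_i=M$, so only orbits with $(M,x_{u_i})$ a vertex and $P$ being $\theta_{u_i}$-split contribute, with $\delta_{x_i}$ trivial by unimodularity), and Corollary \ref{cor open support}, exactly as in the proof of Theorem \ref{thm compact}. The only cosmetic imprecision is your phrase ``for each open orbit, the vertex $(M,x_{u_i})$\dots'': not every open orbit need yield an $M$-admissible point, but the non-admissible ones do not contribute for cuspidal $\sigma$, so nothing is lost.
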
	
	\begin{proof}
		 The cuspidality of $\sigma$ forces that only $M$-admissible orbits contribute to distinction, so we can apply Theorem \ref{thm: open supp}.
	\end{proof}
	
	In particular. 
	
\begin{cor}\label{cor compact}
Suppose that $H/A_G\cap H$ is compact, and that $\sigma$ is cuspidal of finite length. Then $\Hom_H(I_P^G(\sigma),\BC)=\Hom_H^*(I_P^G(\sigma),\BC)$. Moreover all nonzero intertwining periods are holomorphic and nonzero at $\us=\underline{0}$. 
\end{cor}
\begin{proof}
In this situation, all $(P,H)$-double cosets are closed and hence open.
\end{proof}

\section{Some symmetric pairs with $G$ of semi-simple split rank one}\label{sec small}

In this section $F$ is always $p$-adic. In many of the situations that we study in this section, the following arguments will be used. Let $(G,H)$ be a unimodular pair such that $G$ has semi-simple rank one, and set $G'=\mathbf{G}'(F)$ the $F$-points of the derived subgroup of $\mathbf{G}$. Let $P_0$ be a proper parabolic subgroup of $G$. 

Now we consider $u\in G$. Then $\theta_u(P_0)$ which is again a proper parabolic subgroup of $G$, is equal to $P_0$, or opposite to $P_0$. In the first case $P_0uH$ is closed in $G$ according to \cite[Proposition 13.3]{HW} whereas in the second case, $P_0uH$ is open in $G$ according to \cite[Proposition 13.4]{HW}. In other words the $(P_0,H)$-double cosets are either closed or open in the rank one case, and hence the intertwining periods of \cite{MOY} are well-defined even when $(G,H)$ is not unimodular.  

Now we suppose that $P_0uH$ is open, and set $M_0:=P_0\cap \theta_u(P_0)$. Set $M_0'=(\mathbf{M_0}\cap \mathbf{G}')(F)$. We recall the canonical decomposition 
\[\frak{a}_{M_0,\BC}^*=\frak{a}_{M_0',\BC}^*\oplus \frak{a}_{A_{M_0},\BC}^*,\] where $\frak{a}_{M_0'}^*$ is of dimension one and identified with $\BC$ by choosing the weight corresponding to $P_0$. Hence for $\sigma$ an admissible representation of $M_0$, we can consider holomorphic sections $f_s\in I_{P_0}^G(\sigma[s])$ for $s\in \frak{a}_{M_0',\BC}^*$. Moreover we have \[\frak{a}_{M_0,\BC}^*(\theta_u,-1)=
\frak{a}_{M_0',\BC}^*(\theta_u,-1)\oplus \frak{a}_{A_{M_0},\BC}^*(\theta_u,-1),\] where actually $\frak{a}_{M_0',\BC}^*(\theta_u,-1)=\frak{a}_{M_0',\BC}^*$ since if not, $\theta_u$ would act as the identity on $T_0$ a $\theta_u$-stable maximal torus of $M_0$ with respect to which $M_0$ and $P_0$ are standard, and $P_0$ would be $\theta_u$-stable. If $\sigma=\chi$ is a character of $M_0$, then $\Hom_{M_0^{\theta_{x_u}}}(\chi,\BC)$ is nonzero if and only if $\chi$ is trivial on $M_0^{\theta_{x_u}}$, and we can take $\ell$ to be the identity of $\BC$ as a generator of $\Hom_{M_0^{\theta_{x_u}}}(\chi,\BC)$. We can then consider open intertwining periods of the form $J_{x_u,\chi,s}$ for $s\in \frak{a}_{M_0',\BC}^*$, where we remove $\ell$ from the notation. Now we fix $M_0$ a $\theta$-stable Levi subgroup of $P_0$. Because $M_0$ is minimal, it follows from \cite[Section 3]{OJNT} that we can find a set of representatives $R(P_0\backslash G/H)=\{u_i,i=1,\dots,N\}$, such that $
\theta_{u_i}(M_0)=M_0$ for all $i=1,\dots,N$. We fix such a choice. Moreover as before we assume that $P u_i H$ is open for $i\leq N_0$ and that it is closed for $i>N_0$. 


\begin{prop}\label{prop y}
Suppose that $\Hom_H(V_i / V_{i-1}, \BC)=\{0\}$ for $i>N_0$ except for $i=N$ (for example if $P_0u_N H$ is the only closed $(P_0,H)$-double coset). Let $\chi$ be a character of $M_0$, such that for all  $i=1,\dots,r$ with $r\leq N_0$, $\chi$ is trivial on $M_0^{\theta_{u_i}}$. If each $J_{x_{u_i},\chi,s},\ i=1,\dots,r$ has a pole of order one at $s=0$, then there exists scalars $c_1,\dots,c_r \in \BC$, at least two of them which are nonzero, such that $\sum_{i=1}^r c_i J_{x_{u_i},\chi,s} $ is regular (and automatically nonzero) at $s=0$. 
\end{prop}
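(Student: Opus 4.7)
The plan is to extract the residues of the open intertwining periods at $s=0$, invoke Proposition \ref{prop support} to force them into a very small subspace of invariant forms, and conclude by linear algebra. First I would write, for each $i=1,\dots,r$,
\[J_{x_{u_i},\chi,s}=\frac{L_i}{s}+\widetilde{J}_i(s),\]
with nonzero residue $L_i\in\Hom_H(I_{P_0}^G(\chi),\BC)$ and $\widetilde{J}_i(s)$ holomorphic at $s=0$. The combination $\sum_{i=1}^r c_iJ_{x_{u_i},\chi,s}$ is regular at $s=0$ if and only if $\sum_i c_iL_i=0$, so the task is to produce a nontrivial such relation.

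Since $P_0u_iH$ is open, the parabolic $P_0$ is $\theta_{u_i}$-split with respect to $M_0$, so Proposition \ref{prop support} applies and forces $L_i|_{V_{N_0}}\equiv 0$; thus each $L_i$ descends to a linear form on $I_{P_0}^G(\chi)/V_{N_0}$. The geometric lemma filtration of this quotient by the $V_j/V_{j-1}$ for $j>N_0$, together with the hypothesis that $\Hom_H(V_j/V_{j-1},\BC)=0$ for $N_0<j<N$, yields the bound
\[\dim\Hom_H\!\bigl(I_{P_0}^G(\chi)/V_{N_0},\BC\bigr)\le\dim\Hom_H(V_N/V_{N-1},\BC).\]
In the rank-one setting $P_0$ is the only proper standard parabolic, so the geometric lemma identifies the right-hand side with $\Hom_{M_0^{\theta_{u_N}}}(\chi,\delta_{x_N})$, which is at most one-dimensional since $\chi$ and $\delta_{x_N}$ are characters.

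Consequently the nonzero elements $L_1,\dots,L_r$ all lie in a space of dimension at most one, hence are linearly dependent, and any nontrivial relation $\sum c_iL_i=0$ must involve at least two nonzero coefficients (otherwise a single $c_{i_0}L_{i_0}=0$ with $c_{i_0}\ne 0$ would force $L_{i_0}=0$, contradicting that the pole has exact order one). This yields the required scalars. For the ``automatically nonzero'' claim, I would pick $i_0$ with $c_{i_0}\ne 0$ and a test function $f\in I_{P_0}^G(\chi)$ supported on the open orbit $P_0u_{i_0}H$ with $\int_{u_{i_0}^{-1}P_0u_{i_0}\cap H\backslash H} f(u_{i_0}h)\,dh\ne 0$: for $j\ne i_0$ one has $J_{x_{u_j},\chi,s}(f_s)\equiv 0$ by disjoint supports, while $L_{i_0}(f)=0$ (as $f\in V_{N_0}$) makes $J_{x_{u_{i_0}},\chi,s}(f_s)$ holomorphic at $s=0$ with nonzero value $\int f(u_{i_0}h)\,dh$, so the full combination at $s=0$ equals $c_{i_0}\int f(u_{i_0}h)\,dh\ne 0$.

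The hard part will be pinning down the description of $\Hom_H(V_N/V_{N-1},\BC)$ precisely enough in the rank-one case to guarantee the dimension-at-most-one bound under only the stated hypotheses (in particular verifying that $\chi$ being a character descends through the geometric lemma contribution for the single closed orbit); once this is in place, the rest of the argument is formal linear algebra combined with the support statement of Proposition \ref{prop support} and a standard disjoint-support computation on test sections.
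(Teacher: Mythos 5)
Your argument is correct and follows essentially the same route as the paper's proof: apply Proposition \ref{prop support} to show the residues (the order-one regularizations) are supported on no open orbit, use the geometric lemma together with the hypothesis on the closed orbits to place them in a space of dimension at most one, and conclude by linear dependence plus the nonvanishing of each residue. You merely spell out details the paper leaves implicit (the one-dimensionality of $\Hom_H(V_N/V_{N-1},\BC)$ coming from $\chi$ being a character of the minimal Levi, and the disjoint-support test-function argument for the final nonvanishing), so no further comment is needed.
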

\begin{proof}
 Consider the regularizations $J_{x_{u_i},\chi}^*$ at $s=0$ with respect to a fixed nonzero $s_0\in \mathfrak{a}_{M_0',\BC}^*$. According to Proposition \ref{prop support}, none of them is supported on open orbits, so by the geometric lemma, they all live in a one dimensional space, hence there exists 
 scalars $c_1,\dots,c_r \in \BC$, at least two of them which are nonzero necessarily, such that $\sum_{i=1}^r c_i J_{x_{u_i},\chi}^*=0$. These are the desired scalars from the statement. 
\end{proof} 

In the remaining sections, we will apply the above observations to specific pairs where $\mathbb{G}$ is $\SL_2$, and we will perform some explicit computations inspired from \cite[Proposition B17]{FH}, in order to prove Conjecture \ref{conj main}. 

\subsection{Pairs with $\mathbb{G}=\SL_2$}
 
\subsubsection{The Galois case} \label{sec SL}
Let $E=F[\iota]$ be a quadratic extension of $F$ with $\iota^2\in F^\times\setminus (F^\times)^2$. In this section we consider the Galois pairs $(\SL_2(E),H)$ where $H$ is either conjugate to $\SL_2(F)$ or to $\SL_1(D)$, where $D$ is a quaternionic algebra over $F$ contained in $\CM_2(E)$. 

We discuss the case where $H$ is conjugate to $\SL_2(F)$, as Conjecture \ref{conj main} in the other case follows immediatly from Corollary  \ref{cor compact}. Our involution $\theta$ is induced from the Galois conjugation $z\to \overline{z}$ of $E/F$. Denote by $\omega_{E/F}$ the quadratic character of $F^\times$ associated to $E$ by the local class field theory. Let $\N_{E/F}$ be the norm map defined by $\N_{E/F}(e)=e\bar{e}$ for $e\in E$ with kernel $E^1$. Let $P=B$ be the upper triangular Borel subgroup of $\SL_2(E)$, and $M=T$ its diagonal torus, which we identify to $E^\times$ via the map $z\to \diag(z,z^{-1})$. Hence for $\chi$ a character of $E^\times$, we set $I(\chi):=I_B^G(\chi)$.

The following theorem is extracted from \cite{ADP03}, in view of Proposition \ref{prop GL conj in SL}. 

\begin{thm}\label{ap:mult}
	Suppose that $H$ is conjugate to $\SL_2(F)$.
	\begin{enumerate}
		\item If $\chi$ is trivial, then $\dim\Hom_H(I(\chi),\mathbb{C})=2$.
		\item If $\chi=\omega_{E'/F}\circ \N_{E/F}$ where $E'$ is a quadratic field extension of $F$
		different from $E$, then $\dim\Hom_H(I(\chi),\mathbb{C})=3$.
		\item If $\chi=\chi_F\circ \N_{E/F}$ with $\chi_F^2\neq\triv$, then $\dim\Hom_H(I(\chi),\mathbb{C})=2$.
		\item If $\chi|_{F^\times}=\triv$ while $\chi^2\neq\triv$, then $\dim\Hom_H(I(\chi),\mathbb{C})=1$.
	\end{enumerate}
Otherwise $\Hom_H(I(\chi),\triv)=\{0\}$.
\end{thm}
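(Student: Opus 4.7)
The plan is to combine Proposition~\ref{prop GL conj in SL} with a geometric-lemma analysis of the $B$-orbits on $G/H$, and to match the result against the dimension formulas of \cite{ADP03}.

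First, since any character $\chi$ of $T\simeq E^\times$ extends to a character $\tilde\chi$ of the diagonal torus of $\GL_2(E)$ (take for instance $\tilde\chi(\diag(a,b))=\chi(a/b)$, so that $I_{\SL_2(E)}(\chi)$ is the restriction of $I_{\GL_2(E)}(\tilde\chi)$ to $\SL_2(E)$), the last clause of Proposition~\ref{prop GL conj in SL} shows that $\dim\Hom_H(I(\chi),\BC)$ is constant as $H$ varies over $\GL_2(E)$-conjugates of $\SL_2(F)$. Thus we may assume $H=\SL_2(F)$.

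Second, I would identify the double coset space $B\backslash G/H$. Viewing $B\backslash G\simeq\BP^1(E)$, the $\SL_2(F)$-orbits are one closed orbit $\BP^1(F)$ together with two open orbits in $\BP^1(E)\setminus\BP^1(F)$, parameterized by $F^\times/N_{E/F}(E^\times)\simeq\BZ/2$ (since the $\GL_2(F)$-stabilizer of a point in $E\setminus F$ is isomorphic to $E^\times$ embedded via the regular representation, with image $N_{E/F}(E^\times)$ under $\det$). Choosing admissible representatives $u_0,u_1,u_2$ with $\theta_{u_i}(T)=T$, a direct computation using the description $x_u=u\theta(u)^{-1}\in N_G(T)$ shows that for both open representatives $\theta_{u_i}$ acts on $T\simeq E^\times$ as $z\mapsto\bar z^{-1}$, so $T^{\theta_{u_i}}=E^1:=\ker N_{E/F}$; while for the closed representative $\theta_{u_0}$ acts as $z\mapsto\bar z$, so $T^{\theta_{u_0}}=F^\times$. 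The modulus characters $\delta_{x_{u_i}}$ are trivial on these subgroups by unimodularity of the Galois pair (Lemma~\ref{lm ex mod}). Hence the geometric lemma provides a filtration whose graded pieces contribute in total
\[\dim\Hom_{E^1}(\chi,\BC)+\dim\Hom_{E^1}(\chi,\BC)+\dim\Hom_{F^\times}(\chi,\BC)=2\cdot\1_{\chi|_{E^1}=\triv}+\1_{\chi|_{F^\times}=\triv}\]
to the naive multiplicity. Checking the four cases of the theorem: this naive count is $3$ in (1) and (2), $2$ in (3), and $1$ in (4), the last using that $\chi|_{F^\times}=\triv$ together with $\chi^2\neq\triv$ forces $\chi|_{E^1}\neq\triv$ (else $\chi$ would be trivial on the index-$2$ subgroup $F^\times\cdot E^1$ of $E^\times$, hence of order at most $2$).

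Third, I would verify linear independence of the surviving contributions. For cases (2), (3), and (4), the closed-orbit linear form vanishes on sections supported on open orbits, and the two open-orbit forms are supported on different open orbits by the geometric filtration, so all contributions are independent and the counts match the theorem. The main obstacle is case (1), $\chi=\triv$, where the naive count is $3$ but the true multiplicity is $2$: here I expect the two open-orbit contributions $J_{x_{u_1},\triv,s}$ and $J_{x_{u_2},\triv,s}$ to become proportional at $s=0$, because the outer $\GL_2(F)$-action that swaps the two open $\SL_2(F)$-orbits fixes the trivial character and so identifies the two forms. This relation drops the count from $3$ to $2$, in agreement with the $\SL_2$-refinement of Flicker's formula established in \cite{ADP03}, to which the detailed verification can ultimately be referred.
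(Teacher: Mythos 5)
The paper itself does not prove this theorem: it is quoted from \cite{ADP03}, with Proposition \ref{prop GL conj in SL} serving only to transfer the statement to an arbitrary ($\GL_2(E)$-)conjugate of $\SL_2(F)$ --- which is exactly your first step. Your orbit analysis is correct (three $(B,H)$-cosets, one closed with $T^{\theta_{u}}\simeq F^\times$ and two open with $T^{\theta_{u_i}}\simeq E^1$, all modulus characters trivial by unimodularity), and the resulting count $2\cdot\1_{\chi|_{E^1}=\triv}+\1_{\chi|_{F^\times}=\triv}$ agrees case by case with the stated dimensions except in case (1). One small inaccuracy: $F^\times E^1$ has index $|\N_{E/F}(E^\times)/(F^\times)^2|$ in $E^\times$, which exceeds $2$ in residue characteristic $2$; your conclusion $\chi^2=\triv$ survives only because this quotient is always an elementary abelian $2$-group.

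The genuine gap is in your third step. The geometric lemma gives only the upper bound $\dim\Hom_H(I(\chi),\BC)\le\sum_i\dim\Hom_H(V_i/V_{i-1},\BC)$; equality requires that the restriction map from $\Hom_H(I(\chi),\BC)$ to the open-orbit contributions be surjective, i.e.\ that the open intertwining periods be holomorphic at $s=0$. You assert linear independence of the "surviving contributions" without establishing that they exist as functionals on all of $I(\chi)$ rather than merely on the graded pieces. In case (3) this can be repaired with Corollary \ref{cor open support}, since the closed orbit does not contribute there; in case (2) the closed orbit does contribute, and regularity of the open periods at $s=0$ is genuinely nontrivial --- the paper obtains it only after the theorem, by comparison with $\GL_2(E)$-periods and Tate integrals. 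In case (1) the mechanism you propose is wrong: the two open periods do not become proportional at $s=0$, they both acquire simple poles there (this is exactly what the paper proves afterwards and feeds into Proposition \ref{prop y} to prove Theorem \ref{SL_2}); what actually happens is that precisely one linear combination of them extends holomorphically, and showing that the extendable subspace of the two-dimensional open contribution is one-dimensional --- rather than zero- or two-dimensional, both of which are compatible with your outer-action symmetry argument --- is exactly the input that must come from \cite{ADP03}. Since the paper also simply cites \cite{ADP03}, deferring to it is acceptable, but the geometric-lemma discussion should then be framed as a consistency check rather than as a proof of cases (1)--(3).
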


We set $\tilde{G}=\GL_2(E)$. Thanks to Proposition \ref{prop GL conj in SL} again, in order to prove the conjecture, the choice of $H$ inside the $\tilde{G}$-conjugacy class of $\SL_2(F)$ does not matter. We choose 
\[H:=\left\{\begin{pmatrix}
	a&b\\\bar{b}&\bar{a}
\end{pmatrix}:a,b\in E\mbox{ and } \N_{E/F}(a)-\N_{E/F}(b)=1\right\}.\] This group is conjugate to the group $v_2\SL_2(F) v_2^{-1}$ where 
\[v_2:=\begin{pmatrix} 1 & -\iota \\ 1 & \iota \end{pmatrix}\in \tilde{G}.\] Actually we denote $\tilde{H}$ the subroup of $\tilde{G}$ given by the same matrices, but without restriction on the determinant, which is in fact $v_2\GL_2(F)v_2^{-1}$. We also denote by $\tilde{B}$ the Borel subgroup of upper triangular matrices, and by $\tilde{T}$ its diagonal torus. 

From \cite[Page 488]{lu2018pacific} and Section \ref{sec indep conj2}, there are three $(B,H)$-double cosets in $G$. The double coset decomposition is 
\[G=Bu_0H\sqcup B u_1 H \sqcup B u_2 H,\] where \[u_2=\diag(2\iota, 1 ) v_2^{-1}\] is such that $B u_2 H$ is the unique closed double coset, 
\[u_0=I_2,\]  and \[u_1=\diag(1,\epsilon) v_1\] where $\epsilon$ is an element in $F^\times\setminus \N_{E/F}(E^\times)$ and $v_1$ is a matrix $\begin{pmatrix}
	a&b\\\bar{b}&\bar{a}
\end{pmatrix}\in \tilde{G}$ such that $\N_{E/F}(a)-\N_{E/F}(b)=\epsilon^{-1}$. We observe that 
\[\tilde{G}=\tilde{B}u_0 \tilde{H} \sqcup \tilde{B} u_2 \tilde{H}= \tilde{B}u_1 \tilde{H} \sqcup \tilde{B} u_2 \tilde{H}.\]
Actually 
\[\tilde{B}u_0 \tilde{H}\cap G=\tilde{B}u_1 \tilde{H}\cap G=Bu_0H\sqcup B u_1 H \] and \[G\cap \tilde{B} u_2 \tilde{H}=B u_2 H .\]

We set $\tilde{H}^+$ to be the index $2$ subgroup of $\tilde{H}$ given by matrices with determinant in $\N_{E/F}(E^\times)$.

Note that 
\begin{equation} \label{eq dec a} \tilde{B} u_0 \tilde{H}=\tilde{B} u_0\tilde{H}^+\sqcup \tilde{B} u_1 \tilde{H}^+,\end{equation} 
\begin{equation} \label{eq dec b} u_0^{-1}\tilde{B} u_0\cap \tilde{H}^+\backslash \tilde{H}^+=u_0^{-1}B u_0\cap H\backslash H,\end{equation} 
\begin{equation} \label{eq dec c} u_1^{-1}\tilde{B} u_1\cap \tilde{H}^+\backslash \tilde{H}^+=u_1^{-1}B u_1\cap H \backslash H,\end{equation} 
and 
\begin{equation} \label{eq dec d} u_2^{-1}\tilde{B} u_2\cap \tilde{H} \backslash \tilde{H} =u_2^{-1}B u_2\cap H \backslash H,\end{equation} 

In the rest of this section, we assune that $\chi$ is of the form \[\chi=\eta\circ \N_{E/F}\] for $\eta$ a unitary character of $F^\times$, and we denote by $\eta$ again an extension of $\eta$ to $E^\times$ (which is unitary necessarily). We then define the character 
\[\tilde{\chi}:=\eta\otimes \overline{\eta}^{-1}\] of $\tilde{T}$, so that restriction of functions from 
$I(\tilde{\chi}[s]):=I_{\tilde{B}}^{\tilde{G}}(\tilde{\chi})$ to $G$ is a $G$-module isomorphism between $I(\tilde{\chi}[s])$ and $I(\chi[s])$. We moreover choose $K=\SL_2(\mathcal{O}_E)$ as a maximal compact subgroup of $G$, and $\tilde{K}$  as a maximal compact subgroup of $\tilde{G}$, and take flat sections of $I(\tilde{\chi}[s])$ with respect to $\tilde{K}$, so that their restriction to $G$ are flat sections of $I(\chi[s])$ with respect to $K$.  

Recall that $I(\tilde{\chi})$ is always distinguished by $\tilde{H}$, and that $\tilde{B} u_1 \tilde{H}$ contributes to its distinction if and only if $\eta$ is trivial on $F^\times$, which in particular implies that $\chi$ is trivial. In particular, from Theorem  \ref{thm: open supp}, we know that if $\eta$ is not trivial on $F^\times$, then $J_{x_{u_0},\tilde{\chi},s}$ is holomorphic at $s=0$. On the other hand, if $\eta$ is trivial on $F^\times$, it follows from \cite[Proposition 10.9]{MatJFA} that $J_{x_{u_0},\tilde{\chi},s}$ has a pole at $s=0$, and then from Equation \eqref{eq rel int conj} and \cite[Proposition 4.5]{MatZ} (see \cite[Lemma 27]{JLR} when $E/F$ is unramified) that this pole is of order one: indeed one can always majorize any holomorphic section of $I(\tilde{\chi}[s])$ by a positive multiple of the spherical section. 

\begin{concl*}
The open intertwining period $J_{x_{u_0},\tilde{\chi},s}$ on $I(\tilde{\chi}[s])$ is regular at $s=0$ except when $\eta$ is trivial on $F^\times$, in which case it has a pole of order one. 
\end{concl*}

Now it follows from Equations 
\eqref{eq dec a}, \eqref{eq dec b} and \eqref{eq dec c} that 

\begin{equation} \label{eq sum int} J_{x_{u_0},\tilde{\chi},s}(f_s)=J_{x_{u_0},\chi,s}(f_s)+J_{x_{u_1},\chi,s}(f_s). \end{equation}

Suppose that $J_{x_{u_0},\chi,s}(f_s)$ has a pole at $s=0$. Then its regularization at $s=0$ is supported on no open orbit, and depends only on $f_{|Bu_2H}$. Hence we may assume that $f_s$ is supported on $Bu_0H\sqcup Bu_2H$. Let $f_s$ be the holomorphic section of 
$I(\tilde{\chi}[s])$ which restricts to $f_s\in I(\chi[s])$. Then by Equation \eqref{eq sum int}, we have 
\[J_{x_{u_0},\tilde{\chi},s}(f_s)=J_{x_{u_0},\chi,s}(f_s).\]  This tells us two things:
\begin{enumerate}
\item if $J_{x_{u_0},\chi,s}$ has a pole at $s=0$, then $\chi$ is trivial;
\item moreover the order of the pole of $J_{x_{u_0},\chi,s}$ that has a pole at $s=0$ is at most equal to one. 
\end{enumerate}

 Actually we can claim the same for $J_{x_{u_1},\chi,s}$ thanks to Proposition \ref{prop second orbit} (or by the above argument). Conversely, if $\chi$ is trivial, we may always assume that $\eta$ is trivial on $F^\times$, and it follows from  Equation \eqref{eq sum int} again that either $J_{x_{u_0},\chi,s}$  or $J_{x_{u_1},\chi,s}$ has a pole at $s=0$, hence from Proposition \ref{prop second orbit} that they both do.

\begin{concl*}
Suppose that $\chi$ is trivial on $E^1=\ker \N_{E/F}$. For $i=0,1$, the open intertwining period $J_{x_{u_i},\chi,s}$ on $I(\chi[s])$ is regular at $s=0$ except when $\chi=\triv$, where it has a simple pole of order one. 
\end{concl*}

We are now ready to prove the following. 

\begin{thm}\label{SL_2}
Let $\chi$ be a character of $E^\times$. Then $\Hom_H(I(\chi),\BC)=\Hom_H^*(I(\chi),\BC)$ for $G=\SL_2(E)$ and $H=\SL_2(F)$ or $\SL_1(D)$. Moreover Conjecture \ref{conj main}\eqref{conj b} also  holds for these pairs. 
\end{thm}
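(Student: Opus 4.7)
When $H\simeq \SL_1(D)$, the quotient $H/(A_G\cap H)$ is compact, so Theorem~\ref{thm compact} applies directly: all open intertwining periods are holomorphic and nonzero at $s=0$, and both conclusions of the theorem follow with $\mathfrak{n}\equiv 1$. For the remainder I take $H\simeq \SL_2(F)$; by Proposition~\ref{prop GL conj in SL} I may fix the specific $H$ and representatives $u_0,u_1,u_2$ chosen in Section~\ref{sec SL}, with $Bu_2H$ the unique closed $(B,H)$-double coset.

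\textbf{General approach.} The plan is to proceed case by case along Theorem~\ref{ap:mult}, combining three ingredients: the dimension formulas in Theorem~\ref{ap:mult}; the two concluding observations recorded before the theorem, which state that for $\chi$ of the form $\eta\circ \N_{E/F}$ the open intertwining periods $J_{x_{u_i},\chi,s}$ ($i=0,1$) are regular at $s=0$ except in the subcase $\chi=\triv$, where each has a simple pole; and the geometric lemma, which produces a one-dimensional subspace of $\Hom_H(I(\chi),\BC)$ consisting of closed intertwining periods attached to $u_2$ (provided $\chi$ is trivial on $M_0^{\theta_{u_2}}$, which is automatic under the distinction hypotheses).

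\textbf{Case analysis.} For case (a) ($\chi=\triv$, $\dim=2$), I will apply Proposition~\ref{prop y} to produce a regular nonzero combination $c_0 J_{x_{u_0},\triv,s}+c_1 J_{x_{u_1},\triv,s}$ at $s=0$; together with the closed intertwining period this will span $\Hom_H(I(\triv),\BC)$, linear independence being detected by restriction to sections supported on the open orbits via Lemma~\ref{lm open supp} and Proposition~\ref{prop support}. For cases (b) and (c) ($\chi=\eta\circ \N_{E/F}$ nontrivial), both open intertwining periods are regular at $s=0$ and, together with the closed one, provide three $H$-invariant linear forms; Theorem~\ref{ap:mult} asserts the total dimension is $3$ or $2$, and in either case the closed form is linearly independent from any combination of the open forms (since it vanishes on $V_{N_0}$ while the open ones do not), so the three candidates span $\Hom_H(I(\chi),\BC)$. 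The count itself is read off from the injection $\Hom_H(V_{N_0},\BC)\hookrightarrow \prod_{i=0,1}\Hom_{M_0^{\theta_{u_i}}}(\chi,\BC)$ given by the geometric lemma. For case (d) ($\chi|_{F^\times}=\triv$, $\chi^2\neq\triv$), a computation shows $M_0^{\theta_{u_i}}\simeq E^1$ for $i=0,1$, on which $\chi$ is nontrivial since $\chi$ does not factor through $\N_{E/F}$; hence no open orbit contributes and the single closed intertwining period exhausts the one-dimensional $\Hom_H(I(\chi),\BC)$.

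\textbf{Normalization and main obstacle.} For Conjecture~\ref{conj main}\eqref{conj b} I take constant normalizing factors: the scalars $c_0,c_1$ from Proposition~\ref{prop y} at the vertices $u_0,u_1$ in case (a), and $\mathfrak{n}\equiv 1$ at all other vertices and in all other cases. The principal obstacle will be the explicit bookkeeping, in each of the four cases, of the vanishing or nonvanishing of the spaces $\Hom_{M_0^{\theta_{u_i}}}(\chi,\BC)$ and of the linear independence among the resulting intertwining periods; both reduce to the finite computation of the fixed subgroups $M_0^{\theta_{u_i}}\subset E^\times$ for the three chosen representatives.
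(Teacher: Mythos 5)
Your proposal follows the paper's proof of Theorem \ref{SL_2} essentially step for step: the same reduction of the $\SL_1(D)$ case to Theorem \ref{thm compact}, the same case-by-case analysis along Theorem \ref{ap:mult} using the two Conclusions on the pole behavior of $J_{x_{u_0},\chi,s}$ and $J_{x_{u_1},\chi,s}$, the same use of Proposition \ref{prop y} when $\chi=\triv$, and the same choice of constant normalizing factors for Conjecture \ref{conj main}\eqref{conj b}.

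There is, however, one concrete misstep in your case analysis. In case (c) ($\chi=\chi_F\circ\N_{E/F}$ with $\chi_F^2\neq\triv$, where $\dim\Hom_H(I(\chi),\BC)=2$) you assert that the closed orbit $Bu_2H$ contributes and that the resulting closed intertwining period is linearly independent from the two open ones because it vanishes on $V_{N_0}$. Since the two open periods are themselves linearly independent (disjoint supports among the open cells), your argument would yield $\dim\Hom_H(I(\chi),\BC)\geq 3$, contradicting Theorem \ref{ap:mult}(3). The error is the parenthetical claim that triviality of $\chi$ on $M_0^{\theta_{u_2}}$ is ``automatic under the distinction hypotheses'': in case (c) the closed orbit does \emph{not} contribute, and $\Hom_H(I(\chi),\BC)$ is supported on the open double cosets, so the two open intertwining periods alone span it; this is exactly how the paper disposes of case (c), invoking Corollary \ref{cor open support cusp}. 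You do flag the computation of which orbits contribute as outstanding bookkeeping, so the fix is local, but as written the linear-independence claim ``in either case'' is false for case (c). Cases (a), (b) and (d) are handled as in the paper.
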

\begin{proof}
The statement when $H$ is conjugate to $\SL_1(D)$ follows at once from Corollary \ref{cor compact}. Now we suppose that $H$ is conjugate to $\SL_2(F)$. 
	\begin{enumerate}[(a)]
		\item If $\chi$ is trivial, then both $J_{x_{u_0},\chi,s}$ and $J_{x_{u_1},\chi,s}$ have a pole of order one at $s=0$. Hence by 
		Proposition \ref{prop y}, there exists $(c_0,c_1)\in \BC^2-\{0\}$ such that $c_0J_{x_{u_0},\chi,s}+c_1 J_{x_{u_1},\chi,s}$ is regular at $s=0$. Furthermore,
		$c_0J_{x_{u_0},\chi,0}+c_1 J_{x_{u_1},\chi,0}$  and $J_{x_{u_2},\chi,0}$ are linearly independent since $J_{x_{u_2},\chi,0}$ vanishes on $W_0+W_1$, where $W_i$ is defined as in the proof of Theorem \ref{thm: open supp}, whereas $J_{x_{u_i},\chi,0}$ vanishes on $W_j$ if $\{i,j\}=\{0,1\}$.
		 In this case $\Hom_{\SL_2(F)}(I(\chi),\mathbb{C})$ is generated by $c_0J_{x_{u_0},\chi,0}+c_1 J_{x_{u_1},\chi,0}$ and $J_{x_{u_2},\chi,0}$. Here the normalizing factors can be taken equal to $1$.
		\item If $\chi=\omega_{E'/F}\circ \N_{E/F}$, then $J_{x_{u_0},\chi,s}$ and $J_{x_{u_1},\chi,s}$ are holomorphic at $s=0$. Hence $\Hom^\ast_{\SL_2(F)}(I(\chi),\mathbb{C})$ is generated by $J_{x_{u_0},\chi,0}, \ J_{x_{u_1},\chi,0}$ and $J_{x_{u_2},\chi,0}$. Here the normalizing factors can be all taken equal to $1$ again.
		\item If $\chi=\chi_F\circ \N_{E/F}$ with $\chi_F^2
		\neq\mathbb{C}$, then $\Hom^\ast_{\SL_2(F)}(I(\chi),\mathbb{C})$ is generated by $J_{x_{u_0},\chi,0}$ and $\ J_{x_{u_1},\chi,0}$  either by the above discussion or by Corollary \ref{cor open support cusp}. Here the normalizing factors can be all taken equal to $1$ again.
		\item If $\chi|_{F^\times}=\mathbb{C}$ while $\chi^2\neq\mathbb{C}$, then
		$\Hom^\ast_{\SL_2(F)}(I(\chi),\mathbb{C})$ is generated by $J_{x_{u_2},\chi,0}$, and $1$ is an appropriate choice of normalizing factor.
	\end{enumerate}
	The result now follows from Theorem \ref{ap:mult}.
\end{proof}

\begin{rem}
There should be more meaningful choices of normalizing factors above.
\end{rem}

\subsubsection{The linear and twisted linear model} Let $G:=\SL_2(F)$, $B$ be the upper triangular Borel subgroup of $\SL_2(F)$, and $T$ the diagonal torus of $\SL_2(F)$. Let $E$ be quadratic extension of $F$ embedded as an $F$-subalgebra of $\CM_2(F)$. We recall that $E^1:=\{x\in E^\times,\N_{E/F}(x)=1\}$. We consider the tempered pairs $(G,H)$ with $H=T$ or $H=E^1$. We recall preliminary facts for $H=T$. 

\begin{itemize} 
\item There are two closed double cosets: $BuT=B$ and $Bu'T $, where $u:=I_2$ and $u':=\begin{pmatrix}
	0&1\\-1&0
\end{pmatrix} $. 
\item There are $|F^\times/(F^\times)^2|$ open double cosets $B u_{\epsilon} T$ where $u_{\epsilon}:=\begin{pmatrix}
	1&0\\ \epsilon&1
\end{pmatrix}$, for $\epsilon$ in a system of representatives $R(F^\times/(F^\times)^2)$. 
\end{itemize}

Let $\eta$ be a character of $T$ identified with a character of $F^\times$ as before. The main theorem of this section is the following. 

\begin{thm}\label{thm sl linear}
Conjecture \ref{conj main} holds for the pair $(G,H)$. 
Moreover when $H=T$, the space $\Hom_T(I(\eta),\BC)$ is nonzero if an only if $\eta(-1)=1$, in which case we have:
\begin{enumerate}
\item if $\eta=|\ |_F^{\pm 1}$, then $ \dim(\Hom_T(I(\eta),\BC))=|F^\times/(F^\times)^2|+1$;
\item if $\eta\neq |\ |_F^{\pm 1}$, then $\dim(\Hom_T(I(\eta),\BC))=|F^\times/(F^\times)^2|$. 
\end{enumerate}
\end{thm}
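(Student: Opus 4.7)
The plan is to handle the two pairs separately. For $H=E^1$, observe that $E^1$ is compact and $A_G=\{e\}$, so $(G,H)$ is unimodular by Lemma \ref{lm ex mod}\eqref{A}; moreover the character $\eta$ is cuspidal as a representation of the torus $T=M$. Hence Theorem \ref{thm compact} applies directly and yields Conjecture \ref{conj main} in this case, with trivial normalizing factors, since all nonzero intertwining periods are holomorphic at $s=0$.

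The substance lies in the linear model $H=T$. I would first carry out the orbit-stabilizer computation: for each open representative $u_\epsilon=\pmtrix{1}{0}{\epsilon}{1}$, a direct matrix calculation shows $T^{\theta_{u_\epsilon}}=\{\pm I\}=Z(G)$, so the open-orbit contribution through the geometric lemma of Section \ref{sec geom lemma} is $\Hom_Z(\eta,\BC)$, which is nonzero precisely when $\eta(-1)=1$. A parallel analysis of the two closed orbits via Mackey/Jacquet-module computations and Frobenius reciprocity yields the same vanishing condition, so $\Hom_T(I(\eta),\BC)=0$ whenever $\eta(-1)\neq 1$; from now on assume $\eta(-1)=1$.

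Next I would construct, for each open orbit, the open intertwining period $J_{x_{u_\epsilon},\eta,s}$ in the generalized sense of Remark \ref{rem non unimodular case} (the pair is tempered but not unimodular). Each is convergent on an open cone in the one-dimensional space $\mathfrak{a}^*_{T,\BC}(\theta_{u_\epsilon},-1)\cong\BC$, and the central task is its polar behavior at $s=0$. Following the template of Section \ref{sec SL}, I would use Proposition \ref{prop GL conj in SL} to lift to $\tilde{G}=\GL_2(F)$ and invoke the known $\GL_2$ linear period results of \cite{MatZ} and \cite{MatJFA} to show that $J_{x_{u_\epsilon},\eta,s}$ is holomorphic and nonzero at $s=0$ when $\eta\neq|\ |_F^{\pm 1}$, and has a simple pole at the reducibility points. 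Meanwhile, the closed intertwining periods are compact integrations, hence automatically regular; a Jacquet-module computation shows they produce a nonzero linear form exactly when $I(\eta)$ has a $T$-invariant closed-orbit subquotient, i.e.\ at $\eta=|\ |_F^{\pm 1}$, where they detect the trivial or Steinberg subquotient.

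Assembling the pieces yields both the dimension formula and the conjecture. In the generic case ($\eta\neq|\ |_F^{\pm 1}$, $\eta(-1)=1$), the $|F^\times/(F^\times)^2|$ holomorphic open intertwining periods are linearly independent since they are distinguished by their restrictions to sections supported on distinct open orbits, and exhaust $\Hom_T(I(\eta),\BC)$. At the reducibility points, the open periods all have simple poles; Propositions \ref{prop y} and \ref{prop support} then produce $|F^\times/(F^\times)^2|$ regularized combinations supported on closed orbits, and adjoining one independent compact closed intertwining period gives the extra dimension, totalling $|F^\times/(F^\times)^2|+1$. The main obstacle will be the precise polar analysis of the open intertwining periods at the reducibility points, together with the delicate bookkeeping needed to show that the regularized open periods plus the closed ones are linearly independent and span the full Hom space.
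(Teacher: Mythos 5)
Your treatment of $H=E^1$ via Theorem \ref{thm compact}, of the distinction criterion $\eta(-1)=1$ via the geometric lemma, and of the generic case (where $\Hom_T(I(\eta),\BC)$ is supported on open orbits and the $J_{x_{u_\epsilon},\eta,\underline 0}$ form a basis) all match the paper. But the two hard points are not actually carried out, and one of them is set up incorrectly. First, the polar analysis: you propose to lift to $\tilde G=\GL_2(F)$ via Proposition \ref{prop GL conj in SL} and quote \cite{MatJFA}, \cite{MatZ}; those results concern Galois periods and are what the paper uses for $(\SL_2(E),\SL_2(F))$, not for the linear model $(\SL_2(F),T)$. The paper instead computes each $J_{x_{u_\epsilon},\eta,s}$ bare-handedly, splitting the integral over $\mu_2\backslash T$ according to $|a^2\epsilon|_F\le 1$ or $>1$ and recognizing a sum of two Tate integrals, one with a (simple) pole exactly at $\eta=|\ |_F^{-1}$ and the other exactly at $\eta=|\ |_F$, both realized on the spherical section. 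This is the computation you defer as ``the main obstacle,'' and without it neither the regularity in the generic case nor the order-one poles at the reducibility points is established.

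Second, the count at $\eta=|\ |_F^{\pm1}$ is wrong as assembled. You cannot have $|F^\times/(F^\times)^2|$ linearly independent regularized forms ``supported on closed orbits'': by Proposition \ref{prop support} the regularizations $J^{*}_{x_{u_{\epsilon_i}},\eta}$ all factor through the closed part of the geometric-lemma filtration, which here has dimension at most $2$, and in fact (this is the content of Proposition \ref{prop y}) they are all proportional. The correct lower-bound family is the $r-1$ \emph{differences} $c_{i+1}J_{x_{u_{\epsilon_{i+1}}},\eta,s}-c_iJ_{x_{u_{\epsilon_i}},\eta,s}$, which are regular at $s=0$ and remain linearly independent precisely because they are still visible on the open orbits, together with the two closed-orbit periods, giving $\dim\ge r+1$. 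Finally, you give no argument for the upper bound: the geometric lemma only yields $\dim\le r+2$ when all orbits contribute, and the paper closes the gap by writing $I(\eta)$ as an extension of $\triv$ by $\St$ and proving $\dim\Hom_T(\St,\BC)\le|F^\times/(F^\times)^2|$, using the $F^\times/(F^\times)^2$-action $\overline t\cdot L=L\circ\diag(t,1)$ on $\Hom_T(\St,\BC)$ and the fact that $\dim\Hom_{\tilde T}(\tilde\St,\chi\otimes\chi^{-1})=1$ for quadratic $\chi$. This composition-series step is indispensable for the claimed equality $\dim=|F^\times/(F^\times)^2|+1$ and is absent from your proposal.
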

\begin{proof}
The statement when $H=E^1$ follows at once from Corollary \ref{cor compact}. Now we suppose $H=T$. The necessary and sufficient condition for distinction comes from the geometric lemma and we skip the very standard computations, together with the double coset decomposition of $G$. Now we explain the multiplicities, proving that $\Hom_T(I(\eta),\BC)=\Hom_T^*(I(\eta),\BC)$ at the same time. We assume that $\eta(-1)=1$.  

\begin{itemize}
   \item All open double cosets $Bu_{\epsilon}H$ always contribute to distinction. 
	\item Moreover if $\eta\neq |\ |_F^{\pm 1}$, the space $\Hom_T(I(\eta),\BC)$ is actually supported on open orbits, hence 
	\[\dim(\Hom_T(I(\eta),\BC))=|F^\times/(F^\times)^2|\] and the linear forms $J_{x_u,\eta,\underline{0}}$ form a basis of $\Hom_T(I(\eta),\mathbb{C})$. This already proves Conjecture \ref{conj main}, as the conjecture is for unitary $\eta$.
	\item Before treating the cases where $\eta=|\ |_F^{\pm 1}$, let us express the open intertwining periods in terms of Tate integrals. 
	For $f_s$ a holomorphic section of $I(\eta|\ |_F^s)$, by definition and for $\re(s)$ large enough: 
\[J_{x_{u_{\epsilon}},\eta,s}(f_s)=\int_{\mu_2\backslash T}f_s \begin{pmatrix}
	a &0\\ a \epsilon & a^{-1}
\end{pmatrix} d^\times a,\]
where $\mu_2$ is the center of $\SL_2(F)$. Observe that whenerver $a\neq 0$: \[\begin{pmatrix}
	a&0\\ a \epsilon &a^{-1}
\end{pmatrix}=\begin{pmatrix}
	a^{-1} \epsilon^{-1} & a \\0&  a \epsilon
\end{pmatrix}\begin{pmatrix}
	0&-1\\1& a^{-2}\epsilon^{-1} 
\end{pmatrix}.\]
Therefore 
\begin{equation*}
	\begin{split}
&J_{x_{u_{\epsilon}},\eta,s}(f_s)\\
&=\int_{|a^2\epsilon|_F\leq 1}f_s\left(\begin{pmatrix}
			a&0\\0&a^{-1}
		\end{pmatrix}\begin{pmatrix}
			1&0\\a^2\epsilon&1
		\end{pmatrix}\right) d^\times a +\int_{|a^2\epsilon|_F> 1}f_s \left(\begin{pmatrix}
	a^{-1} \epsilon^{-1} & a \\0&  a \epsilon
\end{pmatrix}\begin{pmatrix}
	0&-1\\1& a^{-2}\epsilon^{-1} 
\end{pmatrix} \right) d^\times a\\
&
=\int_{|a^2\epsilon|_F\leq 1}\eta(a)|a|_F^{s+1}f\begin{pmatrix}
			1&0\\a^2\epsilon&1
		\end{pmatrix} d^\times a + \eta(\epsilon)^{-1}|\epsilon|_F^{-s-1} \int_{|a^2\epsilon^{-1}|_F\leq1} \eta(a)|a|_F^{s-1} f\begin{pmatrix}
	0&-1\\1& a^2\epsilon^{-1} 
\end{pmatrix} d^\times a.
\end{split}\end{equation*}		
		We recognize the sum of two Tate integrals. The first one is holomorphic at $s=0$ except if $\eta=|\ |_F^{-1}$, in which case it has a pole of order at most one, which is realized by the spherical section. The second one is holomorphic at $s=0$ except if $\eta=|\ |_F$, in which case it has a pole of order at most one, which is also realized by the spherical section. 

	\item If $\eta=|\ |_F^{\pm 1}$, then all orbits contribute to the distinction of $I(\sigma)$. Now write $R(F^\times/(F^\times)^2)=\{\epsilon_1,\dots,\epsilon_r\}$. By the above discussion, each open intertwining period $J_{x_{u_{\epsilon_i}},\eta,s}$ has a pole of order one at $s=0$, hence by Proposition \ref{prop y} there exist nonzero scalars $c_1,\dots,c_r$ such that $c_{i+1}J_{x_{u_{\epsilon_{i+1}}},\eta,s}-c_iJ_{x_{u_{\epsilon_{i}}},\eta,s}$ is holomorphic at $s=0$ for $i=1,\dots,r-1$. But then, exactly as in the proof of (a) of Theorem \ref{SL_2}, the family 
	\[((c_{i+1}J_{x_{u_{\epsilon_{i+1}}},\eta,0}-c_iJ_{x_{u_{\epsilon_i}},\eta,0})_{i=1,\dots,r}, J_{x_u,\chi,0},J_{x_{u'},\chi,0})\] is linearly independent, hence \[\dim(\Hom_T(I(\eta),\BC))\geq |F^\times/(F^\times)^2|+1.\] It is now sufficient to prove that \[\dim(\Hom_T(I(\eta),\BC))\leq |F^\times/(F^\times)^2|+1.\] However $I(\eta)$ has length two, with composition factors the Steinberg $\mathrm{St}$ representation and the trivial representation. Hence it is sufficient to prove that \[\dim(\Hom_T(\mathrm{St},\BC))\leq |F^\times/(F^\times)^2|.\] We denote by 
	$\tilde{\mathrm{St}}$ the Steinberg representation of $\GL_2(F)$, and recall that its restriction to $\SL_2(F)$ is just $\mathrm{St}$. 
	Now, following \cite{ADP03} in the Galois case, we claim that $\Hom_T(\mathrm{St},\BC)$ is an $F^\times/(F^\times)^2$-module, where $\overline{t}\cdot L:=L\circ \diag(t,1)$, so it decomposes into $|F^\times/(F^\times)^2|$ weight spaces. However, denoting by $\tilde{T}$ the diagonal torus of $\GL_2(F)$, it is well-known that $\dim \Hom_{\tilde{T}}(\tilde{\mathrm{St}},\chi\otimes \chi^{-1})=1$ whenever $\chi$ is a quadratic character of $F^\times$. This implies that $\Hom_T(\mathrm{St},\BC)$ is the direct sum of $|F^\times/(F^\times)^2|$ one dimensional weight spaces, hence the result. 
	\end{itemize}
	\end{proof}

\subsection*{Acknowledgement.} We thank Patrick Delorme, Yangyu Fan and Yiannis Sakellaridis for directing us towards this question. We thank Erez Lapid for useful conversations, and Miyu Suzuki for useful comments on a previous version of this draft. Most importantly, we thank the anonymous referee for their very accurate  comments and
corrections, which lead to needed clarifications in several key places. The first named author was partially supported by NSFC 12301031.

	\bibliographystyle{alphanum}
	\bibliography{references}
	
\end{document}